\documentclass[12pt]{amsart}
\usepackage{amsmath}
\usepackage{amsfonts}
\usepackage{hyperref}
\usepackage{amssymb}
\usepackage{latexsym}
\usepackage{amsthm}
\usepackage{a4wide}


\newtheorem{theorem}{Theorem}
\newtheorem{lemma}[theorem]{Lemma}

\newtheorem{coro}[theorem]{Corollary}

\newtheorem{proposition}[theorem]{Proposition}

\newcounter{other}            
\newtheorem{otherth}[other]{Theorem}              
\newtheorem{otherl}[other]{Lemma}        

\theoremstyle{definition}
\newtheorem{definition}[theorem]{Definition}


\newcommand{\Cn}{\mathbb{C}^n}

\newcommand{\Sn}{\mathbb{S}_ n}

\newcommand{\Bn}{\mathbb{B}_ n}

\numberwithin{equation}{section}
\begin{document}

\title{Duality of holomorphic Hardy type tent spaces}

\author[Antti Per\"{a}l\"{a}]{Antti Per\"{a}l\"{a}}
\address{Antti Per\"{a}l\"{a} \\Departament de Matem\`{a}tiques i Inform\`{a}tica \\
Universitat de Barcelona\\
08007 Barcelona\\
Catalonia, Spain\\
Barcelona Graduate School of Mathematics (BGSMath).} \email{perala@ub.edu}


%
\subjclass[2010]{30H10, 30A36, 46A20, 42B35}

\keywords{duality, tent space, Hardy space, Bergman space, Bloch space, Carleson measure, BMOA, Bergman projection}

\thanks{The author acknowledges financial support from the Spanish Ministry of Economy and Competitiveness, through the Mar\'ia de Maeztu Programme for Units of Excellence in R\&D (MDM-2014-0445). The author was partially supported by the grant MTM2017-83499-P (Ministerio de Educaci\'on y Ciencia).}


\begin{abstract}
We study the holomorphic tent spaces $\mathcal{HT}^p_{q,\alpha}(\Bn)$, which are motivated by the area function description of the Hardy spaces on one hand, and the maximal function description of the Hardy spaces on the other. Characterizations for these spaces under general fractional differential operators are given.

We describe the dual of $\mathcal{HT}^p_{q,\alpha}(\Bn)$ for the full range $0<p,q<\infty$ and $\alpha>-n-1$. Here the case $1<p<\infty$, $q\leq 1$ leads us to the Hardy-Bloch type spaces $\mathcal{BT}^{p}(\Bn)$ and $\mathcal{BT}^{p,0}(\Bn)$, the latter being the predual of $\mathcal{HT}^p_{1,\alpha}(\Bn)$. In an analogous fashion, the case $p=1<q<\infty$ gives rise to Hardy-Carleson type spaces $\mathcal{CT}_{q,\alpha}(\Bn)$ and $\mathcal{CT}^0_{q,\alpha}(\Bn)$. In the remaining cases, the duality can be described in terms of the classical Bloch space $\mathcal{B}(\Bn)$. We also study these spaces in detail, providing characterizations in terms of fractional derivatives.

When $p,q\geq 1$, our approach to the question of duality is strongly related to the boundedness of a certain weighted Bergman projections acting on tent spaces. For the small exponents, we will also need to apply some other techniques, such as atomic decompositions and embedding theorems.

The work presented gives a unified approach for the classical Hardy and Bergman dualities, also involving the Bloch and BMOA spaces as well as some new spaces. We end the paper with discussion of some further questions and related topics.
\end{abstract}

\maketitle



\section{Introduction}

\noindent The concept of tent spaces, introduced by R. R. Coifman, Y. Meyer and E. M. Stein \cite{CMS}, arises from harmonic analysis. Related techniques have found application in holomorphic Hardy and Bergman spaces, in connection to Carleson measures, atomic decompositions and various operators acting between these spaces. We mention here \cite{AB, CV, FS, Jev, Lue1, Lue2, P1, PP, PR2015, Pel}. Motivated by the fact that the derivatives of Hardy space functions are completely characterized in terms of their membership in a certain tent space of holomorphic functions, it is indeed natural to study these spaces under the additional assumption of holomorphicity.

From the definition of a generic tent space $T^p_q$, it is not surprising that one has the duality $$(T^p_q)^*\sim T^{p'}_{q'},$$ where $1<p,q<\infty$ and $p'$ and $q'$ are the usual dual exponents. It is absolutely natural to expect such duality result to hold for the holomorphic tent spaces as well. On the other hand, it is well-known (see \cite{GK}, \cite{JPR}) that the analogous result can fail even for the holomorphic Lebesgue spaces (although a very natural duality result exists), and the weights need not be pathological, see also \cite{Hed}. Therefore, it is not completely obvious, what one should expect, even though the end-result might not come as a surprise. Also, it is not clear what to expect when $p,q\leq 1$; typically holomorphicity guarantees rich structure of a dual space, even for these small exponents. The purpose of the present work is to study these questions.

Our first main result is Theorem \ref{duality}, which describes the duality for holomorphic Hardy type tent spaces $\mathcal{HT}^p_{q,\alpha}(\Bn)$, when $1<p<\infty$, effectively obtaining the expected duality
$$(\mathcal{HT}^p_{q,\alpha}(\Bn))^*\sim \mathcal{HT}^{p'}_{q',\alpha}(\Bn),$$
when $1<p,q<\infty$. The case $q\leq 1$ leads to holomorphic Hardy-Bloch type spaces $\mathcal{BT}^p(\Bn)$ that seem do not seem to have appeared in the literature before. We establish
$$(\mathcal{HT}^p_{q,\alpha}(\Bn))^*\sim \mathcal{BT}^{p'}(\Bn).$$
The case $q<1$ is more demanding and is therefore treated separately in Theorem \ref{duality3}. In this case the dual pairing will change in a manner analogous to \cite{Bla}, \cite{PR}, \cite{Sha}, \cite{ZhuSmall}, for instance. 

The case $0<p\leq 1$ gives rise to so-called Hardy-Carleson type spaces $\mathcal{CT}_{q,\alpha}(\Bn)$. This nomenclature is motivated by the fact that these spaces are defined in terms of Carleson measures, in a fashion similar to the way the classical $BMOA(\Bn)$ can be defined in terms of Carleson measures.

Some key points in the proof are contained in Propositions \ref{projection} and \ref{projection2}, which are interesting in their own right. These results establish the boundedness of a certain Bergman type projection on $T^p_{q,\alpha}(\Bn)$. This projection arises from the dual pairing of the measurable tent spaces, and we extend some well-known properties to the tent space case.

Along the way to towards the desired duality result, it well be well-motivated to study the spaces $\mathcal{HT}^p_{q,\alpha}(\Bn)$ and $\mathcal{BT}^p(\Bn)$ is detail. Theorems \ref{HTchar} and \ref{BTchar} characterize these spaces in terms of the fractional derivatives $R^{s,t}$ (see \cite{ZZ, ZhuSmall, ZhuBn}). These theorems are also used to obtain an approximation result in Lemma \ref{approx}, which will be needed to complete the duality. For the case $q<1$, we will also apply atomic decomposition techniques in Lemmas \ref{seq} and \ref{converse}.

The complete description of duality is expressed via the Table \ref{table} below. For a reader, who is familiar with the standard Hardy and Bergman dualities only when $p\geq 1$, it might come as a surprise that when $0<p<1$, the dual of both Hardy and Bergman spaces can be identified with the Bloch space, see \cite{ZhuSmall}. A similar phenomenon (or an extension of this result) occurs here; when $p<1$, the dual space can always be identified with $\mathcal{B}(\Bn)$.

\begin{table}[h]\label{table}
\begin{center}
\begin{tabular}{|c||c|c|c|}
\hline
$(\mathcal{HT}^p_{q,\alpha}(\Bn))^*\sim$& $0<p<1$ &$p=1$ &$1<p<\infty$ \\
\hline
\hline
$0<q\leq 1$& $\mathcal{B}(\Bn)$ &$\mathcal{B}(\Bn)$ &$\mathcal{BT}^{p'}(\Bn)$ \\
\hline
$1<q<\infty$&$\mathcal{B}(\Bn)$ &$\mathcal{CT}_{q',\alpha}(\Bn)$ & $\mathcal{HT}^{p'}_{q',\alpha}(\Bn)$ \\
\hline
\end{tabular}
\caption{Duality results summarized}
\end{center}
\end{table}

There are also the natural "little-O" versions of the spaces $\mathcal{BT}^p(\Bn)$ and $\mathcal{CT}_{q,\alpha}(\Bn)$. These will be denoted by $\mathcal{BT}^{p,0}(\Bn)$ and $\mathcal{CT}^0_{q,\alpha}(\Bn)$, respectively. For $1<p<\infty$, we obtain the duality
$$(\mathcal{BT}^{p,0}(\Bn))^*\sim \mathcal{HT}^{p'}_{1,\alpha}(\Bn).$$
This our Theorem \ref{duality2}. For $1<q<\infty$, we obtain
$$\mathcal{CT}^0_{q,\alpha}(\Bn)\asymp \mathcal{HT}^1_{q',\alpha}(\Bn).$$
The case of $\mathcal{CT}^0_{q,\alpha}(\Bn)$ can be treated by using vanishing Carleson measures, once the main duality result is obtained. However, the definition of the space $\mathcal{BT}^{p,0}(\Bn)$ seems to be more involving, and we present the full proof of the duality in its own section.

This work can be considered as a unified treatment of duality results that contains also the classical Hardy, Hardy-BMOA, as well as the Bergman and Bergman-Bloch dualities. The end-point spaces $\mathcal{BT}^p(\Bn)$ and $\mathcal{CT}_{q,\alpha}(\Bn)$ do not seem to have appeared in the literature in this context and generality. In the author's opinion, they clarify the picture of the classical dualities in very natural way.

Having the boundedness of the Bergman projection opens possibilities for a rather straightforward study of many operators. These include the corresponding Toeplitz and Hankel operators, where the generating symbol can be assumed to be essentially bounded. This, for instance, makes it natural to study the spectral properties of Toeplitz operators acting on $\mathcal{HT}^p_{q,\alpha}(\Bn)$ in a manner similar to the fundamental results of U. Venugopalkrisna, L. A. Coburn and L. Boutet de Monvel, \cite{BdM, Cob, Ven}. Let us also mention the fairly recent joint paper with A. B\"ottcher \cite{BP}.

Aside from these topics, we will end up with some open questions. These will be discussed in the last section of the paper. The author would like to remark that there some result, most notably Lemmas \ref{seq}, \ref{converse} and \ref{embed2} that, although proven here, are most likely well-known. The author claims no originality of these results, but decided to add proofs due the difficulty of finding them in the literature, at least in the generality considered here.\\

\subsection{Notation and conventions}

We will throughout use the following conventions. Given a quasinormed space $X$ and $x \in X$, the quasinorm of $x$ on $X$ is denoted by $\|x\|_X$. If $T:X\to Y$ is a linear operator between two quasinormed spaces $X$ and $Y$, then its operator quasinorm is denoted by $\|T\|_{X\to Y}$. The space of continuous linear functionals $X\to \mathbb{C}$ -- or the dual of $X$ -- is denoted by $X^*$. We will also write $X\sim Y$, to express that the spaces are isomorphic.

We will be studying several function spaces with some $p$-integrability condition. Given $p \in [1,\infty]$, we will always denote by $p'$ its dual exponent: $p'=p/(p-1)$. We understand $1'=\infty$ and $\infty'=1$.

By $A \lesssim B$, we mean that there exists $C>0$ so that $A \leq CB$. The relation $A \gtrsim B$ is defined analogously. If both $A \lesssim B$ and $A \gtrsim B$, we write $A\asymp B$.\\

\subsection*{Acknowledments}
The author wishes to thank Jordi Pau for several inspiring discussions and showing him the power of tent spaces.\\

\section{Preliminaries}

\noindent Denote by $\Bn=\{z\in \mathbb{C}^n:|z|<1\}$ the open unit ball in $\Cn$, the Euclidean space of complex dimension $n$. For any two points $z=(z_ 1,\dots,z_ n)$ and $w=(w_ 1,\dots,w_ n)$ in $\Cn$ we write
\[
\langle z,w\rangle =z_ 1\bar{w}_ 1+\dots +z_ n \bar{w}_ n,
\]
 and
\[
 |z|=\sqrt{\langle z,z\rangle}=\sqrt{|z_ 1|^2+\dots +|z_ n|^2}.
\]

Let $\beta \in \mathbb{R}$ and $0<p<\infty$. The Lebesgue space $L^p_\beta(\Bn)$ consists of measurable $f$ on $\Bn$ with
$$\|f\|_{L^p_\beta(\Bn)}^p=\int_{\Bn} |f(z)|^p (1-|z|^2)^\beta dV_n(z) < \infty,$$
where $dV_n(z)$ is the $2n$-dimensional Lebesgue measure on $\Bn$, which is normalized so that $V_n(\Bn)=1$. The space $L^\infty(\Bn)$ consists of the essentially bounded measurable functions $f$, and the norm used will be
$$\|f\|_{L^\infty(\Bn)}=\operatorname{ess}\sup_{z \in \Bn}|f(z)|.$$

\subsection{Hardy, Bergman, BMOA and Bloch spaces}

For $0<p<\infty$, the Hardy space $H^p(\Bn)$ consists of those holomorphic functions $f:\Bn\to \mathbb{C}$ with
 \[ \|f\|_{H^p(\Bn)}^p=\sup_{0<r<1}\int_{\Sn} \!\! |f(r\zeta)|^p \,d\sigma(\zeta)<\infty,\]
where $d\sigma$ is the surface measure on the unit sphere $\Sn:=\partial \Bn$ normalized so that $\sigma(\Sn)=1$. By $H^\infty(\Bn)$ we mean the space of bounded holomorphic functions on $\Bn$.

A function $f$ in $H^p(\Bn)$ has radial limits $f(\zeta)=\lim_{r\to 1^{-}} f(r\zeta)$ for almost every $\zeta \in \Sn$; and $H^2(\Bn)$ becomes a Hilbert space when endowed with the inner product

$$\langle f,g\rangle_{-1}= \int_{\Sn} f(\zeta)\overline{g(\zeta)}d\sigma(\zeta).$$

It is well-known that if $1<p<\infty$, then $H^p(\Bn)^*\sim H^{p'}(\Bn)$. The dual of $H^1(\Bn)$ is known as the space $BMOA(\Bn)$; this fact will be discussed in more detail later on.

Let $\beta>-1$ and $0<p<\infty$. The Bergman space $A^p_\beta(\Bn)$ consists of those holomorphic functions $f$ in $\Bn$, for which
$$\|f\|_{A^p_\beta(\Bn)}^p=\int_{\Bn} |f(z)|^p (1-|z|^2)^\beta dV_n(z) < \infty.$$
The space $A^2_\beta(\Bn)$ is a Hilbert space under the inner product
$$\langle f,g\rangle_{\beta}=\int_{\Bn} f(z)\overline{g(z)}(1-|z|^2)^\beta dV_n(z).$$

The orthogonal projection $P_\beta:L^2_\beta(\Bn)\to A^2_\beta(\Bn)$ is called the Bergman projection, and it is given by the formula
$$P_\beta f(z)=c(n,\beta)\int_{\Bn}\frac{(1-|u|^2)^\beta}{(1-\langle z,u\rangle)^{1+n+\beta}} f(u)dV_n(u).$$
Here $$c(n,\beta)=\frac{\Gamma(n+\beta+1)}{n!\Gamma(\beta+1)}$$ is the constant that makes $(1-|z|^2)^\beta dV_n(z)$ a probability measure. It is well-known that $P_\beta:L^p_\beta(\Bn)\to A^p_\beta(\Bn)$ is bounded if and only if $1<p<\infty$. 

We will also need the Bloch space $\mathcal{B}(\Bn)$, which consists of those holomorphic functions $f$ on $\Bn$, for which 
$$\|f\|_{\mathcal{B}(\Bn)}=\sup_{z \in \Bn} (1-|z|^2)|\nabla f(z)|+|f(0)|<\infty,$$
where $$\nabla f=(\partial_{z_1}f, \partial_{z_2}f,\dots, \partial_{z_n}f)$$ is the complex gradient of $f$. It is known that $P_\beta(L^\infty(\Bn))=\mathcal{B}(\Bn)$. Moreover, if $1<p<\infty$, then $A^p_\beta(\Bn)^*\sim A^{p'}_\beta(\Bn)$, whereas the dual of $A^1_\beta(\Bn)$ can be identified as the Bloch space.

We refer to the books \cite{Rud} and \cite{ZhuBn} for the theory of Hardy, Bergman and Bloch spaces of the unit ball. For Hardy spaces of the unit disk, see \cite{duren1}.\\

\subsection{Fractional differential operators}

We will be using a family of fractional derivative and integral operators. Suppose that $s$ and $t$ are real parameters such that neither $n+s$ nor $n+s+t$ is a negative integer. Then the fractional differential operator $R^{s,t}$ is the unique linear operator, which is continuous on the space of holomorphic functions on the ball (equipped with the topology of uniform convergence on compact sets) satisfying
$$R^{s,t}\frac{1}{(1-\langle z,u\rangle)^{n+1+s}}=\frac{1}{(1-\langle z,u\rangle)^{n+1+s+t}}$$
In a similar way, we define the fractional integration operator $R_{s,t}$ by the relation
$$R_{s,t}\frac{1}{(1-\langle z,u\rangle)^{n+1+s+t}}=\frac{1}{(1-\langle z,u\rangle)^{n+1+s}}.$$
It clearly holds that 
$$R^{s,t}R_{s,t}=R_{s,t}R^{s,t}=Id.$$

These fractional operators are particularly useful in the following situation. Suppose that  the holomorphic $f$ has the integral representation
$$f(z)=\int_{\Bn}\frac{d\mu(u)}{(1-\langle z,u\rangle)^{n+1+s}}.$$
Then 
$$R^{s,t}f(z)=\int_{\Bn}\frac{d\mu(u)}{(1-\langle z,u\rangle)^{n+1+s+t}}.$$

If $s>-1$, $t>0$ and $f \in A^1_s(\Bn)$, then
\begin{equation}\label{FracDer}
R^{s,t}=c(n,s)\int_{\Bn}\frac{(1-|u|^2)^s}{(1-\langle z,u\rangle)^{1+n+s+t}}f(u)dV_n(u).
\end{equation}
In a similar fashion, if $t>0$, $s+t>-1$ and $f \in A^1_{s+t}(\Bn)$, we can write
\begin{equation}\label{FracInt}
R_{s,t}f(z)=c(n,s+t)\int_{\Bn}\frac{(1-|u|^2)^{s+t}}{(1-\langle z,u\rangle)^{1+n+s}}f(u)dV_n(u).
\end{equation}

The next lemma will be needed to carry out some calculations, when the parameters $s$ and $t$ do not match those of the kernel.

\begin{otherl}\label{change}
Suppose that neither $n+s$ nor $n+s+t$ is a negative integer, and that $N$ is a positive integer. Then, there exist one variable polynomials $\phi$ and $\psi$ of degree $N$ so that
\begin{equation}\label{pol1}
R^{s,t}\frac{1}{(1-\langle z,u\rangle)^{n+1+s+N}}=\frac{\phi(\langle z,u\rangle)}{(1-\langle z,u\rangle)^{n+1+s+N+t}}
\end{equation}
and
\begin{equation}\label{pol2}
R_{s,t}\frac{1}{(1-\langle z,u\rangle)^{n+1+s+N+t}}=\frac{\psi(\langle z,u\rangle)}{(1-\langle z,u\rangle)^{n+1+s+N}}.
\end{equation}
\end{otherl}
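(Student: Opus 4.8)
Set $a=n+1+s$ and, for fixed $u\in\Bn$, write $x=\langle z,u\rangle$, a holomorphic function of $z$ with $|x|<1$; throughout, $(c)_k=c(c+1)\cdots(c+k-1)$ denotes the rising factorial. The plan is to reduce both identities to the way $R^{s,t}$ and $R_{s,t}$ act on homogeneous Taylor coefficients, and then to a single classical hypergeometric transformation. First I would recall from \cite{ZhuBn} the power-series description of these operators: writing a holomorphic $f=\sum_k f_k$ in terms of its homogeneous expansion, $R^{s,t}$ acts as multiplication by the scalar $\lambda_k=(n+1+s+t)_k/(n+1+s)_k=(a+t)_k/(a)_k$ on the degree-$k$ piece $f_k$, while $R_{s,t}$ acts as multiplication by $1/\lambda_k$. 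One can extract this from the definition given here: $R^{s,t}$ preserves homogeneity degree and is continuous, so comparing the degree-$k$ homogeneous parts in $R^{s,t}(1-x)^{-a}=(1-x)^{-(a+t)}$ together with $(1-x)^{-a}=\sum_k\frac{(a)_k}{k!}x^k$ forces $R^{s,t}\langle z,u\rangle^k=\lambda_k\langle z,u\rangle^k$, and, as $u$ ranges over $\Bn$, the functions $\langle z,u\rangle^k$ span all homogeneous polynomials of degree $k$, which fixes the multiplier on all of $\mathrm{Hol}(\Bn)$.

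Since $(1-x)^{-(a+N)}=\sum_k\frac{(a+N)_k}{k!}x^k$ converges uniformly on compact subsets of $\Bn$ and $R^{s,t}$ is continuous, I would then apply it termwise:
\[
R^{s,t}\frac{1}{(1-x)^{a+N}}=\sum_{k=0}^\infty \lambda_k\,\frac{(a+N)_k}{k!}\,x^k=\sum_{k=0}^\infty \frac{(a+t)_k\,(a+N)_k}{(a)_k\,k!}\,x^k={}_2F_1(a+t,\,a+N;\,a;\,x).
\]
Euler's transformation ${}_2F_1(\alpha,\beta;\gamma;x)=(1-x)^{\gamma-\alpha-\beta}{}_2F_1(\gamma-\alpha,\gamma-\beta;\gamma;x)$ then gives ${}_2F_1(a+t,a+N;a;x)=(1-x)^{-(a+N+t)}{}_2F_1(-t,-N;a;x)$, and because $(-N)_k=0$ for $k>N$ the series ${}_2F_1(-t,-N;a;x)$ terminates, i.e. it is a polynomial $\phi(x)$ of degree at most $N$; this is exactly \eqref{pol1}. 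Running the same computation for $R_{s,t}$ with multiplier $1/\lambda_k$ produces ${}_2F_1(a,\,a+N+t;\,a+t;\,x)$, and Euler's transformation turns it into $(1-x)^{-(a+N)}{}_2F_1(t,-N;a+t;x)$, whose numerator $\psi(x)={}_2F_1(t,-N;a+t;x)$ again terminates, giving \eqref{pol2}. A fully elementary alternative, avoiding named hypergeometric identities, is to induct on $N$: the radial derivative $\mathcal{R}=\sum_{j=1}^n z_j\partial_{z_j}$ commutes with $R^{s,t}$ (both are diagonal in the homogeneous decomposition), satisfies $\mathcal{R}(1-x)^{-b}=b\,x\,(1-x)^{-(b+1)}$, and acts as $x\frac{d}{dx}$ on functions of $x$; combining this with the algebraic identity $x(1-x)^{-(a+N)}=(1-x)^{-(a+N)}-(1-x)^{-(a+N-1)}$ passes from the case $N-1$ to the case $N$ while keeping the numerator a polynomial of degree at most $N$.

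The genuinely delicate points are two. The first is pinning down the scalar multiplier $\lambda_k$, that is, the fact that $R^{s,t}$ is diagonalized by the homogeneous decomposition; this is where I would lean on the standard description in \cite{ZhuBn} rather than on the bare uniqueness statement, and it is also what makes the commutation $\mathcal{R}R^{s,t}=R^{s,t}\mathcal{R}$ (and hence the inductive route) transparent. The second is the precise degree of $\phi$ and $\psi$: their leading coefficients equal, up to the factors $(a)_N$ and $(a+t)_N$, the quantities $t(t-1)\cdots(t-N+1)$ and $t(t+1)\cdots(t+N-1)$. The factors $(a)_N$ and $(a+t)_N$ are nonzero precisely because $n+s$ and $n+s+t$ are not negative integers — which is exactly where the hypothesis of the lemma is used — so "degree $N$" should be read as "degree at most $N$, with equality off an exceptional finite set of integer values of $t$." This distinction is harmless for the later applications, where only the stated factored form of the output is needed.
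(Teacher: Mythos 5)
Your proposal cannot be checked against an in-paper argument, because the paper never proves this statement: it is Lemma A, one of the results quoted from the literature (the surrounding text explicitly refers the reader to \cite{ZZ} and \cite{ZhuBn} for the treatment of $R^{s,t}$ and $R_{s,t}$). What you have written is therefore a self-contained proof of a result the paper outsources, and it is correct. Your reduction to the coefficient-multiplier description $\lambda_k=(n+1+s+t)_k/(n+1+s)_k$ is exactly the standard description of $R^{s,t}$ in the cited sources, and you are right to flag that this step cannot be squeezed out of the bare uniqueness definition without first knowing that $R^{s,t}$ preserves homogeneous degree; leaning on \cite{ZhuBn} there is the honest move, and it is consistent with how the paper itself treats these operators. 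From the multiplier description, the identification of $R^{s,t}(1-x)^{-(a+N)}$ with ${}_2F_1(a+t,a+N;a;x)$ and Euler's transformation (legitimate here since $a=n+1+s$ and $a+t=n+1+s+t$ are not non-positive integers -- precisely the hypothesis of the lemma) yield the terminating series $\phi={}_2F_1(-t,-N;a;\,\cdot\,)$ and $\psi={}_2F_1(t,-N;a+t;\,\cdot\,)$, which is the cleanest possible form of the conclusion; the inductive alternative via the radial derivative and the identity $x(1-x)^{-(a+N)}=(1-x)^{-(a+N)}-(1-x)^{-(a+N-1)}$ is a nice elementary fallback (note the division by $a+N-1$ it requires is again protected by the hypothesis that $n+s$ is not a negative integer). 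Your caveat about the degree is also correct and worth recording: for $t\in\{0,1,\dots,N-1\}$ the leading coefficient of $\phi$ vanishes, and for $t\in\{0,-1,\dots,-(N-1)\}$ that of $\psi$ does, so ``degree $N$'' in the statement should be read as ``degree at most $N$''. This discrepancy is immaterial for the paper, which, as it states immediately after the lemma, only ever uses $\phi$ and $\psi$ through bounds by their $L^\infty(\Bn)$ norms.
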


This lemma will most often be used in a form, where the polynomials are estimated from above by their $L^\infty(\Bn)$ norms.

We refer an interested reader to the monograph of R. Zhao and K. Zhu \cite{ZZ} and the textbook of Zhu \cite{ZhuBn} for detailed treatment of these fractional derivatives.\\

\subsection{Tent spaces}

For $\zeta \in \Sn$ and $\gamma>1$, recall that the admissible Kor\'anyi approach region $\Gamma_{\gamma}(\zeta)$ is defined as
\begin{displaymath}
\Gamma(\zeta)=\Gamma_{\gamma}(\zeta)=\left \{z\in \Bn: |1-\langle z,\zeta\rangle |<\frac{\gamma}{2} (1-|z|^2) \right \}.
\end{displaymath}

As a consequence of Lemma \ref{Gamma} below, it follows that for our purposes, the choice of $\gamma>1$ is not important. Therefore, as is customary, we will suppress it from the notation.

We set $I(0)=\Sn$. If $z\neq 0$, then $I(z)=\{\zeta \in \Sn: z\in \Gamma(\zeta)\}$. It can be shown that $\sigma(I(z))\asymp (1-|z|^2)^{n}$, and it follows from Fubini's theorem that, for a positive function $\varphi$, and a finite positive measure $\nu$, one has
\begin{equation}\label{EqG}
\int_{\Bn} \varphi(z)\,d\nu(z)\asymp \int_{\Sn} \left (\int_{\Gamma(\zeta)} \varphi(z) \frac{d\nu(z)}{(1-|z|^2)^{n}} \right )d\sigma(\zeta).
\end{equation}

This fact will be crucial for the analysis of tent spaces.

Let $0<p,q<\infty$ and $\alpha>-n-1$. The weighted tent space $T^p_{q,\alpha}(\Bn)$ consists of those measurable $f$ on $\Bn$ with
$$\|f\|_{T^p_{q,\alpha}(\Bn)}^p=\int_{\Sn}\left(\int_{\Gamma(\zeta)}|f(z)|^q(1-|z|^2)^\alpha dV_n(z)\right)^{p/q}d\sigma(\zeta).$$ In addition, $T^p_\infty(\Bn)$ consists of measurable functions $f$ with
$$\|f\|_{T^p_\infty(\Bn)}^p=\int_{\Sn}\left(\operatorname{ess} \sup_{z \in \Gamma(\zeta)} |f(z)|\right)^{p}d\sigma(\zeta)<\infty.$$

Note that this definition of $T^p_\infty(\Bn)$ differs from the original one in \cite{CMS}, but has since been used in the present context, see \cite{Lue1, Lue2, Pel}. In order to avoid some cumbersome statements, we understand that $T^p_{\infty,\alpha}(\Bn)=T^p_\infty(\Bn)$.

The quantity $\|\cdot\|_{T^p_{q,\alpha}(\Bn)}$ is obviously a norm if $p,q\geq 1$. Moreover, if either $p<1$ or $q<1$ (or both), the expression
$$\|f-g\|_{T^p_{q,\alpha}(\Bn)}^{\min(p,q)}$$
is a metric on $T^p_{q,\alpha}(\Bn)$. It is clear that if $p_1\leq p_2$ and $q_1\leq q_2$, we have
$$T^{p_2}_{q_2,\alpha}(\Bn)\subset T^{p_1}_{q_1,\alpha}(\Bn)$$
with continuous inclusion.

For non-zero $u \in \Bn$, we define $\zeta_u=u/|u|$ and set
$$Q(u)=\{ z\in \Bn: |1-\langle z,\zeta_u\rangle|<1-|u|^2\}.$$ We agree that $Q(0)=\Bn$.
The space $T^\infty_{q,\alpha}(\Bn)$ consists of measurable functions $f$ with
$$\|f\|_{T^\infty_{q,\alpha}(\Bn)}=\operatorname{ess}\sup_{\zeta \in \Sn} \left(\sup_{u \in \Gamma(\zeta)}\frac{1}{(1-|u|^2)^n}\int_{Q(u)}|f(z)|^q (1-|z|^2)^{n+\alpha}dV_n(z)\right)^{1/q}.$$
By comparing with discussion in Section 5.2 of \cite{ZhuBn}, we notice that $f \in T^\infty_{q,\alpha}(\Bn)$ if and only if $(1-|\cdot|^2)^{n+\alpha}|f|^q$ is a Carleson measure on $\Bn$. Motivated by this fact, we recall a characterization for Carleson measures, see for instance Theorem 45 of \cite{ZZ}.

\begin{otherl}\label{CM}
Let $\mu$ be a positive Borel measure. Then $\mu$ is a Carleson measure if and only if
\begin{equation}\label{CMsup}
\sup_{a \in \Bn} \int_{\Bn}\frac{(1-|a|^2)^T}{|1-\langle z,a\rangle|^{n+T}}d\mu(z)<\infty
\end{equation}
for some, or equivalently all, $T>0$.
\end{otherl}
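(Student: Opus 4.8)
The plan is to reduce \eqref{CMsup} to the standard non-isotropic Carleson box condition. Writing $Q(\zeta,\delta)=\{z\in\Bn:|1-\langle z,\zeta\rangle|<\delta\}$ for $\zeta\in\Sn$ and $\delta>0$ (so that the sets $Q(u)$ of the preliminaries are $Q(\zeta_u,1-|u|^2)$), I take as the definition of a Carleson measure the requirement $\mu(Q(\zeta,\delta))\lesssim\delta^n$, uniformly in $\zeta$ and $\delta$. The bridge between \eqref{CMsup} and this geometric condition is the elementary comparison $|1-\langle z,a\rangle|\asymp(1-|a|^2)+|1-\langle z,\zeta_a\rangle|$, valid for $z\in\Bn$ and nonzero $a\in\Bn$; I would establish it by expanding $1-\langle z,a\rangle=(1-|a|)+|a|(1-\langle z,\zeta_a\rangle)$ and using that $\Re(1-\langle z,\zeta_a\rangle)\ge 0$, together with $1-|a|\asymp 1-|a|^2$.

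For the implication that \eqref{CMsup} for a single $T>0$ forces $\mu$ to be Carleson, I would fix $\zeta\in\Sn$, $\delta\in(0,1)$ and test the integral at $a=(1-\delta)\zeta$, so that $\zeta_a=\zeta$ and $1-|a|^2\asymp\delta$. For $z\in Q(\zeta,\delta)$ the comparison gives $|1-\langle z,a\rangle|\lesssim\delta$, whence the integrand in \eqref{CMsup} is $\gtrsim\delta^{-n}$ throughout $Q(\zeta,\delta)$; restricting the integral to this box and invoking its finiteness yields $\delta^{-n}\mu(Q(\zeta,\delta))\lesssim 1$, which is exactly the box condition.

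For the converse, assume $\mu$ is Carleson, fix $T>0$ and $a\in\Bn$, and set $\delta=1-|a|^2$. I would split $\Bn$ into the dyadic shells $E_0=Q(\zeta_a,\delta)$ and $E_k=\{z:2^{k-1}\delta\le|1-\langle z,\zeta_a\rangle|<2^k\delta\}$ for $k\ge 1$; since $|1-\langle z,\zeta_a\rangle|\le 2$ on $\Bn$, only finitely many shells are nonempty and they cover $\Bn$. On $E_k$ the comparison gives $|1-\langle z,a\rangle|\gtrsim 2^k\delta$, while the box condition gives $\mu(E_k)\le\mu(Q(\zeta_a,2^k\delta))\lesssim(2^k\delta)^n$, so the contribution of $E_k$ to the integral is $\lesssim\delta^T(2^k\delta)^{-(n+T)}(2^k\delta)^n=2^{-kT}$. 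Summing the geometric series $\sum_{k\ge 0}2^{-kT}<\infty$, which converges precisely because $T>0$, produces a bound uniform in $a$. This proves \eqref{CMsup} for every $T>0$ and, combined with the previous paragraph, settles the ``for some, or equivalently all'' formulation.

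The argument is essentially routine and I anticipate no serious obstacle; the only delicate points are the uniform constants in the comparison $|1-\langle z,a\rangle|\asymp(1-|a|^2)+|1-\langle z,\zeta_a\rangle|$ (one should separately dispatch the trivial regime $|a|\le 1/2$, where all quantities are comparable to constants) and the bookkeeping ensuring that the shells $E_k$ exhaust $\Bn$ and that the Carleson bound is applied only to admissible box sizes, with the largest shells absorbed into the finiteness of $\mu(\Bn)$. These are standard features of the non-isotropic geometry on $\Bn$, so the verification amounts to careful estimation rather than a genuinely new idea.
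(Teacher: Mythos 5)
Your proof is correct. Note, however, that the paper does not prove this lemma at all: it is stated as a quoted result (Lemma B), with a pointer to Theorem 45 of the Zhao--Zhu memoir \cite{ZZ}, so there is no internal proof to compare against. Your argument is the standard one found in such references: the two-sided comparison $|1-\langle z,a\rangle|\asymp(1-|a|^2)+|1-\langle z,\zeta_a\rangle|$ (with the real-part argument giving the nontrivial lower bound, and the regime $|a|\le 1/2$ treated separately), testing at $a=(1-\delta)\zeta$ to recover the box condition $\mu(Q(\zeta,\delta))\lesssim\delta^n$ from \eqref{CMsup} for a single $T$, and the dyadic-shell decomposition with the geometric series $\sum_k 2^{-kT}$ to go back for every $T>0$; the chain (some $T$) $\Rightarrow$ (box condition) $\Rightarrow$ (all $T$) correctly yields the ``some, or equivalently all'' formulation. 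The only bookkeeping points --- finiteness of $\mu(\Bn)$ (immediate from \eqref{CMsup} since the integrand is bounded below for fixed $a$, and from the box condition at $\delta\asymp 1$ in the converse) and the saturation of the boxes at $|1-\langle z,\zeta\rangle|<2$ --- are exactly the ones you flag, and they are handled as you indicate.
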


It follows that if $\|\mu\|_{CM_T(\Bn)}$ denotes the supremum in \eqref{CMsup}, and $$d\mu(z)=(1-|z|^2)^{n+\alpha}|f(z)|^q dV_n(z),$$ then
$$\|\mu\|_{CM_T(\Bn)}^{1/q}\asymp \|f\|_{T^\infty_{q,\alpha}(\Bn)}.$$

The following result is the standard duality theorem for measurable tent spaces in a slightly less general form that will suit our purposes. See \cite{CMS, Lue1}, and also \cite{Pel}.

\begin{otherth}\label{dual}
Let $\alpha>-n-1$, $1\leq p<\infty$, $1\leq q<\infty$ and $p+q\neq 2$. Then, under the pairing
$$\langle f,g\rangle_{n+\alpha}=\int_{\Bn}f(z)\overline{g(z)}(1-|z|^2)^{n+\alpha} dV_n(z)$$
the dual of $T^p_{q,\alpha}(\Bn)$ is $T^{p'}_{q',\alpha}(\Bn)$ with equivalent norms.
\end{otherth}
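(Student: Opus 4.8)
The plan is to prove the two inclusions separately, the nontrivial one being $(T^p_{q,\alpha}(\Bn))^*\hookrightarrow T^{p'}_{q',\alpha}(\Bn)$.

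\emph{The bounded inclusion $T^{p'}_{q',\alpha}(\Bn)\hookrightarrow (T^p_{q,\alpha}(\Bn))^*$.} Fix $g\in T^{p'}_{q',\alpha}(\Bn)$. Applying \eqref{EqG} with $d\nu(z)=(1-|z|^2)^{n+\alpha}\,dV_n(z)$ and $\varphi=|f\overline g|$ gives
$$|\langle f,g\rangle_{n+\alpha}|\lesssim \int_{\Sn}\Big(\int_{\Gamma(\zeta)}|f(z)||g(z)|(1-|z|^2)^{\alpha}\,dV_n(z)\Big)\,d\sigma(\zeta).$$
When $1<p,q<\infty$ I would apply H\"older twice: inner with exponents $q,q'$, producing the product of the area functionals $A_q(f)(\zeta)=\big(\int_{\Gamma(\zeta)}|f|^q(1-|z|^2)^\alpha dV_n\big)^{1/q}$ and $A_{q'}(g)(\zeta)$, and then outer over $\Sn$ with exponents $p,p'$. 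This yields $|\langle f,g\rangle_{n+\alpha}|\lesssim \|f\|_{T^p_{q,\alpha}(\Bn)}\|g\|_{T^{p'}_{q',\alpha}(\Bn)}$. For $q=1$ the inner step is simply the $L^1$--$L^\infty$ estimate against the sup-functional defining $T^{p'}_\infty(\Bn)$, while for $p=1$ the pairing bound is the Carleson-measure estimate encoded in Lemma \ref{CM}, pairing $T^1_{q,\alpha}(\Bn)$ with $T^\infty_{q',\alpha}(\Bn)$.

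\emph{Producing a representing function.} Let $\Lambda\in (T^p_{q,\alpha}(\Bn))^*$. Since $p,q<\infty$, functions supported on compact subsets of $\Bn$ are dense, so $\Lambda$ is determined by its action on them. On a fixed compact $K$ the weight $(1-|z|^2)^{\alpha}$ is bounded above and below, and one checks directly that $\|f\|_{T^p_{q,\alpha}(\Bn)}\lesssim_K\|f\|_{L^q(K,dV_n)}$ for $f$ supported in $K$; duality of $L^q(K)$ (using $q<\infty$) then produces a density representing $\Lambda$ there. Patching over an exhaustion of $\Bn$ and absorbing the weight $(1-|z|^2)^{n+\alpha}$ yields a single $g\in L^1_{\mathrm{loc}}(\Bn)$ with $\Lambda(f)=\langle f,g\rangle_{n+\alpha}$ for all compactly supported $f\in T^p_{q,\alpha}(\Bn)$. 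It remains to prove $g\in T^{p'}_{q',\alpha}(\Bn)$ with $\|g\|_{T^{p'}_{q',\alpha}(\Bn)}\lesssim\|\Lambda\|$, which is the main obstacle.

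\emph{The norm estimate.} I would recover $\|g\|_{T^{p'}_{q',\alpha}(\Bn)}=\|A_{q'}(g)\|_{L^{p'}(\Sn)}$ by testing $\Lambda$ against a single well-chosen $f$. Pointwise, $g|g|^{q'-2}$ saturates the inner H\"older inequality, and by $L^{p'}(\Sn)$--duality one fixes $h\ge 0$, $\|h\|_{L^p(\Sn)}\le 1$, saturating the outer one. The difficulty is that the correct inner normalization $A_{q'}(g)(\zeta)^{q'-1}$ depends on $\zeta$, whereas $f$ must be a single function on $\Bn$ and the cones $\Gamma(\zeta)$ overlap. Following \cite{CMS,Lue1}, I would set $f(z)=g(z)|g(z)|^{q'-2}\Phi(z)$, where $\Phi$ lifts $h/A_{q'}(g)^{q'-1}$ from $\Sn$ to $\Bn$ by averaging over $I(z)$. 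Unfolding $\langle f,g\rangle_{n+\alpha}$ by \eqref{EqG} and Fubini, the pairing dominates a constant times $\int_{\Sn}A_{q'}(g)\,h\,d\sigma$, while $\|f\|_{T^p_{q,\alpha}(\Bn)}\lesssim\|h\|_{L^p(\Sn)}$ once the overlap of the cones is controlled by the $L^p(\Sn)$--boundedness of the attendant Hardy--Littlewood maximal operator, and this is precisely where $p>1$ enters. Combining, $\int_{\Sn}A_{q'}(g)h\,d\sigma\lesssim\|\Lambda\|$, and the supremum over $h$ finishes the estimate. The boundary cases are handled through the sup-reformulations: for $p=1$ the outer norm is an essential supremum and $g\in T^\infty_{q',\alpha}(\Bn)$ is verified through the Carleson criterion of Lemma \ref{CM} by testing on Carleson boxes $Q(u)$; for $q=1$ the inner extremizer degenerates to a concentration near the points where $|g|$ is large, giving the max-functional bound of $T^{p'}_\infty(\Bn)$ while the outer averaging persists. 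The excluded case $p=q=1$ is exactly the one in which both the maximal-function step and the local $L^q$--duality normalization fail, consistent with the dual there being of $L^\infty$ type.
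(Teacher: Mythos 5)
A preliminary remark: the paper does not prove this statement at all. It is imported as Theorem~A with references to \cite{CMS}, \cite{Lue1} and \cite{Pel}, so your attempt cannot be matched against an internal argument; it has to stand on its own as a proof of the Coifman--Meyer--Stein duality. Your architecture does follow the standard proofs: fold the pairing via \eqref{EqG} and apply H\"older; represent a functional locally by $L^{q'}$-duality on compacta; then show the representing function $g$ lies in $T^{p'}_{q',\alpha}(\Bn)$ by testing against a saturating function. The easy direction for $1<p,q<\infty$, the local representation step, and (in outline) the hard direction for $p=1<q$ via test functions $g|g|^{q'-2}\chi_{Q(u)}$ are all fine. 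However, the two places where the theorem has real content are asserted rather than proved, and the justifications you give for them do not work. First, the case $p=1$ of the ``easy'' inclusion is not H\"older and does not follow from Lemma~\ref{CM}. Writing $A_{q'}(g)(\zeta)=\bigl(\int_{\Gamma(\zeta)}|g|^{q'}(1-|z|^2)^\alpha dV_n\bigr)^{1/q'}$, the Carleson-box norm of $T^\infty_{q',\alpha}(\Bn)$ does \emph{not} control $\operatorname{ess}\sup_\zeta A_{q'}(g)(\zeta)$: already in the disk, placing mass $\asymp 2^{-k}$ (with respect to $(1-|z|^2)^{n+\alpha}dV_n$) in the Whitney box at height $2^{-k}$ along a single radius toward $\zeta_0$ gives finite box norm while $A_{q'}(g)(\zeta)^{q'}\asymp\log(1/|\zeta-\zeta_0|)$, so the essential supremum is infinite. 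Hence the inner H\"older scheme is unavailable, and Lemma~\ref{CM} is merely a kernel characterization of Carleson measures, not a pairing estimate. The inequality $|\langle f,g\rangle_{n+\alpha}|\lesssim \|f\|_{T^1_{q,\alpha}(\Bn)}\|g\|_{T^\infty_{q',\alpha}(\Bn)}$ is precisely Theorem~1(b) of \cite{CMS}, proved there with tents over open sets and a stopping-time argument, none of which appears in your sketch.

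Second, and more seriously, the crux of the hard direction for $1<p,q<\infty$ --- the bound $\|f\|_{T^p_{q,\alpha}(\Bn)}\lesssim\|h\|_{L^p(\Sn)}$ for $f=g|g|^{q'-2}\Phi$ --- does not follow from the unweighted $L^p(\Sn)$-boundedness of the Hardy--Littlewood maximal operator $M$. Put $A=A_{q'}(g)$ and let $\Phi$ be the average of $hA^{1-q'}$ over $I(z)$. For $z\in\Gamma(\zeta)$ one only gets $\Phi(z)\lesssim M[hA^{1-q'}](\zeta)$, hence $A_q(f)(\zeta)\lesssim M[hA^{1-q'}](\zeta)\,A(\zeta)^{q'/q}$, and the required estimate becomes $\int_{\Sn}(Mu)^p w\,d\sigma\lesssim\int_{\Sn}u^p w\,d\sigma$ with $u=hA^{1-q'}$ and $w=A^{q'p/q}$: a \emph{weighted} maximal inequality, which holds for all $u$ only when $w$ is a Muckenhoupt $A_p$ weight --- and $A$ is the cone functional of an essentially arbitrary locally $L^{q'}$ function, so no such condition is available. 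You cannot pull $A^{1-q'}$ out of the average either, since $A$ is in no way approximately constant on $I(z)$; and a scaling check closes the absorption escape route: $f$ is homogeneous of degree $0$ in $g$ while $\Lambda$ and $\int hA\,d\sigma$ both scale linearly, so the bound on $\|f\|_{T^p_{q,\alpha}(\Bn)}$ must be entirely free of $A$. Overcoming exactly this obstruction is what the stopping-time construction of \cite{CMS}, or the discretization to lattices as in \cite{Ars} and \cite{Lue1} (compare Theorem~\ref{lattice} of the paper), is for; the same overlap problem, plus a measurable-selection issue, afflicts your one-sentence treatment of the $q=1$ hard direction. As it stands, the proposal reproduces the skeleton of the known proof but leaves out precisely the machinery that makes its two critical steps true.
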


Our goal is to establish a satisfactory holomorphic version of this theorem.

The integral estimate below is one of the key parts of our argument. This result can be found in \cite{Ars} and \cite{Jev}. The converse inequality is trivially valid for every $\lambda$.

\begin{otherl}\label{Gamma}
Let $0<s<\infty$ and $\lambda>n\max(1,1/s)$. If $\mu$ is a positive measure, then
$$\int_{\Sn}\left[\int_{\Bn}\left(\frac{1-|z|^2}{|1-\langle z,\zeta\rangle|}\right)^\lambda d\mu(z)\right]^s d\sigma(\zeta)\lesssim \int_{\Sn}\mu(\Gamma(\zeta))^s d\sigma(\zeta).$$
\end{otherl}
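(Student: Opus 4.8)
The plan is to set aside the trivial converse and prove the stated inequality by splitting into the two regimes dictated by the hypothesis $\lambda>n\max(1,1/s)$, namely $s\ge 1$ (where $\lambda>n$) and $0<s<1$ (where $\lambda>n/s$). Throughout I would write $K(z,\zeta)=\left(\frac{1-|z|^2}{|1-\langle z,\zeta\rangle|}\right)^\lambda$, $F(\zeta)=\int_{\Bn}K(z,\zeta)\,d\mu(z)$ and $A(\eta)=\mu(\Gamma(\eta))$, so that the claim is $\|F\|_{L^s(\sigma)}\lesssim\|A\|_{L^s(\sigma)}$. The common engine is the nonisotropic Hardy--Littlewood maximal operator $M$ on $\Sn$, taken with respect to the balls $B(\zeta,\delta)=\{\eta:|1-\langle\zeta,\eta\rangle|<\delta\}$, for which $\sigma(B(\zeta,\delta))\asymp\delta^n$ and which is bounded on $L^r(\sigma)$ for every $r>1$. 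The geometric fact I would isolate first is a pointwise ``approximate identity'' estimate: if $z\in\Gamma(\eta)$, then using $|1-\langle z,\xi\rangle|\asymp(1-|z|^2)+|1-\langle\eta,\xi\rangle|$ for all $\xi\in\Sn$, decomposing $\Sn$ into the nonisotropic annuli $\{|1-\langle\eta,\xi\rangle|\asymp 2^k(1-|z|^2)\}$, and summing a geometric series (convergent precisely because $\lambda>n$), one obtains
\[ \int_{\Sn}g(\xi)K(z,\xi)\,d\sigma(\xi)\lesssim(1-|z|^2)^n\,Mg(\eta)\qquad(g\ge 0). \]

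For $s\ge 1$ I would argue by duality. Testing $\|F\|_{L^s}$ against $g\ge 0$ with $\|g\|_{L^{s'}}\le 1$, Tonelli gives $\int_{\Sn}Fg\,d\sigma=\int_{\Bn}\big(\int_{\Sn}g(\xi)K(z,\xi)\,d\sigma(\xi)\big)\,d\mu(z)$. Feeding in the approximate identity estimate and then applying \eqref{EqG} with $\varphi(z)=\int_{\Sn}gK(z,\cdot)\,d\sigma$ and $\nu=\mu$, the slice over $\Gamma(\eta)$ is controlled by $Mg(\eta)\,\mu(\Gamma(\eta))$, whence
\[ \int_{\Sn}Fg\,d\sigma\lesssim\int_{\Sn}Mg(\eta)\,A(\eta)\,d\sigma(\eta)\le\|Mg\|_{L^{s'}}\|A\|_{L^s}\lesssim\|g\|_{L^{s'}}\|A\|_{L^s}, \]
the maximal theorem being legitimate on $L^{s'}$ since $s'>1$ (and trivially when $s=1$, $s'=\infty$). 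Taking the supremum over $g$ settles this regime; note that $s=1$ is in fact an equivalence up to constants, since $\int_{\Sn}K(z,\cdot)\,d\sigma\asymp(1-|z|^2)^n$ and \eqref{EqG} convert $\int_{\Bn}(1-|z|^2)^n\,d\mu$ into $\int_{\Sn}A\,d\sigma$.

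For $0<s<1$ duality is unavailable, so I would reduce to a pointwise maximal domination. Fix $s_0$ with $n/\lambda<s_0<s$, possible exactly because $\lambda>n/s$; this arranges both $\lambda s_0>n$ and $s/s_0>1$. The target is the pointwise bound $F(\zeta)\lesssim\big[M(A^{s_0})(\zeta)\big]^{1/s_0}$, which, once available, concludes the proof at once:
\[ \|F\|_{L^s}^s\lesssim\int_{\Sn}\big[M(A^{s_0})\big]^{s/s_0}\,d\sigma=\|M(A^{s_0})\|_{L^{s/s_0}}^{s/s_0}\lesssim\|A^{s_0}\|_{L^{s/s_0}}^{s/s_0}=\|A\|_{L^s}^s, \]
the middle step being the maximal theorem on $L^{s/s_0}$, valid as $s/s_0>1$.

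It remains to establish this pointwise bound, and this is where I expect the main difficulty. I would decompose $\Bn$, relative to the fixed $\zeta$, into the nonisotropic Carleson shells $D_j=\{2^{-j-1}\le|1-\langle z,\zeta\rangle|<2^{-j}\}$, and split each $D_j$ further by height $1-|z|^2\asymp 2^{-i}$ with $i\ge j$, on which $K\asymp 2^{(j-i)\lambda}$. Covering the relevant shadows by bounded-overlap balls of radius $2^{-i}$ inside $B(\zeta,C2^{-j})$ and using that $A$ dominates the local mass on a fixed fraction of each ball turns that local mass into an average of $A^{s_0}$; the elementary inequality $\sum_l m_l\le(\sum_l m_l^{s_0})^{1/s_0}$ (valid for $s_0<1$) together with the $i$-summation (geometric because $\lambda s_0>n$) then collapses the height sum. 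The crux is the remaining summation in $j$: bounding each shell separately by the \emph{global} maximal function overcounts the mass that accumulates in the deep tents over $\zeta$ and diverges, so one must instead charge each portion of $\mu$ to $A$ only once, at its own scale. I would resolve this by a stopping-time selection of the dominant scale (equivalently, summation by parts against the nondecreasing cumulative tent mass), the exponent gap $\lambda s_0>n$ again supplying the geometric decay that makes the rearranged sum telescope into a single average $\tfrac1{\sigma(B)}\int_B A^{s_0}\,d\sigma\le M(A^{s_0})(\zeta)$. This bookkeeping is routine in spirit but is the genuinely technical step; the underlying kernel estimates are exactly those recorded around \eqref{EqG} and Lemma \ref{CM}.
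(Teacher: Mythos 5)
The paper itself offers no proof of Lemma \ref{Gamma}: it is imported as a known result from \cite{Ars} and \cite{Jev} (with the converse noted as trivial), so your attempt can only be judged against the standard arguments in those sources. Your treatment of the regime $s\ge 1$ is correct and is essentially the classical argument: for $z\in\Gamma(\eta)$ one has $|1-\langle z,\xi\rangle|\asymp(1-|z|^2)+|1-\langle\eta,\xi\rangle|$ with constants depending only on the aperture, the dyadic-annulus computation then gives $\int_{\Sn}g(\xi)K(z,\xi)\,d\sigma(\xi)\lesssim(1-|z|^2)^n\,Mg(\eta)$ precisely because $\lambda>n$, and \eqref{EqG}, H\"older and the $L^{s'}$ maximal theorem finish; this half needs only cosmetic polish.

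The case $0<s<1$, however, rests on a false statement: the pointwise inequality $F(\zeta)\lesssim[M(A^{s_0})(\zeta)]^{1/s_0}$ fails, for every admissible choice of $s_0$, and no stopping-time or summation-by-parts bookkeeping can rescue it. Concretely, fix $\zeta\in\Sn$ and a constant $B$ much larger than the aperture $\gamma$, and for $j=1,\dots,N$ put a unit point mass at a point $z_j$ with $1-|z_j|^2=2^{-j}$ and $|1-\langle z_j,\zeta\rangle|\asymp B2^{-j}$ (say $z_j=r_j\eta_j$ with $|1-\langle\eta_j,\zeta\rangle|=B2^{-j}$). Each mass contributes $\asymp B^{-\lambda}$ to $F(\zeta)$, so $F(\zeta)\gtrsim N$ with constants depending only on $B,\lambda$. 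On the other hand, $z_j\in\Gamma(\eta)$ forces $|1-\langle\eta,\zeta\rangle|\asymp B2^{-j}$ (triangle inequality for $|1-\langle\cdot,\cdot\rangle|^{1/2}$), and since these dyadic windows have bounded overlap in $j$, every $\eta$ lies in at most $C(\gamma,B)$ of the shadows $I(z_j)$; hence $A=\mu(\Gamma(\cdot))\le C$ on all of $\Sn$, so $[M(A^{s_0})(\zeta)]^{1/s_0}\le C^{1/s_0}$ uniformly in $N$. (The lemma itself is not contradicted: $F\gtrsim N$ only on a ball of radius $\asymp 2^{-N}$ about $\zeta$, so $\int_{\Sn}F^s\,d\sigma$ stays bounded; the phenomenon of $F$ being large on a tiny set, with no maximal average of $A$ being large anywhere, is exactly what a pointwise domination cannot see.) This is the same divergence you flagged in your $j$-summation, but it is not a bookkeeping problem: each dyadic distance scale can contribute a fixed amount to $F(\zeta)$ while being charged to essentially disjoint, uniformly bounded pieces of $A$. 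The correct route for $s<1$ --- the one behind \cite{Ars}, \cite{Jev}, going back to \cite{CMS} and \cite{Lue1} --- works at the level of distribution functions rather than pointwise: decompose the kernel over dyadic apertures to get $F(\zeta)\lesssim\sum_{k\ge0}2^{-k\lambda}\mu(\Gamma_{2^k\gamma}(\zeta))$, apply $(\sum_k a_k)^s\le\sum_k a_k^s$, and then invoke a change-of-aperture estimate $\int_{\Sn}\mu(\Gamma_{\beta}(\zeta))^s\,d\sigma(\zeta)\lesssim\beta^{n}\int_{\Sn}\mu(\Gamma_{\gamma}(\zeta))^s\,d\sigma(\zeta)$, which is an $L^s$-level covering (good-$\lambda$) statement; the resulting series $\sum_k 2^{-k(\lambda s-n)}$ converges precisely because $\lambda s>n$. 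That change-of-aperture estimate is the genuinely missing idea in your sketch, and it cannot be replaced by maximal-function domination at a single point.
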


\subsection{Forelli-Rudin estimates}

We recall some important integral estimates involving Bergman type kernels on balls. The first one is the classical Forelli-Rudin estimate (see \cite{FR} or \cite[Theorem 1.12]{ZhuBn} for example). The sharp constants for the estimate are also known, see \cite{Liu}.
\begin{otherl}\label{IctBn}
Let $t>-1$ and $s>0$. Then,
\[ \int_{\Bn} \frac{(1-|u|^2)^t\,dV_n(u)}{|1-\langle z,u\rangle |^{n+1+t+s}}\lesssim (1-|z|^2)^{-s}\]
for all $z\in \Bn$.
\end{otherl}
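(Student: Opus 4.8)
The plan is to expand the kernel into a power series and reduce the estimate to summing an explicit power series in $|z|^2$. Set $c=n+1+t+s$. Since the weight $(1-|u|^2)^t\,dV_n(u)$ is invariant under the unitary action on $\Bn$, I may assume $z=(|z|,0,\dots,0)$, so that $\langle z,u\rangle=|z|\,u_1$. Writing $|1-\langle z,u\rangle|^{-c}=(1-\langle z,u\rangle)^{-c/2}\,\overline{(1-\langle z,u\rangle)^{-c/2}}$ and using the binomial expansion $(1-w)^{-c/2}=\sum_{k\geq0}a_k w^k$ with $a_k=\Gamma(k+c/2)/(k!\,\Gamma(c/2))$, I obtain
$$\frac{1}{|1-\langle z,u\rangle|^{c}}=\sum_{j,k\geq0}a_j a_k\,|z|^{j+k}\,u_1^{j}\,\overline{u_1}^{k}.$$
All terms being positive after taking absolute values, Tonelli's theorem permits integrating term by term over $\Bn$.

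The double sum collapses to a diagonal one: the substitution $u\mapsto e^{i\phi}u$ preserves both $\Bn$ and the weight while multiplying $u_1^{j}\overline{u_1}^{k}$ by $e^{i(j-k)\phi}$, so the integral of the off-diagonal terms vanishes. Hence
$$\int_{\Bn}\frac{(1-|u|^2)^t\,dV_n(u)}{|1-\langle z,u\rangle|^{c}}=\sum_{k\geq0}a_k^2\,|z|^{2k}\int_{\Bn}|u_1|^{2k}(1-|u|^2)^t\,dV_n(u).$$
Passing to polar coordinates and evaluating the resulting Beta integral (here $t>-1$ guarantees convergence), the remaining integral equals $n!\,\Gamma(t+1)\,\Gamma(k+1)/\Gamma(k+n+t+1)$.

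Finally I would read off the growth of the coefficients via Stirling's formula. One has $a_k^2\sim C\,k^{c-2}$ and $\Gamma(k+1)/\Gamma(k+n+t+1)\sim C'\,k^{-(n+t)}$, so the $k$-th coefficient grows like $k^{c-2-n-t}=k^{s-1}$, the last equality being exactly where $c=n+1+t+s$ enters. Comparing with the elementary asymptotic $\sum_{k\geq0}k^{s-1}x^{k}\asymp(1-x)^{-s}$ as $x\to1^-$---valid precisely because $s>0$---and setting $x=|z|^2$ gives the claimed bound. The only delicate point is the Gamma-function bookkeeping: one must check that the two Stirling asymptotics combine to the exponent $s-1$ with constants uniform in $k$, so that the term-by-term comparison of the two series is legitimate; the rotation reduction, term-by-term integration and vanishing of the off-diagonal terms all follow from positivity and symmetry.
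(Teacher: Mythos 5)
Your proof is correct, and it is essentially the classical argument behind the source the paper cites for this lemma (Theorem 1.12 in Zhu's book): the paper itself gives no proof, and your route --- unitary reduction, binomial expansion, killing the off-diagonal terms by the rotation $u\mapsto e^{i\phi}u$, the moment computation $\int_{\Bn}|u_1|^{2k}(1-|u|^2)^t\,dV_n(u)=n!\,\Gamma(t+1)\,\Gamma(k+1)/\Gamma(k+n+t+1)$, and Stirling asymptotics giving coefficients $\asymp k^{s-1}$ --- is exactly that proof. One cosmetic slip: with $z=(|z|,0,\dots,0)$ one has $\langle z,u\rangle=|z|\,\bar{u}_1$ rather than $|z|\,u_1$, but this is immaterial since only the diagonal terms $|u_1|^{2k}$ survive.
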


The following, more general version of Lemma \ref{IctBn} is, along with Lemma \ref{Gamma}, perhaps the most used estimate in this paper. The proof can found in \cite{OF}. By, for instance, Vitali's convergence theorem, we may sometimes assume that the points are in $\overline{\Bn}$.

\begin{otherl}\label{FRgeneral}
Let $s>-1$, $s+n+1>r,t>0$, and $r+t-s>n+1$. For $u,z \in \overline{\Bn}$, one has
\begin{equation}\label{FR1}
\int_{\Bn} \frac{(1-|w|^2)^s dV_n(w)}{|1-\langle u,w\rangle|^r |1-\langle z,w\rangle|^t} \lesssim \frac{1}{|1-\langle z,u\rangle|^{r+t-s-n-1}}.
\end{equation}
If $s>-1$, $r,t>0$, $t>s+n+1>r$ and $r+t>s+n+1$, then for $z \in \Bn$ and $u \in \overline{\Bn}$,
\begin{equation}\label{FR2}\int_{\Bn} \frac{(1-|w|^2)^s dV_n(w)}{|1-\langle u,w\rangle|^r |1-\langle z,w\rangle|^t} \lesssim \frac{1}{|1-\langle z,u\rangle|^{r}(1-|z|^2)^{t-s-n-1}}.
\end{equation}

\end{otherl}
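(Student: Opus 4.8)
The plan is to reduce both estimates to the single-kernel Forelli--Rudin bound of Lemma \ref{IctBn} by decomposing $\Bn$ according to the Kor\'anyi pseudodistance from the two poles $z$ and $u$. Throughout write $\delta=|1-\langle z,u\rangle|$. Two elementary facts do the bulk of the work. The first is the quasi-triangle inequality $\delta^{1/2}\lesssim |1-\langle z,w\rangle|^{1/2}+|1-\langle u,w\rangle|^{1/2}$, valid for all $w\in\overline{\Bn}$, which guarantees that for every $w$ at least one of the two kernel factors is comparable to, or larger than, $\delta$. The second is the local (``incomplete'') Forelli--Rudin estimate
$$\int_{|1-\langle a,w\rangle|<\rho}\frac{(1-|w|^2)^s}{|1-\langle a,w\rangle|^c}\,dV_n(w)\asymp \rho^{\,n+1+s-c},\qquad c<n+1+s,\ s>-1,$$
obtained by summing over the dyadic Kor\'anyi shells $|1-\langle a,w\rangle|\asymp 2^{-k}\rho$; with $c=0$ this is just the statement that a Kor\'anyi ball of radius $\rho$ has weighted volume $\asymp\rho^{\,n+1+s}$.

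I would treat \eqref{FR2} first, as it is the cleaner of the two. Here $t>n+1+s>r$, and the crucial observation is that $\delta\gtrsim 1-|z|^2$ for interior $z$. Split $\Bn$ into the region $A=\{w:|1-\langle u,w\rangle|\ge c\delta\}$ and its complement $B$. On $A$ one pulls out $|1-\langle u,w\rangle|^{-r}\lesssim\delta^{-r}$ and applies Lemma \ref{IctBn} to the remaining $z$-kernel; since $t>n+1+s$ this produces exactly the factor $(1-|z|^2)^{-(t-s-n-1)}$, matching the target. On $B$ the quasi-triangle inequality forces $|1-\langle z,w\rangle|\gtrsim\delta$, so one pulls out $|1-\langle z,w\rangle|^{-t}\lesssim\delta^{-t}$ and integrates the $u$-kernel over the small Kor\'anyi ball $B$ by the local estimate with $c=r<n+1+s$, giving $\delta^{-t}\delta^{\,n+1+s-r}=\delta^{-r}\delta^{-(t-s-n-1)}$; the inequality $\delta\gtrsim 1-|z|^2$ then converts this into the desired bound.

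The estimate \eqref{FR1}, where both $r,t<n+1+s$, is more delicate, because a two-region split no longer suffices: pulling the full factor $\delta^{-r}$ out on $\{|1-\langle u,w\rangle|\ge c\delta\}$ overestimates, precisely on the part of the ball far from both poles. I would therefore decompose into three pieces: $\{|1-\langle z,w\rangle|<\delta/K\}$, $\{|1-\langle u,w\rangle|<\delta/K\}$, and the remainder where $w$ is far from both poles. On the first two the quasi-triangle inequality makes the opposite kernel comparable to $\delta$, and the local Forelli--Rudin estimate (now with $c=t$ and $c=r$ respectively, both admissible since $r,t<n+1+s$) yields $\delta^{-r}\delta^{\,n+1+s-t}=\delta^{-(r+t-s-n-1)}$ on each. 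On the remaining region both kernels exceed $\delta/K$, and since the poles are only $\delta$ apart the two factors are comparable on each dyadic Kor\'anyi shell $|1-\langle z,w\rangle|\asymp 2^{j}\delta$; the contribution of such a shell is $\asymp (2^{j}\delta)^{-r-t}(2^{j}\delta)^{n+1+s}=(2^{j}\delta)^{\,n+1+s-r-t}$, and because $r+t>n+1+s$ the exponent is negative, so summing the geometric series over $j\ge 0$ is dominated by the $j=0$ term $\delta^{-(r+t-s-n-1)}$.

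I expect the handling of this last ``far from both poles'' region in \eqref{FR1} to be the main obstacle: it is the only place where both singularities act simultaneously and neither kernel can be discarded, so it genuinely requires the dyadic summation rather than a single application of Lemma \ref{IctBn}. The bookkeeping of the comparability constants in the quasi-triangle inequality, namely choosing $K$ large enough that ``near one pole'' really does force ``far from the other'' and that the three regions exhaust $\Bn$, is routine but must be carried out with some care.
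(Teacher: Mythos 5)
Your proof is correct, and it is worth noting at the outset that the paper itself contains no argument to compare against: Lemma \ref{FRgeneral} is quoted from Ortega--F\'abrega \cite{OF}, so your self-contained derivation is a genuine alternative to the citation. The ingredients you rely on are sound. The quasi-triangle inequality is in fact a true triangle inequality for $d(z,w)=|1-\langle z,w\rangle|^{1/2}$ on $\overline{\Bn}$ (see \cite{Rud}), which justifies both the disjointness of the two ``near a pole'' regions for $K$ large and the two-sided comparability $|1-\langle u,w\rangle|\asymp|1-\langle z,w\rangle|\asymp 2^{j}\delta$ on the dyadic shells of the far region (for small $j$ the lower bound comes from the defining constraint of that region rather than from the triangle inequality, as your construction implicitly provides). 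The shell summation is exactly where the hypothesis $r+t>n+1+s$ enters for \eqref{FR1}, and your two-region treatment of \eqref{FR2} correctly exploits $\delta\gtrsim 1-|z|^2$ together with $t>n+1+s>r$, splitting the target bound into the Lemma \ref{IctBn} contribution near $z$ and the local-ball contribution near $u$. Two small points of bookkeeping: the local (incomplete) Forelli--Rudin estimate should be stated as an upper bound $\lesssim\rho^{\,n+1+s-c}$ rather than $\asymp$, since for an interior pole $a$ and $\rho<1-|a|$ the Kor\'anyi ball is empty and the integral vanishes -- only the upper bound is used, so nothing breaks; and in \eqref{FR1} the degenerate case $\delta=0$ (which forces $z=u\in\Sn$) makes both sides infinite and should be dismissed at the outset. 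Compared with the cited route through \cite{OF}, what your argument buys is elementarity and uniformity: both estimates follow from one decomposition scheme plus the single-kernel Lemma \ref{IctBn}, at the cost of the constant-chasing you acknowledge.
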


We will mostly use the formula \eqref{FR1}, and will explicitly state when \eqref{FR2} is used.\\

\section{Holomorphic tent spaces $\mathcal{HT}^p_{q,\alpha}(\Bn)$ and $\mathcal{BT}^p(\Bn)$}

\noindent In this section we will define the holomorphic tent spaces $\mathcal{HT}^p_{q,\alpha}(\Bn)$ and $\mathcal{BT}^p(\Bn)$, and characterize them in terms of the fractional derivatives.

\begin{definition}
Let $0<p<\infty$, $0<q\leq\infty$ and $\alpha>-n-1$. The Hardy type tent space $\mathcal{HT}^p_{q,\alpha}(\Bn)$ consists of holomorphic functions on $\Bn$ that also belong to $T^p_{q,\alpha}(\Bn)$. The quasinorm of $f \in \mathcal{HT}^p_{q,\alpha}(\Bn)$ will be given by $\|f\|_{\mathcal{HT}^p_{q,\alpha}(\Bn)}=\|f\|_{T^p_{q,\alpha}(\Bn)}$.
\end{definition}

It is not difficult to see that every $\mathcal{HT}^p_{q,\alpha}(\Bn)$ belongs to some $A^1_\beta(\Bn)$. In particular, the standard reproducing formulas and fractional differential operators can be used, albeit one must be somewhat careful.

We next present the area function description of the Hardy spaces, which is the motivation for our terminology "Hardy type tent space". It is usually expressed in terms of some other differential operators, see \cite{AB, FS}; for a proof of the present variant, see \cite{PP}.   This name is also chosen to distinguish from the "Bergman type tent spaces" studied in \cite{PR2015}.

\begin{otherth}\label{area}
Let $0<p<\infty$, $t>0$ and neither $n+s$, nor $n+s+t$ is a negative integer. The holomorphic function $f$ belongs to the Hardy space $H^p(\Bn)$ if and only if $R^{s,t}f \in \mathcal{HT}^p_{2,2t-1-n}(\Bn)$. Moreover,  
$$\|f\|_{H^p(\Bn)}\asymp \|R^{s,t}f\|_{\mathcal{HT}^p_{2,2t-1-n}(\Bn)}.$$
\end{otherth}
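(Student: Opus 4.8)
The goal is to prove Theorem~\ref{area}, which characterizes membership in the Hardy space $H^p(\Bn)$ via the area function applied to the fractional derivative $R^{s,t}f$. I would attack this in two directions: first, reduce the general $(s,t)$ statement to a single reference case, and second, establish that reference case directly. The statement is classical in spirit (the area function description of Hardy spaces), so the role of $R^{s,t}$ is essentially to supply enough smoothing/differentiation to land in the $q=2$ tent space with the weight $\alpha=2t-1-n$; the exponent bookkeeping is what needs care.

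\emph{Reduction step.} The plan is to show that the quantity $\|R^{s,t}f\|_{\mathcal{HT}^p_{2,2t-1-n}(\Bn)}$ does not really depend on the particular admissible pair $(s,t)$, only through $f$. Concretely, if $(s_1,t_1)$ and $(s_2,t_2)$ are two admissible pairs, I would relate $R^{s_1,t_1}f$ and $R^{s_2,t_2}f$ by writing one as a fractional operator applied to the other, using $R^{s,t}R_{s,t}=R_{s,t}R^{s,t}=Id$ together with Lemma~\ref{change} to handle the mismatch in parameters (which introduces polynomial factors $\phi,\psi$ that are harmless after estimating by their $L^\infty(\Bn)$ norms). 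The integral representations \eqref{FracDer}--\eqref{FracInt} then express the passage from one derivative to the other as integration against a Bergman-type kernel, and the two-variable Forelli--Rudin estimate \eqref{FR1} of Lemma~\ref{FRgeneral} controls the resulting kernel. Feeding this pointwise/integral comparison into Lemma~\ref{Gamma} (with the measure built from $|R^{s_i,t_i}f|^2(1-|z|^2)^{\alpha_i}$) should yield the two-sided equivalence of the tent-space quasinorms across admissible pairs. This reduces everything to verifying the theorem for one convenient choice, e.g.\ the first-order radial derivative corresponding to a classical normalization.

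\emph{Base case.} For the reference pair I would invoke the known area-integral characterization of $H^p(\Bn)$: a holomorphic $f$ lies in $H^p(\Bn)$ iff the square area function
$$
A f(\zeta)=\left(\int_{\Gamma(\zeta)}|\nabla f(z)|^2(1-|z|^2)^{1-n}\,dV_n(z)\right)^{1/2}
$$
lies in $L^p(\Sn)$, with $\|f\|_{H^p}\asymp \|Af\|_{L^p(\Sn)}+|f(0)|$; this is exactly the statement referenced from \cite{AB, FS, PP}. Matching exponents, $|\nabla f|^2(1-|z|^2)^{1-n}$ corresponds to $q=2$ and $\alpha=1-n=2t-1-n$ with $t=\tfrac12$, or more generally to the appropriate $R^{s,t}$ after the reduction above. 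So the base case is essentially a restatement of the cited area-function theorem, and the content of the proof is really the reduction that transfers it to all admissible $(s,t)$ and the weight $\alpha=2t-1-n$.

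\emph{Main obstacle.} I expect the hard part to be the kernel estimate in the reduction, specifically verifying that the parameters arising when one composes $R^{s_1,t_1}$ with $R_{s_2,t_2}$ (and absorbs the polynomials from Lemma~\ref{change}) actually satisfy the hypotheses $s>-1$, $s+n+1>r,t>0$, $r+t-s>n+1$ required by \eqref{FR1}, uniformly enough to get a clean two-sided bound rather than just one inequality. Establishing the reverse inequality is where the trivial direction of Lemma~\ref{Gamma} and a careful choice of which operator to invert become essential; one must ensure the smoothing direction (integration $R_{s,t}$) and the differentiating direction ($R^{s,t}$) are balanced so that no endpoint exponent is lost. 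Once the parameter ranges are checked, combining \eqref{FR1} with Lemma~\ref{Gamma} should close both inequalities and, together with the base case, complete the proof.
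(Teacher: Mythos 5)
First, a point of reference: the paper itself does not prove Theorem \ref{area}. It is stated as quoted background (hence the lettered numbering, like Lemmas \ref{change} and \ref{FRgeneral}), with the classical gradient versions attributed to \cite{AB, FS} and the "present variant" in terms of $R^{s,t}$ attributed to \cite{PP}. So there is no internal proof to compare against, and your proposal has to stand on its own. Its reduction step does stand: the equivalence of $\|R^{s,t}f\|_{\mathcal{HT}^p_{2,2t-1-n}(\Bn)}$ across admissible pairs $(s,t)$ follows from exactly the machinery you describe, and is in effect the $q=2$ case of Theorem \ref{HTchar}, which the paper proves with Lemmas \ref{change}, \ref{FRgeneral} and \ref{Gamma}; the parameter hypotheses cause no trouble there since $qt+n+1+\alpha=2t_2>0$ is automatic.

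The genuine gap is in your base case. The classical area theorem you invoke is stated for the gradient $\nabla f$, and $\nabla$ is not a member of the $R^{s,t}$ family: it kills constants, it is not invertible, and it contains tangential components. Your reduction step only moves between operators of the form $R^{s,t}$ via $R^{s,t}R_{s,t}=Id$, so it can never reach the gradient statement. The missing bridge --- comparing $\int_{\Gamma(\zeta)}|\nabla f(z)|^2(1-|z|^2)^{1-n}dV_n(z)$ with $\int_{\Gamma(\zeta)}|R^{s,1}f(z)|^2(1-|z|^2)^{1-n}dV_n(z)$ --- is a real theorem, not bookkeeping: note that $R^{s,1}f=f+\frac{1}{n+1+s}Rf$ with $R$ the radial derivative, so one needs both the (nontrivial) equivalence of gradient and radial-derivative area integrals and control of the zeroth-order term via $H^p(\Bn)=\mathcal{HT}^p_{\infty,\alpha}(\Bn)\subset\mathcal{HT}^p_{2,\alpha}(\Bn)$; one must also remove the $|f(0)|$ term present in the classical statement but absent from Theorem \ref{area}, which is exactly the point the paper flags when it remarks that $R^{s,t}$ yields a quasinorm without a point-evaluation at zero. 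This bridge is precisely the content of the variant proved in \cite{PP}, so calling the base case "essentially a restatement" of the cited theorem makes the argument circular. Finally, a small but real slip: the weight $1-n$ matches $\alpha=2t-1-n$ with $t=1$ (a first-order operator), not $t=\tfrac12$ as you wrote.
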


Note that one of the benefits of using $R^{s,t}$ is that we automatically get a quasinorm, without the need to introduce a point-evaluation at zero.

By the non-tangential maximal function characterization of the Hardy space, $\mathcal{HT}^p_{\infty,\alpha}(\Bn)=H^p(\Bn)$, see for instance \cite{Rud}. By \eqref{EqG}, we also have $\mathcal{HT}^p_{p,\alpha}(\Bn)=A^p_{n+\alpha}(\Bn)$.

For the proof of theorems \ref{HTchar} and \ref{BTchar}, it will be in order to explain our choice of parameters beforehand. The parameters $N$ and $M$ denote some large positive integers, and their purpose is to make the use of lemmas \ref{change}, \ref{Gamma} and \ref{FRgeneral} legal. We also use $\lambda>0$, which is usually assumed to be large compared to $N$ and $M$, in order to complete the use of Lemma \ref{FRgeneral}. Without specifying the exact value of these integers, we note that by the end of the proof, it will be clear that everything works since $N$ and $M$ can be made large. Bearing this in mind will make the presentation easier to understand.

We will next describe the spaces $\mathcal{HT}^p_{q,\alpha}(\Bn)$ in terms of the fractional derivatives $R^{s,t}$.

\begin{theorem}\label{HTchar}
Let $0<p,q<\infty$, $\alpha>-n-1$ and $s,t$ be real numbers. Assume further that neither $s+n$ nor $s+n+t$ is a negative integer and $qt+n+1+\alpha>0$. Then the following are equivalent.
\begin{itemize}
\item[(a)] $f \in \mathcal{HT}^p_{q,\alpha}(\Bn)$;
\item[(b)] $R^{s,t}f \in \mathcal{HT}^p_{q,\alpha+qt}(\Bn)$. 
\end{itemize}
Moreover, the corresponding quasinorms are equivalent.
\end{theorem}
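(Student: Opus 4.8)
The plan is to prove the equivalence by establishing two-sided estimates connecting $f$ and $R^{s,t}f$ through integral representations, exploiting that the fractional operators $R^{s,t}$ and $R_{s,t}$ are mutually inverse. The natural route is to write $g = R^{s,t}f$ and recover $f = R_{s,t}g$; by the symmetry of the situation it suffices to prove only one direction of the estimate, say $\|R^{s,t}f\|_{\mathcal{HT}^p_{q,\alpha+qt}(\Bn)} \lesssim \|f\|_{\mathcal{HT}^p_{q,\alpha}(\Bn)}$, and then apply the same inequality with the roles of $f$ and $g$ reversed (replacing $R^{s,t}$ by $R_{s,t}$ and $t$ by $-t$, adjusting the weight accordingly). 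Since the excerpt notes that every $\mathcal{HT}^p_{q,\alpha}(\Bn)$ function lies in some $A^1_\beta(\Bn)$, I would first fix a large integer $N$ so that the integral representation \eqref{FracDer} applies after writing $f$ against a reproducing kernel of the form $(1-\langle z,u\rangle)^{-(n+1+s+N)}$, using Lemma \ref{change} to produce the polynomial factor $\phi(\langle z,u\rangle)$, which is harmless since it is bounded on $\Bn$.

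The core of the argument is the pointwise estimate. Writing $R^{s,t}f(z)$ as an integral of $f(u)$ against the kernel
$$\frac{(1-|u|^2)^{s+N}\,\phi(\langle z,u\rangle)}{(1-\langle z,u\rangle)^{n+1+s+N+t}},$$
I would bound $|\phi|$ by $\|\phi\|_{L^\infty(\Bn)}$ and split the integration region. The key technical step is to dominate $|R^{s,t}f(z)|$ by an expression of the form
$$\int_{\Bn}\frac{(1-|u|^2)^{c}}{|1-\langle z,u\rangle|^{d}}\,|f(u)|\,dV_n(u)$$
and then feed this into Lemma \ref{Gamma} (or directly into the tent-space norm via \eqref{EqG}) after applying Hölder's inequality in $u$ when $q > 1$, or subharmonicity/the sub-mean-value property of $|f|^q$ when $q < 1$. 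The weight bookkeeping must be arranged so that the exponents meet the hypotheses of Lemma \ref{FRgeneral} and Lemma \ref{Gamma}; this is precisely where the condition $qt+n+1+\alpha>0$ enters, guaranteeing that the relevant exponent $\lambda$ can be taken larger than $n\max(1,1/q)$, and where the freedom to choose $N$ (and an auxiliary $\lambda$) large is used to legalize the estimates.

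I expect the main obstacle to be the case $q < 1$, where Hölder's inequality is unavailable in the direct form and one must instead invoke a subharmonicity argument on Korányi balls to replace $|f(u)|$ by an $L^q$-average over a slightly larger region before estimating. In the range $q \geq 1$ the computation reduces, after Hölder, to an application of the Forelli-Rudin estimate \eqref{FR1} followed by Lemma \ref{Gamma}, which is comparatively routine. The delicate points are ensuring that all parameter constraints in Lemmas \ref{change}, \ref{Gamma}, and \ref{FRgeneral} are simultaneously satisfiable for $N$ and $\lambda$ sufficiently large, and verifying that the reverse implication genuinely follows by the same machinery applied to $R_{s,t}$; the latter requires checking that the analogue of $qt+n+1+\alpha>0$ holds for the shifted weight $\alpha+qt$, which it does since subtracting $qt$ returns the original admissible weight. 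Once both directions yield the matching comparability $\|R^{s,t}f\|_{\mathcal{HT}^p_{q,\alpha+qt}(\Bn)} \asymp \|f\|_{\mathcal{HT}^p_{q,\alpha}(\Bn)}$, the equivalence of quasinorms is immediate.
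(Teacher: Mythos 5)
Your proposal is correct, and your forward estimate is obtained exactly as in the paper: represent $f$ against the kernel $(1-\langle z,u\rangle)^{-(n+1+s+N)}$ for a large integer $N$, use Lemma \ref{change} to absorb the polynomial factor, then H\"older together with Lemma \ref{IctBn} when $q\geq 1$ (respectively an $L^q\to L^1$ embedding device when $q<1$), followed by Lemma \ref{FRgeneral}, Fubini and Lemma \ref{Gamma}, with $N$ and the auxiliary $\lambda$ taken large. Where you genuinely depart from the paper is the converse. The paper proves it by a second, separate computation: for $q\geq 1$ it writes $f=R_{s,t}R^{s,t}f$ and runs another H\"older argument with an auxiliary $\varepsilon>0$ chosen so that $\alpha-\varepsilon q>-n-1$, and for $q<1$ it leaves the converse to the reader. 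Your symmetry reduction replaces both: since $R_{s,t}=R^{s+t,-t}$ by the uniqueness in the definition of the fractional operators, the converse is the forward estimate applied to $g=R^{s,t}f$ with parameters $(\tilde s,\tilde t,\tilde\alpha)=(s+t,-t,\alpha+qt)$, and the hypothesis set is exactly invariant under this swap: $\tilde\alpha>-n-1$ is the assumed $qt+n+1+\alpha>0$, the condition $q\tilde t+n+1+\tilde\alpha>0$ is the assumed $\alpha>-n-1$, and the non-integer conditions exchange $s+n$ and $s+n+t$. (Indeed, the condition $qt+n+1+\alpha>0$ is nothing but admissibility of the target weight $\alpha+qt$, which is why the hypotheses are symmetric.) This buys real economy: one estimate covers both directions for all $0<q<\infty$, it settles uniformly the $q<1$ converse that the paper omits, and it makes the symmetric role of the two weight conditions transparent. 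One small correction: the condition $qt+n+1+\alpha>0$ enters the forward proof in the H\"older/Forelli--Rudin step (positivity of the exponent $(qt+n+1+\alpha)/q'$), not in the choice of $\lambda$ for Lemma \ref{Gamma}; the largeness needed there is supplied by $N$.

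A caution on your $q<1$ step: replacing $|f(u)|$ pointwise by an $L^q$-average over a Bergman or Kor\'anyi ball does not by itself linearize the exponent $1/q$ that then sits inside the $u$-integral. The standard completion is to discretize over an $r$-lattice, use $\left(\sum_k a_k\right)^q\leq\sum_k a_k^q$ for $q\leq 1$, and then undo the discretization; this yields precisely the pointwise inequality
$$|R^{s,t}f(z)|\lesssim\left(\int_{\Bn}\frac{(1-|u|^2)^{q(s+N)+(q-1)(n+1)}}{|1-\langle z,u\rangle|^{q(1+n+s+N+t)}}\,|f(u)|^q\,dV_n(u)\right)^{1/q},$$
which the paper obtains in one stroke by citing Lemma 3 of \cite{ZhuSmall}. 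Either route is fine, since that lemma's proof is exactly the subharmonicity-plus-lattice argument, but the sub-mean-value property alone is not sufficient as stated.
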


\begin{proof}
Assume first that $q\geq 1$ (recall that $1'=\infty$). For a sufficiently large positive integer $N$, we have by H\"older's inequality and Lemmas \ref{change} and \ref{IctBn},
\begin{align*}
|R^{s,t}f(z)|^q &\lesssim \left(\int_{\Bn} \frac{(1-|u|^2)^{s+N}}{|1-\langle z,u\rangle|^{1+n+s+N+t}}|f(u)|dV_n(u)\right)^q\\
&\lesssim (1-|z|^2)^{-(qt+n+1+\alpha)/q'}\int_{\Bn}\frac{(1-|u|^2)^{s+N}}{|1-\langle z,u\rangle|^{1+n+s+N+t-(n+1+\alpha)/q'}}|f(u)|^qdV_n(u).
\end{align*}
If $z \in \Gamma(\zeta)$, then $(1-|z|^2)\asymp |1-\langle z,\zeta\rangle|$, so for a sufficiently large $\lambda>0$, we are allowed to use Lemma \ref{FRgeneral} together with Fubini's theorem to get
\begin{align*}
&\int_{\Gamma(\zeta)}|R^{s,t}f(z)|^q(1-|z|^2)^{qt+\alpha}dV_n(z)\\
\lesssim &\int_{\Gamma(\zeta)}(1-|z|^2)^{t+\alpha-(n+1+\alpha)/q'}\int_{\Bn}\frac{(1-|u|^2)^{s+N}}{|1-\langle z,u\rangle|^{1+n+s+N+t-(n+1+\alpha)/q'}}|f(u)|^qdV_n(u)\\
\lesssim &\int_{\Bn}\frac{(1-|z|^2)^{\lambda+t+\alpha-(n+1+\alpha)/q'}}{|1-\langle z,\zeta\rangle|^{\lambda}}\int_{\Bn}\frac{(1-|u|^2)^{s+N}}{|1-\langle z,u\rangle|^{1+n+s+N+t-(n+1+\alpha)/q'}}|f(u)|^qdV_n(u)\\
\lesssim&\int_{\Bn}\left(\frac{(1-|u|^2)}{|1-\langle u,\zeta\rangle|}\right)^{s+N-\alpha}|f(u)|^q (1-|u|^2)^\alpha dV_n(u).
\end{align*}
Now, since $N$ can be arbitrarily large, we are able to use Lemma \ref{Gamma} to obtain
\begin{align*}
&\int_{\Sn}\left(\int_{\Gamma(\zeta)}|R^{s,t}f(z)|^q(1-|z|^2)^{qt+\alpha}dV_n(z)\right)^{p/q}d\sigma(\zeta)\\
\lesssim &\int_{\Sn}\left(\int_{\Bn}\left(\frac{(1-|u|^2)}{|1-\langle u,\zeta\rangle|}\right)^{s+N-\alpha}|f(u)|^q (1-|u|^2)^\alpha dV_n(u)\right)^{p/q}d\sigma(\zeta)\\
\lesssim &\int_{\Sn}\left(\int_{\Gamma(\zeta)}|f(u)|^q(1-|u|^2)^{\alpha}dV_n(u)\right)^{p/q}d\sigma(\zeta).
\end{align*}
This shows that (a) implies (b).\\

Next, let us assume that (b) holds. We use the identity $R_{s,t}R^{s,t}f=f$ and Lemma \ref{change} to write for a sufficiently large positive integer $N$ and $\varepsilon>0$ small enough so that $\alpha-\varepsilon q>-n-1$

\begin{align*}
|f(z)|^q&\lesssim \left(\int_{\Bn}\frac{(1-|u|^2)^{s+N+t}}{|1-\langle z,u\rangle|^{1+n+s+N}}|R^{s,t}f(u)|dV_n(u)\right)^q\\
&\lesssim (1-|z|^2)^{-\varepsilon q}\int_{\Bn}\frac{(1-|u|^2)^{s+N+tq+\varepsilon q}}{|1-\langle z,u\rangle|^{1+n+s+N}}|R^{s,t}f(u)|^qdV_n(u).
\end{align*}

As before, we will use Lemma \ref{FRgeneral} and Fubini's theorem, this time omitting some of the details, to arrive at
\begin{align*}
&\int_{\Gamma(\zeta)}|f(z)|^q(1-|z|^2)^{\alpha}dV_n(z)\\
\lesssim &\int_{\Bn}\left(\frac{(1-|u|^2)}{|1-\langle u,\zeta\rangle|}\right)^{s+N-\alpha+\varepsilon q}|R^{s,t}f(u)|^q (1-|u|^2)^{qt+\alpha} dV_n(u).
\end{align*}
Because $N$ was allowed to be very large, an application of Lemma \ref{Gamma} yields (a).\\

Let us now turn our attention to the case $q<1$. By Lemma 3 of \cite{ZhuSmall}, the following inequality holds for any holomorphic $F:\Bn\to \mathbb{C}$:
$$\int_{\Bn}|F(z)|(1-|z|^2)^{\eta+(1/q-1)(n+1+\eta)}dV_n(z)\lesssim \left(\int_{\Bn}|F(z)|^q (1-|z|^2)^\eta dV_n(z)\right)^{1/q},$$
when $0<q\leq 1$ and $\eta>-1$. Hence for a large positive integer $N$, if
$$s+N=\eta+(1/q-1)(n+1+\eta),$$ then 
$$\eta=q(s+N)+(q-1)(n+1)>-1,$$
and we have by Lemma \ref{change}
\begin{align*}
|R^{s,t}f(z)|&\lesssim \int_{\Bn} \frac{(1-|u|^2)^{s+N}}{|1-\langle z,u\rangle|^{1+n+s+N+t}}|f(u)|dV_n(u)\\
&\lesssim \left(\int_{\Bn}\frac{(1-|u|^2)^{q(s+N)+(q-1)(n+1)}}{|1-\langle z,u\rangle|^{q(1+n+s+N+t)}}|f(u)|^qdV_n(u)\right)^{1/q}.
\end{align*}

Now, for large positive $\lambda$, we will use Lemma \ref{FRgeneral} and Fubini's theorem to estimate
\begin{align*}
&\int_{\Gamma(\zeta)}|R^{s,t}f(z)|^q (1-|z|^2)^{qt+\alpha}dV_n(z)\\
\lesssim &\int_{\Gamma(\zeta)} \left(\int_{\Bn}\frac{(1-|u|^2)^{q(s+N)+(q-1)(n+1)}}{|1-\langle z,u\rangle|^{q(1+n+s+N+t)}}|f(u)|^q dV_n(u)\right)(1-|z|^2)^{qt+\alpha}dV_n(z)\\
\lesssim &\int_{\Bn} \frac{(1-|z|^2)^{\lambda+qt+\alpha}}{|1-\langle z,\zeta\rangle|^\lambda} \left(\int_{\Bn}\frac{(1-|u|^2)^{q(s+N)+(q-1)(n+1)}}{|1-\langle z,u\rangle|^{q(1+n+s+N+t)}}|f(u)|^q dV_n(u)\right)dV_n(z)\\
\lesssim &\int_{\Bn} \left(\frac{(1-|u|^2)}{|1-\langle u,\zeta\rangle|}\right)^{(q-1)(n+1)+q(s+N)-\alpha}|f(u)|^q (1-|u|^2)^\alpha dV_n(u).
\end{align*}
By allowing $N$ to be very large, we can use Lemma \ref{Gamma} to conclude that (a) implies (b).\\

The remaining case can be solved by techniques exhibited in the earlier cases. The proof is left for an interested reader.

\end{proof}

To understand the dual space of $\mathcal{HT}^p_{1,\alpha}(\Bn)$, we will introduce the following Hardy-Bloch type spaces $\mathcal{BT}^p(\Bn)$. 

\begin{definition}
Let, $1<p<\infty$, $t>0$, and assume that neither $n+s$ nor $n+s+t$ is a negative integer. A holomorphic $f$ belongs to the space $\mathcal{BT}^p(\Bn)$, if 
\begin{equation}\label{BT}
(1-|\cdot|^2)^t R^{s,t}f \in T^p_\infty(\Bn).
\end{equation}
Moreover, the expression
$$\|(1-|\cdot|^2)^tR^{s,t}f\|_{T^p_\infty(\Bn)}$$ is a norm on $\mathcal{BT}^p(\Bn)$.
\end{definition}

We remark that the following inclusion holds
$$H^p(\Bn)\cup \mathcal{B}(\Bn) \subset \mathcal{BT}^p(\Bn)$$ for every $\alpha>-n-1$.

It might seem that the space $\mathcal{BT}^p(\Bn)$ depends on the choice of $s$ and $t$ is some crucial way. We now prove that this is not the case.

\begin{theorem}\label{BTchar}
Let $1<p<\infty$, $t_1,t_2>0$ and $s_1,s_2$ be a real numbers. Assume further that none of the numbers $s_1+n$, $s_2+n$, $s_1+n+t_1$ and $s_2+n+t_2$ is a negative integer. Then the following are equivalent.
\begin{itemize}
\item[(a)] $(1-|\cdot|^2)^{t_1} R^{s_1,t_1} f \in T^p_\infty(\Bn)$;
\item[(b)] $(1-|\cdot|^2)^{t_2} R^{s_2,t_2} f \in T^p_\infty(\Bn)$. 
\end{itemize}
Moreover, the corresponding norms are equivalent.
\end{theorem}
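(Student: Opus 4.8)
The plan is to prove that (a) implies (b); the reverse implication follows by interchanging $(s_1,t_1)$ and $(s_2,t_2)$, and the two one-sided bounds give the norm equivalence. Write $g_i=(1-|\cdot|^2)^{t_i}R^{s_i,t_i}f$ and, for a measurable $g$, let $g^{*}(\zeta)=\sup_{z\in\Gamma(\zeta)}|g(z)|$ be the Kor\'anyi non-tangential maximal function, so that $\|g_i\|_{T^p_\infty(\Bn)}=\|g_i^{*}\|_{L^p(\Sn)}$. Assuming (a), the inclusion $g_1^{*}\in L^p(\Sn)\subset L^1(\Sn)$ forces $f$ to have at most polynomial growth, hence $f\in A^1_\gamma(\Bn)$ for every large $\gamma$; this legitimizes the reproducing formulas and fractional calculus used below.

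The first step is a pointwise integral bound expressing $g_2$ through $g_1$. I would \emph{not} apply $R^{s_2,t_2}$ directly to an $s_1$-based kernel, since $s_1-s_2$ need not be an integer and Lemma \ref{change} only absorbs integer shifts. Instead, reproduce $f$ with a high-order kernel, taking $\gamma=s_2+N_2$ with $N_2$ a large positive integer; the reproducing formula together with \eqref{pol1} gives
$$R^{s_2,t_2}f(z)=c\int_{\Bn}\frac{(1-|w|^2)^{\gamma}\,\phi(\langle z,w\rangle)}{(1-\langle z,w\rangle)^{n+1+\gamma+t_2}}\,f(w)\,dV_n(w),$$
with $\phi$ bounded on $\overline{\Bn}$. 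Independently, reproducing $g:=R^{s_1,t_1}f$ and applying $R_{s_1,t_1}$ (using \eqref{pol2} with an integer shift) represents $f=R_{s_1,t_1}g$ as
$$f(w)=c\int_{\Bn}\frac{(1-|v|^2)^{\gamma'}\,\psi(\langle w,v\rangle)}{(1-\langle w,v\rangle)^{n+1+\gamma'-t_1}}\,g(v)\,dV_n(v),$$
with $\psi$ bounded. Substituting the second formula into the first and collapsing the inner $w$-integral by the Forelli--Rudin estimate \eqref{FR1} (choosing $\gamma\gg\gamma'$ to meet its hypotheses), then writing $|g(v)|=(1-|v|^2)^{-t_1}|g_1(v)|$ and setting $b:=\gamma'-t_1$, I arrive at
$$|g_2(z)|\lesssim (1-|z|^2)^{t_2}\int_{\Bn}\frac{(1-|v|^2)^{b}\,|g_1(v)|}{|1-\langle z,v\rangle|^{n+1+b+t_2}}\,dV_n(v).$$

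The decisive step, and where the hypothesis $1<p<\infty$ is used, is to dominate the right-hand side by a Hardy--Littlewood maximal function on $\Sn$. Choosing the aperture $\gamma\geq 2$ (permissible by Lemma \ref{Gamma}) ensures $v\in\Gamma(v/|v|)$ for all $v\neq0$, whence $|g_1(v)|\leq g_1^{*}(v/|v|)$. Passing to polar coordinates $v=r\eta$ and integrating in $\eta$ first, the spherical integral is controlled by the standard kernel estimate $\int_{\Sn}g_1^{*}(\eta)\,|1-\langle rz,\eta\rangle|^{-(n+1+b+t_2)}\,d\sigma(\eta)\lesssim (1-|rz|^2)^{-(1+b+t_2)}\,\mathcal{M}(g_1^{*})(z/|z|)$, while the remaining one-dimensional radial integral is $\asymp (1-|z|^2)^{-t_2}$ because $t_2>0$ and $b$ is large. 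Since $z\in\Gamma(\zeta)$ gives $\mathcal{M}(g_1^{*})(z/|z|)\lesssim \mathcal{M}_\gamma(g_1^{*})(\zeta)$ for a suitably enlarged maximal operator, taking the supremum over $z\in\Gamma(\zeta)$ produces the pointwise domination $g_2^{*}(\zeta)\lesssim \mathcal{M}_\gamma(g_1^{*})(\zeta)$. The $L^p(\Sn)$-boundedness of $\mathcal{M}_\gamma$ for $p>1$ then gives $\|g_2\|_{T^p_\infty(\Bn)}\lesssim \|g_1\|_{T^p_\infty(\Bn)}$, which is (a)$\Rightarrow$(b).

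I expect the main obstacle to be this last maximal-function estimate rather than the reproducing bookkeeping. It rests on the non-isotropic annular decomposition of $\Sn$ about $\zeta$, on the equivalence $|1-\langle rz,\eta\rangle|\asymp (1-|z|^2)+|1-\langle \zeta,\eta\rangle|$ valid for $z\in\Gamma(\zeta)$, and crucially on $p>1$, so that $\mathcal{M}_\gamma$ is bounded on $L^p(\Sn)$; at $p=1$ the argument breaks down, consistent with $\mathcal{BT}^p(\Bn)$ being defined only for $1<p<\infty$.
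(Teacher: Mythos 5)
Your overall architecture (pointwise kernel estimate, then control of the nontangential maximal function) is sound, but the derivation of the pointwise estimate fails as written: the application of \eqref{FR1} is not legitimate, and no choice of $\gamma\gg\gamma'$ can make it legitimate. After substituting your representation of $f$ into your formula for $R^{s_2,t_2}f$, the collapsed $w$-integral is
$$\int_{\Bn}\frac{(1-|w|^2)^{\gamma}\,dV_n(w)}{|1-\langle z,w\rangle|^{n+1+\gamma+t_2}\,|1-\langle w,v\rangle|^{n+1+\gamma'-t_1}},$$
and \eqref{FR1} requires \emph{both} kernel exponents to be strictly less than $\gamma+n+1$; the first exponent is $n+1+\gamma+t_2>\gamma+n+1$ because $t_2>0$, and this violation persists no matter how large you take $\gamma$ relative to $\gamma'$ (both sides of the inequality grow together). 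The applicable estimate in this configuration is \eqref{FR2}, but it yields the weaker bound $|1-\langle z,v\rangle|^{-(n+1+b)}(1-|z|^2)^{-t_2}$ instead of $|1-\langle z,v\rangle|^{-(n+1+b+t_2)}$, i.e.\ after multiplying by $(1-|z|^2)^{t_2}$ you only get that $|g_2|$ is dominated by the \emph{maximal Bergman projection} of $|g_1|$, with no smoothing factor in the kernel. That is not enough: by Proposition \ref{notbdd} such projections are unbounded on $T^p_\infty(\Bn)$, and concretely your radial integral in step two becomes $\int_0^1 (1-r)^b(1-r|z|)^{-(1+b)}\,dr\asymp\log\frac{e}{1-|z|}$, producing a divergent logarithmic factor. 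So the estimate you claim is true, but your route to it collapses exactly at the point you dismissed as "bookkeeping."

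The fix is the paper's ordering of the composition: write $R^{s_2,t_2}f=R_{s_1,t_1}\bigl(R^{s_2,t_2}R^{s_1,t_1}f\bigr)$ (the operators commute), expand the \emph{fractional integral} $R_{s_1,t_1}$ outermost with kernel exponent $1+n+s_1+N$ against weight $(1-|u|^2)^{s_1+N+t_1}$, and expand $R^{s_2,t_2}$ innermost. Then the integrated variable $u$ carries kernel exponents $1+n+s_1+N$ and $1+n+s_2+M+t_2$ against the weight exponent $s_1+N+t_1$: the first is admissible because $t_1>0$, the second because one may take $N\gg M$. This is precisely how \eqref{FR1} produces the kernel $(1-|w|^2)^{s_2+M-t_1}|1-\langle z,w\rangle|^{-(1+n+s_2+M+t_2-t_1)}$ with the crucial built-in smoothing. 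Once you have that estimate, your second half is fine and is genuinely different from the paper: the paper identifies the integral operator as the adjoint, under $\langle\cdot,\cdot\rangle_{s_2+M-t_1}$, of an operator it proves bounded on $T^{p'}_{1,s_2+M-t_1-n}(\Bn)$ via Lemma \ref{Gamma} and then invokes the tent-space duality of Theorem \ref{dual}, whereas you dominate the nontangential supremum directly by the Hardy--Littlewood maximal function on $\Sn$ and use its $L^p(\Sn)$-boundedness for $p>1$. Your intermediate claim that $\mathcal{M}(g_1^*)(z/|z|)\lesssim\mathcal{M}_\gamma(g_1^*)(\zeta)$ pointwise for $z\in\Gamma(\zeta)$ is not literally true (small balls centered at $z/|z|$ escape control), but it is unnecessary: in the spherical kernel estimate only balls of radius $\gtrsim 1-|rz|\gtrsim d(z/|z|,\zeta)$ contribute, and those are comparable to balls centered at $\zeta$, which gives $g_2^*(\zeta)\lesssim\mathcal{M}(g_1^*)(\zeta)$ directly; this is the classical Kor\'anyi-type maximal estimate. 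With the first step repaired, your maximal-function route is a correct and arguably more elementary alternative to the paper's duality argument, at the cost of invoking the maximal theorem on $\Sn$ rather than staying within the paper's Forelli--Rudin toolkit.
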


\begin{proof}
By symmetry, it will be enough to show that (a) implies (b). Since the fractional derivatives commute with each other, we have
$$R^{s_2,t_2}f=R^{s_2,t_2}R_{s_1,t_1}R^{s_1,t_1}f=R_{s_1,t_1}R^{s_2,t_2}R^{s_1,t_1}f.$$
it follows that if $N$ is a large positive integer, then by Lemma \ref{change}
$$|R^{s_2,t_2}f(z)|\lesssim \int_{\Bn}\frac{(1-|u|^2)^{s_1+N+t_1}}{|1-\langle z,u\rangle|^{1+n+s_1+N}}|R^{s_2,t_2}R^{s_1,t_1}f(u)|dV_n(u).$$
Further, if $M$ is a large positive integer, we can use Lemma \ref{change} again to estimate
$$|R^{s_2,t_2}R^{s_1,t_1}f(u)|\lesssim \int_{\Bn} \frac{(1-|w|^2)^{s_2+M}}{|1-\langle u,w\rangle|^{1+n+s_2+M+t_2}}|R^{s_1,t_1}f(w)|dV_n(w).$$
Therefore, by using Fubini's theorem and Lemma \ref{FRgeneral}, it holds that
\begin{align*}
&|R^{s_2,t_2}f(z)|\\
\lesssim& \int_{\Bn}\frac{(1-|u|^2)^{s_1+N+t_1}}{|1-\langle z,u\rangle|^{1+n+s_1+N}}\left(\int_{\Bn} \frac{(1-|w|^2)^{s_2+M}}{|1-\langle u,w\rangle|^{1+n+s_2+M+t_2}}|R^{s_1,t_1}f(w)|dV_n(w)\right)dV_n(u)\\
\lesssim &\int_{\Bn}\frac{(1-|w|^2)^{s_2+M-t_1}}{|1-\langle z,w\rangle|^{1+n+s_2+M+t_2-t_1}}(1-|w|^2)^{t_1} |R^{s_1,t_1}f(w)|dV_n(w).
\end{align*}
We note that under the pairing $\langle \cdot,\cdot\rangle_{s_2+M-t_1}$, the operator $S^*:T^p_\infty(\Bn)\to T^p_\infty(\Bn)$
$$S^*g(z)= (1-|z|^2)^{t_2}\int_{\Bn}\frac{(1-|u|^2)^{s_2+M-t_1}}{|1-\langle z,u\rangle|^{1+n+s_2+M+t_2-t_1}}g(u)dV_n(u)$$
if the Banach space adjoint of
$$Sg(z)=\int_{\Bn}\frac{(1-|u|^2)^{s_2+M-t_1+t_2}}{|1-\langle z,u\rangle|^{1+n+s_2+M+t_2-t_1}}g(u)dV_n(u)$$
acting on $T^{p'}_{1,s_2+M-t_1-n}(\Bn)$. Now, for large positive $\lambda$, we have
\begin{align*}
&\int_{\Gamma(\zeta)}|Sg(z)|(1-|z|^2)^{s_2+M-t_1-n}dV_n(z)\\
\lesssim &\int_{\Gamma(\zeta)}(1-|z|^2)^{s_2+M-t_1-n}\int_{\Bn}\frac{(1-|u|^2)^{s_2+M-t_1+t_2}}{|1-\langle z,u\rangle|^{1+n+s_2+M+t_2-t_1}}|g(u)|dV_n(u)\\
\lesssim &\int_{\Bn} \frac{(1-|z|^2)^{\lambda+s_2+M-t_1-n}}{|1-\langle z,\zeta\rangle|^\lambda}\left(\int_{\Bn}\frac{(1-|u|^2)^{s_2+M-t_1+t_2}}{|1-\langle z,u\rangle|^{1+n+s_2+M+t_2-t_1}}|g(u)|dV_n(u)\right)dV_n(z)\\
\lesssim &\int_{\Bn}\left(\frac{(1-|u|^2)}{|1-\langle u,\zeta\rangle|}\right)^{t_2+n}|g(u)|(1-|u|^2)^{s_2+M-t_1-n}dV_n(z).
\end{align*}
Since $p'>1$, we can use Lemma \ref{Gamma} to conclude that $S$ and its adjoint $S^*$ are bounded. Returning to the estimate
$$|R^{s_2,t_2}f(z)|\lesssim \int_{\Bn}\frac{(1-|w|^2)^{s_2+M-t_1}}{|1-\langle z,w\rangle|^{1+n+s_2+M+t_2-t_1}}(1-|w|^2)^{t_1} |R^{s_1,t_1}f(w)|dV_n(w),$$
we see that multiplying both sides by $(1-|z|^2)^{t_2}$ and noting that then the right-hand-side depicts a function in $T^p_\infty(\Bn)$, we arrive at the desired conclusion. The proof is complete.
\end{proof}

It is perhaps also noteworthy that the space $\mathcal{BT}^p(\Bn)$ lies strictly between the Hardy space $H^p(\Bn)$ and every weighted Bergman space $A^p_\beta(\Bn)$, where $\beta>-1$. This could be compared with the small weighted Bergman spaces, studied in the monograph \cite{PRMEM}. \\

\section{Duality: the case $1<p<\infty$, $1\leq q<\infty$}

\noindent This section is devoted to the proof of our first main duality result. Since the duality of the measurable tent spaces $T^p_{q,\alpha}(\Bn)$ and $T^{p'}_{q',\alpha}(\Bn)$ is realized in terms of the pairing
$$\langle f,g\rangle_{n+\alpha}=\int_{\Bn}f(z)\overline{g(z)}(1-|z|^2)^{n+\alpha}dV_n(z),$$
a natural course of action will be to study the boundedness of $P_{n+\alpha}:T^p_{q,\alpha}(\Bn)\to T^p_{q,\alpha}(\Bn)$. We will first show that this operator is not bounded when $q=1$ or $q=\infty$.

\begin{proposition}\label{notbdd}
Let $0<p<\infty$ and $\alpha>-n-1$. Then $P_{n+\alpha}:T^p_\infty(\Bn)\to T^p_\infty(\Bn)$ is not bounded. If $1<p<\infty$, then $P_{n+\alpha}:T^p_{1,\alpha}(\Bn)\to T^p_{1,\alpha}(\Bn)$ is not bounded.
\end{proposition}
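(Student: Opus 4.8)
The plan is to treat the two statements separately: handle the maximal ($q=\infty$) case by a direct structural argument, and then deduce the area ($q=1$) case from it by duality, exploiting that $P_{n+\alpha}$ is self-adjoint for the pairing $\langle\cdot,\cdot\rangle_{n+\alpha}$.

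For the case $P_{n+\alpha}:T^p_\infty(\Bn)\to T^p_\infty(\Bn)$ I would show that the projection fails to even map the space into itself. First I record two elementary facts: that $L^\infty(\Bn)\subset T^p_\infty(\Bn)$ with $\|f\|_{T^p_\infty(\Bn)}\le\|f\|_{L^\infty(\Bn)}$ (immediate from $\sigma(\Sn)=1$), and that a holomorphic function lies in $T^p_\infty(\Bn)$ if and only if it lies in $H^p(\Bn)$, since $\mathcal{HT}^p_{\infty,\alpha}(\Bn)=H^p(\Bn)$. Because $\alpha>-n-1$ gives $n+\alpha>-1$, we have $P_{n+\alpha}(L^\infty(\Bn))=\mathcal{B}(\Bn)$. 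Choosing a Bloch function $\phi\in\mathcal{B}(\Bn)\setminus H^p(\Bn)$ — such functions exist for every $0<p<\infty$, e.g. via lacunary series built from Ryll--Wojtaszczyk polynomials — and writing $\phi=P_{n+\alpha}h$ with $h\in L^\infty(\Bn)$, we obtain $h\in T^p_\infty(\Bn)$ whose image $P_{n+\alpha}h=\phi$ is holomorphic but not in $H^p(\Bn)$, hence not in $T^p_\infty(\Bn)$. This rules out boundedness for all $0<p<\infty$.

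For the case $1<p<\infty$ and $P_{n+\alpha}:T^p_{1,\alpha}(\Bn)\to T^p_{1,\alpha}(\Bn)$ I would argue by duality. The operator $P_{n+\alpha}$ is self-adjoint for $\langle\cdot,\cdot\rangle_{n+\alpha}$, being the orthogonal projection of $L^2_{n+\alpha}(\Bn)$: the kernel satisfies $\overline{K(z,u)}=K(u,z)$, and Fubini gives $\langle P_{n+\alpha}f,g\rangle_{n+\alpha}=\langle f,P_{n+\alpha}g\rangle_{n+\alpha}$. By Theorem~\ref{dual} (applicable since $1<p<\infty$ and $q=1$, so $p+q\neq2$), the pairing $\langle\cdot,\cdot\rangle_{n+\alpha}$ identifies $(T^p_{1,\alpha}(\Bn))^*$ with $T^{p'}_\infty(\Bn)$. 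Suppose $P_{n+\alpha}$ were bounded on $T^p_{1,\alpha}(\Bn)$. Then for each $g\in T^{p'}_\infty(\Bn)$ the functional $f\mapsto\langle P_{n+\alpha}f,g\rangle_{n+\alpha}$ is bounded on $T^p_{1,\alpha}(\Bn)$ with norm $\lesssim\|g\|_{T^{p'}_\infty(\Bn)}$; by self-adjointness its representing element is $P_{n+\alpha}g$, which therefore lies in $T^{p'}_\infty(\Bn)$ with controlled norm. Thus $P_{n+\alpha}$ would be bounded on $T^{p'}_\infty(\Bn)$ for $p'\in(1,\infty)$, contradicting the first part.

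The main obstacle, and the only genuinely nontrivial input, is the first part: producing an element whose image escapes $T^p_\infty(\Bn)$, which rests on $\mathcal{B}(\Bn)\not\subset H^p(\Bn)$. The naive single-bump test function $f_a(u)=(1-\langle a,u\rangle)^{1+2n+\alpha}/|1-\langle a,u\rangle|^{1+2n+\alpha}$ is not enough: the Forelli--Rudin critical case ($s=0$) makes $P_{n+\alpha}f_a(a)$ grow only like $\log\frac{1}{1-|a|^2}$, and weighting by $\sigma(I(a))\asymp(1-|a|^2)^n$ keeps $\|P_{n+\alpha}f_a\|_{T^p_\infty(\Bn)}$ bounded. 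One therefore needs either the global Bloch construction or a superposition of such bumps with random signs, and the cleaner route is the Bloch one. A secondary technical point in the second part is the identification of the representing element with $P_{n+\alpha}g$: this is justified by testing against $f$ compactly supported in $\Bn$, where Fubini applies unproblematically, and then extending by density.
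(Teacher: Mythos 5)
Your proposal is correct and follows essentially the same route as the paper: for the $q=\infty$ case, use $L^\infty(\Bn)\subset T^p_\infty(\Bn)$ and $P_{n+\alpha}(L^\infty(\Bn))=\mathcal{B}(\Bn)$ together with $\mathcal{HT}^p_{\infty,\alpha}(\Bn)=H^p(\Bn)$ and $\mathcal{B}(\Bn)\not\subset H^p(\Bn)$, and then deduce the $q=1$ case by duality via the self-adjointness of $P_{n+\alpha}$ and Theorem~\ref{dual}. Your write-up merely makes explicit two points the paper leaves implicit (the construction of a Bloch function outside $H^p(\Bn)$ and the Fubini/density justification in the duality step), which is fine.
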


\begin{proof}
We will show that $P_{n+\alpha}:T^p_\infty(\Bn)\to T^p_\infty(\Bn)$ is not bounded, when $0<p<\infty$. When $1<p<\infty$, the other case follows from duality. 

It is clear that $L^\infty(\Bn)\subset T^p_{\infty,\alpha}(\Bn)$. Therefore, as it is well-known that $P_{n,\alpha}(L^\infty(\Bn))=\mathcal{B}(\Bn)$, we have
$$\mathcal{B}(\Bn)\subset P_{n,\alpha}(T^p_\infty(\Bn)).$$
Now, since $\mathcal{HT}^p_\infty(\Bn)=H^p(\Bn)$ and $H^p(\Bn)$ does not contain the Bloch space $\mathcal{B}(\Bn)$, we are done.
\end{proof}

Next, we will show that the projection is indeed bounded if $1<p,q<\infty$. In fact, we will show that the maximal Bergman projection

$$f\mapsto c(n,n+\alpha)\int_{\Bn}\frac{(1-|u|^2)^{n+\alpha}}{|1-\langle z,u\rangle|^{1+2n+\alpha}}f(u)dV_n(u)$$ is bounded.

\begin{proposition}\label{projection}
Let $1<p,q<\infty$ and $\alpha>-n-1$. Then, $P_{n+\alpha}:T^p_{q,\alpha}(\Bn)\to \mathcal{HT}^p_{q,\alpha}(\Bn)$ is bounded and onto. Moreover, if $1< p<\infty$, then $P_{n+\alpha}:T^p_\infty(\Bn)\to \mathcal{BT}^p(\Bn)$ is bounded and onto.
\end{proposition}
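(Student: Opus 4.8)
The plan is to prove boundedness by dominating $|P_{n+\alpha}f|$ pointwise by the maximal Bergman projection
$$P^{+}_{n+\alpha}f(z)=c(n,n+\alpha)\int_{\Bn}\frac{(1-|u|^2)^{n+\alpha}}{|1-\langle z,u\rangle|^{1+2n+\alpha}}|f(u)|\,dV_n(u),$$
so that it suffices to show $P^{+}_{n+\alpha}:T^p_{q,\alpha}(\Bn)\to T^p_{q,\alpha}(\Bn)$; since the range of $P_{n+\alpha}$ consists of holomorphic functions, this places it in $\mathcal{HT}^p_{q,\alpha}(\Bn)$. Surjectivity is then automatic: as $1<p,q<\infty$ we have $\mathcal{HT}^p_{q,\alpha}(\Bn)\subset \mathcal{HT}^1_{1,\alpha}(\Bn)=A^1_{n+\alpha}(\Bn)$, so the Bergman reproducing formula gives $P_{n+\alpha}f=f$ for every $f\in\mathcal{HT}^p_{q,\alpha}(\Bn)$, and $P_{n+\alpha}$ is the identity on its target.

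For the boundedness I would run the cone estimate exactly as in the proof of Theorem \ref{HTchar}. Applying H\"older's inequality with exponents $q',q$ to split the kernel as $(1-|u|^2)^{x}|1-\langle z,u\rangle|^{-y}$ times the remaining factor carrying $|f|$, estimating the first factor by Lemma \ref{IctBn}, and then integrating $(1-|z|^2)^\alpha|P^{+}_{n+\alpha}f(z)|^q$ over $\Gamma(\zeta)$ with the help of Fubini's theorem and Lemma \ref{FRgeneral}\eqref{FR1}, one is led to a quantity of the form $\int_{\Bn}\bigl((1-|u|^2)/|1-\langle u,\zeta\rangle|\bigr)^{\lambda_\Gamma}|f(u)|^q(1-|u|^2)^\alpha\,dV_n(u)$, where a direct computation shows that the surviving exponent is $\lambda_\Gamma=q(n+\alpha-x)-\alpha$, independently of $y$ and of the auxiliary power $\lambda$ introduced by the cone. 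Lemma \ref{Gamma}, with outer exponent $p/q$, then closes the argument provided $\lambda_\Gamma>n\max(1,q/p)$. Since $x$ is only constrained by $xq'>-1$, we may push $\lambda_\Gamma$ up to just below $qn+(q-1)(\alpha+1)$.

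The main obstacle is precisely this last inequality. When $q\le p$ the threshold is $n$, and the requirement reduces to $(q-1)(n+1+\alpha)>0$, which holds because $q>1$ and $\alpha>-n-1$; hence the direct estimate succeeds for all $q\le p$ (the diagonal case $q=p$ being the classical weighted Bergman projection on $A^p_{n+\alpha}(\Bn)$). When $q>p$, however, the threshold $nq/p$ exceeds the best attainable $\lambda_\Gamma$, and the direct argument genuinely breaks down. Here I would invoke duality: $P_{n+\alpha}$ is self-adjoint for the pairing $\langle\cdot,\cdot\rangle_{n+\alpha}$ (the kernel $c(n,n+\alpha)(1-\langle z,u\rangle)^{-(1+2n+\alpha)}$ is conjugate-symmetric), and by Theorem \ref{dual} one has $(T^{p'}_{q',\alpha}(\Bn))^*\sim T^p_{q,\alpha}(\Bn)$. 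Testing against $g\in T^{p'}_{q',\alpha}(\Bn)$ gives $\langle P_{n+\alpha}f,g\rangle_{n+\alpha}=\langle f,P_{n+\alpha}g\rangle_{n+\alpha}$, so that $\|P_{n+\alpha}f\|_{T^p_{q,\alpha}(\Bn)}\lesssim\|f\|_{T^p_{q,\alpha}(\Bn)}\,\|P_{n+\alpha}\|_{T^{p'}_{q',\alpha}(\Bn)\to T^{p'}_{q',\alpha}(\Bn)}$. Since $q>p$ forces $q'<p'$, the dual pair lies in the already-settled regime, and boundedness transfers.

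For the second statement I fix $s=n+\alpha$ in the definition of $\mathcal{BT}^p(\Bn)$, which is legitimate by Theorem \ref{BTchar} and since $2n+\alpha>-1$. Then $P_{n+\alpha}f$ is represented by the kernel $(1-\langle z,u\rangle)^{-(1+2n+\alpha)}$, and $R^{n+\alpha,t}$ acts by raising the exponent to $1+2n+\alpha+t$, whence $(1-|z|^2)^tR^{n+\alpha,t}P_{n+\alpha}f$ is, up to a constant, the operator with kernel $(1-|z|^2)^t(1-|u|^2)^{n+\alpha}|1-\langle z,u\rangle|^{-(1+2n+\alpha+t)}$ applied to $f$. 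As in the proof of Theorem \ref{BTchar}, this maximal operator is the adjoint with respect to $\langle\cdot,\cdot\rangle_{n+\alpha}$ of $Sg(z)=\int_{\Bn}(1-|u|^2)^{n+\alpha+t}|1-\langle z,u\rangle|^{-(1+2n+\alpha+t)}g(u)\,dV_n(u)$ acting on $T^{p'}_{1,\alpha}(\Bn)$; the corresponding cone estimate yields $\int_{\Gamma(\zeta)}|Sg|(1-|z|^2)^\alpha\,dV_n\lesssim\int_{\Bn}\bigl((1-|u|^2)/|1-\langle u,\zeta\rangle|\bigr)^{t+n}|g|(1-|u|^2)^\alpha\,dV_n$, and Lemma \ref{Gamma} applies because $p'>1$ and $t+n>n$. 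Hence $S$ is bounded on $T^{p'}_{1,\alpha}(\Bn)$ and its adjoint is bounded on $T^p_\infty(\Bn)$, giving $P_{n+\alpha}f\in\mathcal{BT}^p(\Bn)$ with the required norm control. Finally, surjectivity follows from the reproducing identity \eqref{FracInt}: writing $f=R_{n+\alpha,t}R^{n+\alpha,t}f$ produces $f=P_{n+\alpha}\phi$ with $\phi=c\,(1-|\cdot|^2)^tR^{n+\alpha,t}f$, and $\phi\in T^p_\infty(\Bn)$ precisely because $f\in\mathcal{BT}^p(\Bn)$.
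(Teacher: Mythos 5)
Your proposal is correct and follows essentially the same route as the paper: a H\"older/Forelli--Rudin cone estimate closed by Lemma \ref{Gamma} when $q\le p$, self-adjointness plus measurable tent-space duality (Theorem \ref{dual}) when $q>p$, and, for the second statement, identifying $(1-|\cdot|^2)^tR^{n+\alpha,t}P_{n+\alpha}$ as the adjoint of a Bergman-type projection acting on $T^{p'}_{1,\alpha}(\Bn)$, with surjectivity via \eqref{FracInt}. One side remark is overstated --- for $q>p$ the direct estimate fails only for $\alpha$ near $-n-1$ (when $\alpha\ge -1$ your attainable $\lambda_\Gamma<qn+(q-1)(\alpha+1)$ already exceeds the threshold $nq/p$) --- but since you fall back on duality for all $q>p$, exactly as the paper does, this does not affect the validity of the proof.
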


\begin{proof}
We will first deal with the case $1<q<\infty$. Let us take $f \in T^p_{q,\alpha}(\Bn)$. Since, $1<q<\infty$, we have by H\"older's inequality and Lemma \ref{IctBn}
\begin{align*}
|P_{n+\alpha}f(z)|^q&\lesssim \left(\int_ {\Bn} \frac{(1-|u|^2)^{n+\alpha-\beta q'}}{|1-\langle u,z\rangle|^{1+2n+\alpha}}dV_n(u)\right)^{q'/q}\int_{\Bn} \frac{(1-|u|^2)^{n+\alpha+\beta q}}{|1-\langle u,z\rangle|^{1+2n+\alpha}}|f(u)|^q dV_n(u)\\
&\lesssim (1-|z|^2)^{-q\beta}\int_{\Bn} \frac{(1-|u|^2)^{n+\alpha+\beta q}}{|1-\langle u,z\rangle|^{1+2n+\alpha}}|f(u)|^q dV_n(u),
\end{align*}
whenever $\beta$ is positive and
\begin{equation}\label{cond1}
\beta q'< n+1+\alpha.
\end{equation}
Note that if $z \in \Gamma(\zeta)$ and $\zeta \in \Sn$, then $1-|z|^2\asymp |1-\langle z,\zeta\rangle|$. Therefore, for any $\lambda>0$, we have
\begin{align*}
&\int_{\Sn} \left( \int_{\Gamma(\zeta)} |P_{n+\alpha}f(z)|^q (1-|z|^2)^\alpha dV_n(z)\right)^{p/q}d\sigma(\zeta)\\
\lesssim&\int_{\Sn} \left( \int_{\Gamma(\zeta)} \left(\frac{(1-|z|^2)}{|1-\langle z,\zeta\rangle|}\right)^{\lambda}|P_{n+\alpha}f(z)|^q (1-|z|^2)^\alpha dV_n(z)\right)^{p/q}d\sigma(\zeta)\\
\lesssim&\int_{\Sn} \left( \int_{\Bn} \left(\frac{(1-|z|^2)}{|1-\langle z,\zeta\rangle|}\right)^{\lambda}|P_{n+\alpha}f(z)|^q (1-|z|^2)^\alpha dV_n(z)\right)^{p/q}d\sigma(\zeta)\\
\lesssim& \int_{\Sn} \left( \int_{\Bn} \frac{(1-|z|^2)^{\lambda+\alpha-q\beta}}{|1-\langle z,\zeta\rangle|^\lambda}\left(\int_{\Bn} \frac{(1-|u|^2)^{n+\alpha+\beta q}}{|1-\langle u,z\rangle|^{1+2n+\alpha}}|f(u)|^q dV_n(u)\right) dV_n(z) \right)^{p/q}d\sigma(\zeta).
\end{align*}
Note that this is the (almost) trivial counterpart of Lemma \ref{Gamma}. 

Now, looking at the assumptions on the parameters of Lemma \ref{FRgeneral}, we see that since $\alpha>-n-1$, then by making $\lambda$ large and requiring that
\begin{equation}\label{cond2}
\beta q < n+1+\alpha,
\end{equation}
we are in position to estimate
\begin{align*}
&\int_{\Sn} \left( \int_{\Bn} \frac{(1-|z|^2)^{\lambda+\alpha-q\beta}}{|1-\langle z,\zeta\rangle|^\lambda}\left(\int_{\Bn} \frac{(1-|u|^2)^{n+\alpha+\beta q}}{|1-\langle u,z\rangle|^{1+2n+\alpha}}|f(u)|^q dV_n(u)\right) dV_n(z) \right)^{p/q}d\sigma(\zeta)\\
\lesssim&\int_{\Sn} \left( \int_{\Bn} \frac{(1-|u|^2)^{n+\beta q}}{|1-\langle \zeta,u\rangle|^{n+\beta q}} (1-|u|^2)^\alpha |f(u)|^q dV_n(u) \right)^{p/q} d\sigma(\zeta),
\end{align*}
where we have used Fubini's theorem to change the order of integration. 

Next, we want to use Lemma \ref{Gamma} for the measure
$$d\mu(z)=(1-|u|^2)^\alpha |f(u)|^q dV_n(u),$$
with $s=p/q$. In order to be able to do this, we must make sure that $$n+\beta q>n\max(1,q/p),$$ where the only problem might occur when $q>p$, and the real requirement would therefore be
\begin{equation}\label{cond3}
n+\beta q > nq/p.
\end{equation}
To proceed with the proof, we now assume that $p\geq q$. Since $\beta>0$, we can continue with Lemma \ref{Gamma}, to finally arrive at
\begin{equation*}
\|P_{n+\alpha}f\|_{\mathcal{HT}^p_{q,\alpha}(\Bn)}^p \lesssim \int_{\Sn} \left( \int_{\Gamma(\zeta)}  |f(u)|^q (1-|u|^2)^\alpha dV_n(u) \right)^{p/q} d\sigma(\zeta)=\|f\|_{T^p_{q,\alpha}(\Bn)}^p.
\end{equation*}
Note that the conditions \eqref{cond1} and \eqref{cond2} can be achieved by a small enough positive $\beta$, say $\beta=1/qq'$. Since $\beta$ is positive, the condition $n+\beta q>n\max(1,q/p)$ is trivially met.

So far we have shown that $P_{n+\alpha}:T^p_{q,\alpha}(\Bn)\to T^p_{q,\alpha}(\Bn)$ is bounded, whenever $p\geq q$. Next, recall that under the dual pairing
$$\langle f,g\rangle_{n+\alpha}=\int_{\Bn} f(z)\overline{g(z)}(1-|z|^2)^{n+\alpha}dV_n(z)$$
the operator $P_{n+\alpha}$ is self-adjoint. Therefore, $P_{n+\alpha}:T^p_{q,\alpha}(\Bn)\to T^p_{q,\alpha}(\Bn)$ is bounded, if and only if its adjoint operator $P_{n+\alpha}:T^{p'}_{q',\alpha}(\Bn)\to T^{p'}_{q',\alpha}(\Bn)$ is bounded. If now $q>p$, then $q'\leq p'$, and the boundedness of $P_{n+\alpha}:T^p_{q,\alpha}(\Bn)\to T^p_{q,\alpha}(\Bn)$ follows by duality.

We finally remark that since $\mathcal{HT}^p_{q,\alpha}(\Bn)\subset T^p_{q,\alpha}(\Bn)$, the projective operator $$P_{n+\alpha}:T^p_{q,\alpha}(\Bn)\to \mathcal{HT}^p_{q,\alpha}(\Bn)$$ is clearly onto. \\

Let us now turn our attention to the second case, so suppose that $q=\infty$. By Theorem \ref{BTchar}, it will be enough to show that if $f \in T^p_\infty(\Bn)$, then
$$\|(1-|\cdot|^2)^t R^{n+\alpha,t}P_{n+\alpha}f\|_{T^p_{\infty}(\Bn)}\lesssim \|f\|_{T^p_\infty(\Bn)}.$$

We note that the map
$$f\mapsto (1-|\cdot|^2)^t R^{n+\alpha,t}P_{n+\alpha}f$$ acting on $T^{p}_\infty(\Bn)$ is the adjoint in $\langle \cdot,\cdot \rangle_{n+\alpha}$ of 
$$f\mapsto P_{n+\alpha+t}f$$ acting on $T^{p'}_{1,\alpha}(\Bn)$. So, by duality, in order to prove the boundedness claim, it will be enough to show that
$$P_{n+\alpha+t}:T^{p'}_{1,\alpha}(\Bn)\to T^{p'}_{1,\alpha}(\Bn)$$
is bounded.

Let $f \in T^{p'}_{1,\alpha}(\Bn)$. As before, we estimate by using Lemma \ref{Gamma}, Lemma \ref{FRgeneral} and Fubini's theorem for a sufficiently large $\lambda>0$
\begin{align*}
\|P_{n+\alpha+t}f\|_{T^{p'}_{1,\alpha}(\Bn)}^{p'}&\lesssim \int_{\Sn} \left(\int_{\Gamma(\zeta)}(1-|z|^2)^\alpha \int_{\Bn} \frac{(1-|u|^2)^{n+\alpha+t}}{|1-\langle u,z\rangle|^{1+2n+\alpha+t}}|f(u)|dV_n(u)\right)^{p'} d\sigma(\zeta)\\
&\lesssim \int_{\Sn} \left(\int_{\Bn} \frac{(1-|z|^2)^{\lambda+\alpha}}{|1-\langle z,\zeta\rangle|^\lambda} \int_{\Bn} \frac{(1-|u|^2)^{n+\alpha+t}}{|1-\langle u,z\rangle|^{1+2n+\alpha+t}}|f(u)|dV_n(u)\right)^{p'} d\sigma(\zeta)\\
&\lesssim \int_{\Sn} \left(\int_{\Bn} \frac{(1-|u|^2)^{n+t}}{|1-\langle u,\zeta\rangle|^{n+t}}(1-|u|^2)^\alpha |f(u)|dV_n(u)\right)^{p'} d\sigma(\zeta)\\
&\lesssim \int_{\Sn} \left(\int_{\Gamma(\zeta)}(1-|u|^2)^\alpha |f(u)|dV_n(u)\right)^{p'} d\sigma(\zeta)\\
&=\|f\|_{T^{p'}_{1,\alpha}(\Bn)}.
\end{align*}
Note that since $p'> 1$, the requirements of Lemma \ref{Gamma} are easily satisfied. In fact this proves that
$$f \mapsto (1-|\cdot|^2)^t \int_{\Bn}\frac{(1-|u|^2)^{n+\alpha}}{|1-\langle \cdot,u\rangle|^{1+2n+\alpha+t}}dV_n(u)$$ is bounded $T^p_\infty(\Bn)\to T^p_\infty(\Bn)$ for every $t>0$.

We have thus shown that for any $1<p<\infty$, the operator $P_{n+\alpha}:T^p_\infty(\Bn)\to \mathcal{BT}^p(\Bn)$ is bounded. Finally, we will show that is also onto. This is not as trivial as in the previous cases, since $\mathcal{BT}^p(\Bn)$ is not a subspace of $T^p_\infty(\Bn)$, as is evident from Proposition \ref{notbdd}. Let us therefore take $g \in \mathcal{BT}^p(\Bn)$, so that by definition, if
$$G_t(z)=(1-|z|^2)^t R^{n+\alpha,t}g(z),$$ then $G_t \in T^p_\infty(\Bn)$. By formula \eqref{FracInt} and the standard properties of the fractional differential operators, we have
$$\frac{c(n,n+\alpha+t)}{c(n,n+\alpha)}P_{n+\alpha} G_t= R_{n+\alpha,t}R^{n+\alpha,t} g=g,$$
and we have shown the surjectivity.
\end{proof}

Before we prove duality, we will need a standard approximation result for the holomorphic tent spaces. Let $r \in (0,1)$ and define the dilation of $f$ by $f_r(z)=f(rz)$.

\begin{lemma}\label{approx}
Let $0<p<\infty$, $0<q\leq\infty$, $\alpha>-n-1$ and $f \in \mathcal{HT}^p_{q,\alpha}(\Bn)$. Then $f_r\to f$ in norm as $r\to 1^-$ and $\|f_r\|_{\mathcal{HT}^p_{q,\alpha}(\Bn)}\lesssim \|f\|_{\mathcal{HT}^p_{q,\alpha}(\Bn)}$.
\end{lemma}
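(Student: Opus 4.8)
The plan is to prove the two assertions separately, and in both to use Theorem \ref{HTchar} to trade the weight $\alpha$ for a strictly positive one. The crucial observation is that dilation commutes with the fractional derivative: since both $R^{s,t}$ and $f\mapsto f_r$ act diagonally on the homogeneous expansion of a holomorphic function, one has $R^{s,t}(f_r)=(R^{s,t}f)_r$. Fixing $t>0$ so large that $\beta:=\alpha+qt>0$ (which also secures the hypothesis $qt+n+1+\alpha>0$ of Theorem \ref{HTchar}), we get $\|f_r\|_{\mathcal{HT}^p_{q,\alpha}(\Bn)}\asymp\|(R^{s,t}f)_r\|_{\mathcal{HT}^p_{q,\beta}(\Bn)}$, and, once the uniform bound is known so that $f_r-f\in\mathcal{HT}^p_{q,\alpha}(\Bn)$, the same identity applies to $f_r-f$. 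Thus it suffices to treat $g:=R^{s,t}f\in\mathcal{HT}^p_{q,\beta}(\Bn)$ with $\beta>0$. (The case $q=\infty$ is $H^p(\Bn)$, where both claims are classical; alternatively the arguments below go through verbatim with the area integral replaced by the non-tangential supremum.)

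For the uniform bound with $\beta>0$ I would split according to the size of $r$. For $r\ge 1/2$, change variables $w=rz$ in the area integral: a direct computation shows that $z\in\Gamma_\gamma(\zeta)$ forces $w\in\Gamma_{\gamma'}(\zeta)$ with $\gamma'=2(1+\gamma)$, while $1-|z|^2=(r^2-|w|^2)/r^2\le 4(1-|w|^2)$ and $r^{-2n}\le 4^n$. Here the sign $\beta>0$ is exactly what makes the weight inequality point the right way, and this is the one genuinely delicate point. Consequently the aperture-$\gamma$ area function of $g_r$ is dominated by the aperture-$\gamma'$ area function of $g$, and the aperture independence coming from Lemma \ref{Gamma} gives $\|g_r\|_{\mathcal{HT}^p_{q,\beta}(\Bn)}\lesssim\|g\|_{\mathcal{HT}^p_{q,\beta}(\Bn)}$ uniformly in $r\in[1/2,1)$. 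For $r<1/2$ the function $g(rz)$ only samples $g$ on $\overline{B(0,1/2)}$, so it is enough to combine the elementary point evaluation estimate $\sup_{|w|\le 1/2}|g(w)|\lesssim\|g\|_{\mathcal{HT}^p_{q,\beta}(\Bn)}$ (obtained from plurisubharmonicity of $|g|^q$, the fact that $B(0,3/4)\subset\Gamma_{10}(\zeta)$ for every $\zeta$, and passing from an infimum over $\zeta\in\Sn$ to an $L^p(\Sn)$ average) with the finiteness of $\int_{\Gamma(\zeta)}(1-|z|^2)^\beta\,dV_n$ for $\beta>-n-1$.

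For the convergence I would fix a truncation radius $\rho\in(0,1)$ and, letting $r\to1^-$ only along $r\ge\rho$, split the inner integral over $\Gamma(\zeta)$ into the core $\{|z|\le\rho\}$ and the tail $\{|z|>\rho\}$. On the tail the change of variables $w=rz$ (legitimate since $\beta>0$) bounds $\int_{\Gamma(\zeta)\cap\{|z|>\rho\}}|g(rz)|^q(1-|z|^2)^\beta\,dV_n$ by a constant times $\int_{\Gamma_{\gamma'}(\zeta)\cap\{|w|>\rho^2\}}|g|^q(1-|w|^2)^\beta\,dV_n$; this and the analogous tail of $g$ itself are both at most $\int_{\Gamma_{\gamma'}(\zeta)}|g|^q(1-|w|^2)^\beta\,dV_n$, whose $p/q$-th power is integrable over $\Sn$. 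Hence dominated convergence on $\Sn$ makes the $L^p(\Sn)$ contribution of the tail tend to $0$ as $\rho\to1$, uniformly for $r\ge\rho$. On the core $g_r\to g$ uniformly, so that contribution is at most $\sup_{|z|\le\rho}|g(rz)-g(z)|^q\cdot\int_{\Gamma(\zeta)}(1-|z|^2)^\beta\,dV_n$ and tends to $0$ as $r\to1$ for each fixed $\rho$. Combining the two pieces through the quasi-triangle inequality (with the exponent $\min(1,p,q)$), first choosing $\rho$ close to $1$ and then $r$ close to $1$, yields $\|g_r-g\|_{\mathcal{HT}^p_{q,\beta}(\Bn)}\to0$, i.e. $\|f_r-f\|_{\mathcal{HT}^p_{q,\alpha}(\Bn)}\to0$.

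The main obstacle, and the whole reason for routing everything through $R^{s,t}$, is precisely the weight: under $w=rz$ the factor $1-|z|^2$ is comparable to $1-|w|^2$ from one side only, and for $\alpha<0$ the inequality points the wrong way, so a naive change of variables fails. Reducing to $\beta>0$ repairs this. The only other point needing care is to send $r\to1$ along $r\ge\rho$ so that the tail estimate remains uniform in $r$; everything else is routine.
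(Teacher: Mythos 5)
Your proof is correct, and its skeleton matches the paper's: both reduce the general weight to a convenient one via Theorem \ref{HTchar} together with the observation that dilation and $R^{s,t}$ are coefficient multipliers and hence commute, and both obtain the uniform norm bound from the containment $r\Gamma_\gamma(\zeta)\subset\Gamma_{\gamma'}(\zeta)$ plus a change of variables in the cone. The differences are in execution. The paper normalizes to $\alpha=0$ (taking $t=-\alpha/q$), which removes the weight entirely and allows it to keep the \emph{same} aperture (for $\gamma\ge 2$ one has $r\Gamma_\gamma(\zeta)\subset\Gamma_\gamma(\zeta)$), whereas you normalize to $\beta=\alpha+qt>0$ and pay with an enlarged aperture plus aperture independence; both are fine. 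The more substantive difference is the convergence step: the paper applies dominated convergence twice, asserting that $|f-f_r|^q$ admits an $r$-independent integrable majorant on each cone $\Gamma(\zeta)$ -- a point it does not actually exhibit (the natural candidates involve a radial or non-tangential maximal function whose integrability over the cone is not obvious). Your core/tail truncation, with the tail of $g_r$ dominated uniformly for $r\ge\rho$ by the tail of $g$ over a larger cone and the tail contribution killed by dominated convergence on $\Sn$, sidesteps the need for any such majorant and is, if anything, more watertight than the paper at exactly this point. One small caveat: your parenthetical remark that for $q=\infty$ the same argument "goes through verbatim" with the non-tangential supremum is not right, since $\sup_{\Gamma(\zeta)\cap\{|z|>\rho\}}|f|$ tends to the boundary value $|f^*(\zeta)|$ rather than to $0$ as $\rho\to1^-$; this is harmless, because your primary route for $q=\infty$ (citing the classical $H^p$ result, exactly as the paper does) is the one that carries the case.
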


\begin{proof}
We note that if $q=\infty$, this is a classical result about Hardy spaces. So, we will assume that $0<q<\infty$. Of course, this result is also known for $q=2$.

Suppose first that $\alpha=0$. We may assume that the aperture $\gamma$ on $\Gamma(\zeta)=\Gamma_\gamma(\zeta)$ satisfies $\gamma\geq 2$. Then if $z \in \Gamma(\zeta)$ and $r \in (0,1)$, we have
\begin{align*}
|1-\langle rz,\zeta\rangle|&\leq (1-r)+r|1-\langle z,\zeta\rangle|<(1-r)+r\frac{\gamma}{2}(1-|z|^2)\\
&=\left(1-\frac{\gamma}{2}\right)(1-r)+\frac{\gamma}{2}(1-r|z|^2)\leq \frac{\gamma}{2}(1-r^2|z|^2).
\end{align*}

We discover that $rz \in \Gamma(\zeta)$, so $r\Gamma(\zeta)\subset \Gamma(\zeta)$. This allows us to use a change of variables to see that
$$\int_{\Gamma(\zeta)}|f_r(z)|^q dV_n(z)\lesssim \int_{r\Gamma(\zeta)}|f(z)|^q dV_n(z)\leq  \int_{\Gamma(\zeta)}|f(z)|^q dV_n(z).$$
This already gives the desired norm bound.

If $f \in \mathcal{HT}^p_{q,\alpha}(\Bn)$, it follows that for every $r$, the function
$$\left(\int_{\Gamma(\zeta)}|f_r(z)-f(z)|^q dV_n(z)\right)^{p/q}$$
belongs to $L^1(\Sn)$ and has an integrable majorant. For almost every $\zeta \in \Sn$, we have
$$\int_{\Gamma(\zeta)}|f(z)|^q dV_n(z)<\infty,$$
and for such $\zeta$ the function $|f-f_r|^q$ has an integrable majorant over $\Gamma(\zeta)$. Since $f_r\to f$ pointwise as $r\to 1^-$, we see by the Lebesgue dominated convergence theorem that for almost every $\zeta$
$$\int_{\Gamma(\zeta)}|f_r(z)-f(z)|^q dV_n(z)\to 0.$$
But now another use of dominated convergence shows us that $f_r\to f$ in $\mathcal{HT}^p_{q,\alpha}(\Bn)$ as $r\to 1^-$.

Suppose next that $\alpha>-n-1$, $\alpha\neq 0$. Let $s$ be chosen so that neither $s+n$ nor $s+n-\alpha/q$ is a negative integer. By Theorem \ref{HTchar}, we have for any holomorphic $f$
$$\|f\|_{\mathcal{HT}^p_{q,\alpha}(\Bn)}\asymp \|R^{s,-\alpha/q}f\|_{\mathcal{HT}^p_{q,0}(\Bn)}.$$

Now, since the dilation $f \mapsto f_r$ commutes with the fractional derivatives (both being coefficient multipliers), we have
$$R^{s,-\alpha/q}f_r=(R^{s,-\alpha/q}f)_r\to R^{s,-\alpha/q}f$$
as $r\to 1^-$. This proves the remaining case.
\end{proof}

\begin{coro}
Let $0<p<\infty$, $0<q\leq\infty$, $\alpha>-n-1$ and $f \in \mathcal{HT}^p_{q,\alpha}(\Bn)$. Polynomials are dense in $\mathcal{HT}^p_{q,\alpha}(\Bn)$. Moreover, these spaces are separable.
\end{coro}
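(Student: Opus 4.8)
The plan is to derive both claims from the approximation Lemma \ref{approx}, after first recording that bounded functions embed continuously into the tent space. The key observation is that $L^\infty(\Bn)\hookrightarrow \mathcal{HT}^p_{q,\alpha}(\Bn)$. For $q<\infty$ this follows from
$$\|h\|_{T^p_{q,\alpha}(\Bn)}^p \le \|h\|_{L^\infty(\Bn)}^p\int_{\Sn}\left(\int_{\Gamma(\zeta)}(1-|z|^2)^\alpha\,dV_n(z)\right)^{p/q}d\sigma(\zeta),$$
where, by rotation invariance of $(1-|z|^2)^\alpha\,dV_n$ and of the Kor\'anyi regions, the inner integral $C_0:=\int_{\Gamma(\zeta)}(1-|z|^2)^\alpha\,dV_n(z)$ does not depend on $\zeta$. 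This constant is finite precisely because $\alpha>-n-1$: applying \eqref{EqG} with $\varphi\equiv 1$ and $d\nu(z)=(1-|z|^2)^{n+\alpha}\,dV_n(z)$ yields $C_0\asymp\int_{\Bn}(1-|z|^2)^{n+\alpha}\,dV_n(z)<\infty$. Since $\sigma(\Sn)=1$, we obtain $\|h\|_{\mathcal{HT}^p_{q,\alpha}(\Bn)}\le C_0^{1/q}\|h\|_{L^\infty(\Bn)}$; the case $q=\infty$ is immediate, as the Kor\'anyi average of $|h|$ never exceeds $\|h\|_{L^\infty(\Bn)}$. In particular, uniform convergence on $\overline{\Bn}$ implies convergence in the tent quasinorm.

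With this embedding available, fix $f\in\mathcal{HT}^p_{q,\alpha}(\Bn)$ and $\eps>0$. By Lemma \ref{approx} we may pick $r\in(0,1)$ with $\|f-f_r\|_{\mathcal{HT}^p_{q,\alpha}(\Bn)}$ arbitrarily small. The dilation $f_r(z)=f(rz)$ extends holomorphically to the ball of radius $1/r>1$, hence is holomorphic on a neighbourhood of $\overline{\Bn}$, so its Taylor polynomials converge to it uniformly on $\overline{\Bn}$. By the embedding just established, these polynomials converge to $f_r$ in $\mathcal{HT}^p_{q,\alpha}(\Bn)$ as well. Combining the two approximations --- via the ordinary triangle inequality when $p,q\ge 1$, and via $\|a+b\|^{\min(p,q)}\le\|a\|^{\min(p,q)}+\|b\|^{\min(p,q)}$ for the metric otherwise --- produces a polynomial within $\eps$ of $f$, which proves density of the polynomials.

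Separability is then immediate: the polynomials whose coefficients lie in $\mathbb{Q}+i\mathbb{Q}$ form a countable family, and since adjusting the coefficients alters the supremum over $\overline{\Bn}$ --- and hence, by the embedding above, the tent quasinorm --- by an arbitrarily small amount, this family is dense among all polynomials, and therefore dense in $\mathcal{HT}^p_{q,\alpha}(\Bn)$. I do not anticipate any real difficulty here; the only points needing care are the bookkeeping of the quasi-triangle inequality when $p<1$ or $q<1$, and the verification that $C_0<\infty$, which is exactly the standing hypothesis $\alpha>-n-1$ read through \eqref{EqG}.
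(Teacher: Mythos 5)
Your proposal is correct and follows essentially the same route as the paper, which likewise deduces density from Lemma \ref{approx} together with uniform approximation of the dilations $f_r$ by their Taylor polynomials on $\overline{\Bn}$, and then separability from density. The only difference is that you spell out the implicit embedding $L^\infty(\Bn)\hookrightarrow T^p_{q,\alpha}(\Bn)$ via \eqref{EqG} and rotation invariance, and the quasi-triangle bookkeeping, which the paper treats as "clear."
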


\begin{proof}
This is clear, since the functions $f_r$ can be approximated by polynomials uniformly on $\overline{\Bn}$. The separability claim then easily follows from the density of polynomials.
\end{proof}

\begin{theorem}\label{duality}
Let $1<p,q<\infty$ and $\alpha>-n-1$. The dual of $\mathcal{HT}^p_{q,\alpha}(\Bn)$ equals $\mathcal{HT}^{p'}_{q',\alpha}(\Bn)$ under the pairing $\langle \cdot,\cdot\rangle_{n+\alpha}$. The dual of $\mathcal{HT}^p_{1,\alpha}(\Bn)$ can be identified with $\mathcal{BT}^{p'}(\Bn)$.
\end{theorem}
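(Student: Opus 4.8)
The plan is to treat both claims through the bounded Bergman projection $P_{n+\alpha}$ and the measurable tent space duality of Theorem \ref{dual}, exploiting that $P_{n+\alpha}$ is self-adjoint with respect to $\langle\cdot,\cdot\rangle_{n+\alpha}$ and that $P_{n+\alpha}f=f$ for holomorphic $f$ (every function in sight lies in some $A^1_\beta(\Bn)$, so the reproducing formula applies). The abstract mechanism is the same in spirit: $\mathcal{HT}^p_{q,\alpha}(\Bn)$ sits inside $T^p_{q,\alpha}(\Bn)$, its dual is computed by extending functionals to the ambient tent space via Hahn--Banach, representing them through Theorem \ref{dual}, and then pushing the representing datum back into the holomorphic world by applying $P_{n+\alpha}$.

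For the range $1<p,q<\infty$ I would first note that, by Proposition \ref{projection}, $P:=P_{n+\alpha}$ is a bounded projection of $T^p_{q,\alpha}(\Bn)$ onto $\mathcal{HT}^p_{q,\alpha}(\Bn)$. Given $L\in(\mathcal{HT}^p_{q,\alpha}(\Bn))^*$, extend it to $\tilde L$ on $T^p_{q,\alpha}(\Bn)$; since $1<p,q<\infty$ forces $p+q>2$, Theorem \ref{dual} yields $g\in T^{p'}_{q',\alpha}(\Bn)$ with $\tilde L=\langle\cdot,g\rangle_{n+\alpha}$. For holomorphic $f$ I then use $Pf=f$ and self-adjointness to write $L(f)=\langle f,g\rangle_{n+\alpha}=\langle Pf,g\rangle_{n+\alpha}=\langle f,Pg\rangle_{n+\alpha}$, where $Pg\in\mathcal{HT}^{p'}_{q',\alpha}(\Bn)$ by Proposition \ref{projection} applied to the exponents $p',q'$. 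Conversely, every $g\in\mathcal{HT}^{p'}_{q',\alpha}(\Bn)\subset T^{p'}_{q',\alpha}(\Bn)$ gives a bounded functional with $|\langle f,g\rangle_{n+\alpha}|\le\|f\|_{T^p_{q,\alpha}(\Bn)}\|g\|_{T^{p'}_{q',\alpha}(\Bn)}$. Injectivity of $g\mapsto\langle\cdot,g\rangle_{n+\alpha}$ follows by testing against monomials (using density of polynomials) so that all Taylor coefficients of $g$ vanish, and the norm equivalence follows either from the open mapping theorem or from tracking the constants through the chain above. The self-adjointness identity needs a Fubini justification, which is available because the \emph{maximal} Bergman projection is bounded on $T^p_{q,\alpha}(\Bn)$ (Proposition \ref{projection}), giving the required absolute convergence.

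The case $q=1$ is genuinely different, since by Proposition \ref{notbdd} the projection $P_{n+\alpha}$ is \emph{not} bounded on $T^p_{1,\alpha}(\Bn)$, so $\mathcal{HT}^p_{1,\alpha}(\Bn)$ is not complemented in the same way. The remedy is to work on the dual side: $(T^p_{1,\alpha}(\Bn))^*=T^{p'}_\infty(\Bn)$ (Theorem \ref{dual} with $q'=\infty$), and Proposition \ref{projection} guarantees $P_{n+\alpha}\colon T^{p'}_\infty(\Bn)\to\mathcal{BT}^{p'}(\Bn)$ is bounded and onto. For the representation, extend $L$ to $\tilde L=\langle\cdot,h\rangle_{n+\alpha}$ with $h\in T^{p'}_\infty(\Bn)$ and put $g=P_{n+\alpha}h\in\mathcal{BT}^{p'}(\Bn)$. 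Since $\mathcal{BT}^{p'}(\Bn)$ is not contained in $T^{p'}_\infty(\Bn)$, the pairing of $f\in\mathcal{HT}^p_{1,\alpha}(\Bn)$ with $g$ must be defined through dilations, $\langle f,g\rangle:=\lim_{r\to1^-}\langle f_r,g\rangle_{n+\alpha}$; for each fixed $r$ the integral converges absolutely because $f_r$ is bounded and $g\in A^1_{n+\alpha}(\Bn)$. The identity $\langle f_r,g\rangle_{n+\alpha}=\langle f_r,Ph\rangle_{n+\alpha}=\langle Pf_r,h\rangle_{n+\alpha}=\langle f_r,h\rangle_{n+\alpha}$, combined with $f_r\to f$ in $T^p_{1,\alpha}(\Bn)$ from Lemma \ref{approx} and $h\in(T^p_{1,\alpha}(\Bn))^*$, gives $L(f)=\langle f,h\rangle_{n+\alpha}$ and hence $L=\langle\cdot,g\rangle$. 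Conversely, lifting any $g\in\mathcal{BT}^{p'}(\Bn)$ to $h\in T^{p'}_\infty(\Bn)$ with $\|h\|_{T^{p'}_\infty(\Bn)}\lesssim\|g\|_{\mathcal{BT}^{p'}(\Bn)}$ (surjectivity plus the open mapping theorem), the same computation yields $|\langle f,g\rangle|\le\|f\|_{T^p_{1,\alpha}(\Bn)}\|h\|_{T^{p'}_\infty(\Bn)}$ uniformly in $r$, so the limit exists and defines a bounded functional. Injectivity again comes from testing against monomials, and the two-sided bound gives the norm equivalence.

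The step I expect to be the main obstacle is making the $\mathcal{HT}^p_{1,\alpha}(\Bn)$--$\mathcal{BT}^{p'}(\Bn)$ pairing rigorous: because $\mathcal{BT}^{p'}(\Bn)\not\subset T^{p'}_\infty(\Bn)$, there is no naive duality to invoke, and I must justify self-adjointness of $P_{n+\alpha}$ across three different pairings (nice $f_r$ against $g\in\mathcal{BT}^{p'}(\Bn)$, against $h\in T^{p'}_\infty(\Bn)$, and after moving $P$) by Fubini, and confirm that the dilation limit is independent of the chosen lift $h$. This is precisely where the norm-convergence-with-uniform-bound of dilations in Lemma \ref{approx} does the essential work.
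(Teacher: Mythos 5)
Your proposal is correct and follows essentially the same route as the paper: Hahn--Banach extension into $T^p_{q,\alpha}(\Bn)$, representation via Theorem \ref{dual}, projection of the representative by $P_{n+\alpha}$ using Proposition \ref{projection} together with its self-adjointness, and, for $q=1$, the dilation pairing justified by Lemma \ref{approx} with the open-mapping lift from $\mathcal{BT}^{p'}(\Bn)$ back to $T^{p'}_\infty(\Bn)$. The only cosmetic deviations are that the paper checks separation of points against Bergman kernels rather than monomials, and obtains the final norm bound in the $q=1$ case by pairing with $(1-|\cdot|^2)^t R^{n+\alpha,t}h_\tau$ instead of an abstract lift; neither changes the substance.
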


\begin{proof}
Let us first deal with $1<q<\infty$. First, by applying the formula \eqref{EqG} and H\"older's inequality two times, it is clear that every $g \in \mathcal{HT}^{p'}_{q',\alpha}(\Bn)$ induces an element in the dual of $\mathcal{HT}^p_{q,\alpha}(\Bn)$.

Now, let us take arbitrary $\tau \in (\mathcal{HT}^p_{q,\alpha}(\Bn))^*$. By, the Hahn-Banach extension theorem, it can be extended to $\widetilde{\tau} \in (T^p_{q,\alpha}(\Bn))^*$ without increasing its norm. By duality of the measurable tent spaces, there exists $g_\tau \in T^{p'}_{q',\alpha}(\Bn)$ so that if now $f \in \mathcal{HT}^p_{q,\alpha}(\Bn)$, we have
$$\tau(f)=\widetilde{\tau}(f)=\langle f,g_\tau \rangle_{n+\alpha}=\langle P_{n+\alpha}f,g_\tau \rangle_{n+\alpha}=\langle f,P_{n+\alpha}g_\tau \rangle_{n+\alpha}.$$

By Proposition \ref{projection}, $P_{n+\alpha}g_\tau=h_\tau \in \mathcal{HT}^{p'}_{q',\alpha}(\Bn)$, which shows that each functional in the dual of $\mathcal{HT}^p_{q,\alpha}(\Bn)$ can represented by an element of $\mathcal{HT}^{p'}_{q',\alpha}(\Bn)$. Moreover
\begin{align*}\|\tau\|_{(\mathcal{HT}^p_{q,\alpha}(\Bn))^*}\leq \|h_\tau\|_{\mathcal{HT}^{p'}_{q',\alpha}(\Bn)}&\leq \|P_{n+\alpha}\|_{T^{p'}_{q',\alpha}(\Bn)\to T^{p'}_{q',\alpha}(\Bn)}\|g_\tau\|_{T^{p'}_{q',\alpha}(\Bn)}\\
&=\|P_{n+\alpha}\|_{T^{p'}_{q',\alpha}(\Bn)\to T^{p'}_{q',\alpha}(\Bn)}\|\tau\|_{(\mathcal{HT}^p_{q,\alpha}(\Bn))^*},
\end{align*}
so we have the equivalence of norms with an explicit constant coming from the norm of the Bergman projection.

It remains to show that, under this duality, the space $\mathcal{HT}^p_{q,\alpha}(\Bn)$ separates the points of $\mathcal{HT}^{p'}_{q',\alpha}(\Bn)$. We remark that, being bounded functions, the reproducing kernels
$$B_z^{n+\alpha}(u)=\frac{c(n,n+\alpha)}{(1-\langle u,z\rangle)^{1+2n+\alpha}}$$ belong to $\mathcal{HT}^p_{q,\alpha}(\Bn)$. If the holomorphic function $h_\tau$ represents the zero functional, then it follows that
$$\langle B_z^{n+\alpha},h_\tau\rangle_{n+\alpha}=\overline{h_\tau(z)}=0$$ for every $z \in \Bn$. Consequently $h_\tau=0$ as a holomorphic function. The proof is now complete.

We now turn our attention to the case $q=1$. If $g \in \mathcal{BT}^{p'}(\Bn)$, then since $P_{n+\alpha}:T^{p'}_{\infty,\alpha}(\Bn)\to \mathcal{BT}^{p'}(\Bn)$ is surjective, we can use the open mapping theorem and find $G \in T^{p'}_{\infty,\alpha}(\Bn)$ with $P_{n+\alpha}G=g$ and $\|g\|_{\mathcal{BT}^{p'}(\Bn)}\gtrsim \|G\|_{T^{p'}_{\infty,\alpha}(\Bn)}$. Thus, if $f$ is a polynomial, we can use Fubini's theorem to obtain
$$|\langle f,g \rangle_{n+\alpha}|=|\langle f,P_{n+\alpha}G\rangle_{n+\alpha}|=|\langle f,G\rangle_{n+\alpha}| \lesssim \|f\|_{\mathcal{HT}^p_{1,\alpha}(\Bn)}\|g\|_{\mathcal{BT}^{p'}(\Bn)}.$$
Since polynomials are dense in $\mathcal{HT}^p_{1,\alpha}(\Bn)$, we see that $g$ induces an element in its dual.

Next, we take arbitrary $\tau \in (\mathcal{HT}^p_{1,\alpha}(\Bn))^*$. As in the previous part, the Hahn-Banach theorem and the description of the dual of $T^p_{1,\alpha}(\Bn)$ together give $g_\tau \in T^{p'}_\infty(\Bn)$. By letting $f_r(z)=f(rz)$ ($r \in (0,1)$) denote the dilatation of $f$, by Lemma \ref{approx}, we have that if $f \in \mathcal{HT}^p_{1,\alpha}(\Bn)$, then $f_r\to f$ in $\mathcal{HT}^p_{1,\alpha}(\Bn)$ and $\|f_r\|_{\mathcal{HT}^p_{1,\alpha}(\Bn)}\lesssim \|f\|_{\mathcal{HT}^p_{1,\alpha}(\Bn)}$. Now, we have
$$\tau(f)=\langle f,g_\tau\rangle_{n+\alpha}=\lim_{r\to 1^-} \langle f_r,g_\tau\rangle_{n+\alpha}=\lim_{r\to 1^-}\langle f_r,P_{n+\alpha}g_\tau\rangle_{n+\alpha}=\lim_{r\to 1^-}\langle f_r,h_\tau \rangle_{n+\alpha}.$$
Since the integrand of the first limit is in $L^1_{n+\alpha}(\Bn)$, this limit, and therefore the other limits exist. This already shows that $\tau$ can be represented by $P_{n+\alpha} g_\tau = h_\tau \in \mathcal{BT}^{p'}(\Bn)$. Since $f_r$ is holomorphic, we can take $t>0$ and proceed with $f_r=P_{n+\alpha+t}f_r$, and use duality, to arrive at
$$|\tau(f)|=\lim_{r\to 1^-}|\langle f_r,(1-|\cdot|^2)^t R^{n+\alpha,t}h_\tau \rangle_{n+\alpha}|\lesssim \|f\|_{\mathcal{HT}^p_{1,\alpha}(\Bn)}\|h_\tau\|_{\mathcal{BT}^{p'}(\Bn)}.$$

To verify the claim about separation of points, one can proceed exactly as in the previous case. We obtain the desired duality result. Then pairing should generally be understood as
$$\lim_{r\to 1^-}\langle f_r,g\rangle_{n+\alpha}.$$
\end{proof}

\section{The space $\mathcal{BT}^{p',0}(\Bn)$: predual of $\mathcal{HT}^{p}_{1,\alpha}(\Bn)$}

\noindent In this section, we show that the spaces $\mathcal{HT}^{p}_{1,\alpha}(\Bn)$ themselves are dual spaces. Their preduals are the spaces $\mathcal{BT}^{p',0}(\Bn)$ that will be defined below. It should not come as a surprise that many of the results in this section resemble those already proven for $\mathcal{BT}^{p'}(\Bn)$, and therefore many details will be omitted. The duality result itself, however, uses a technique that has not been used in the paper so far (although the technique is not new, see \cite{ZhuBn}), so a complete proof is provided.

For every $r \in (0,1)$ and $f \in T^p_\infty(\Bn)$, we define
$$\rho_r(f)=\chi_{\Bn \setminus r\Bn} f,$$
and say that $f \in T^{p,0}_{\infty}(\Bn)$, if $\rho_r(f)\to 0$ in $T^p_\infty(\Bn)$ as $r\to 1^-$. It is clear from the definition that $ T^{p,0}_{\infty}(\Bn)$ is the closure of compactly supported functions  $f \in T^p_\infty(\Bn)$ under $\|\cdot\|_{T^p_\infty(\Bn)}$, and that it contains the space $L^\infty_0(\Bn)$ consisting of essentially bounded functions $f$ with
$$\lim_{r\to 1^-}\operatorname{ess} \sup_{|z|>r}|f(z)|=0.$$

\begin{definition}
Let, $1<p<\infty$, $t>0$, and assume that neither $n+s$ nor $n+s+t$ is a negative integer. A holomorphic $f$ belongs to the space $\mathcal{BT}^{p,0}(\Bn)$, if 
\begin{equation}\label{BT0}
(1-|\cdot|^2)^t R^{s,t}f \in T^{p,0}_\infty(\Bn).
\end{equation}
Moreover, the expression
$$\|(1-|\cdot|^2)^tR^{s,t}f\|_{T^p_\infty(\Bn)}$$ is a norm on $\mathcal{BT}^{p,0}(\Bn)$.
\end{definition}

\begin{lemma}\label{approx2}
Let $1<p<\infty$ and $f \in \mathcal{BT}^{p,0}(\Bn)$. Then, $f_r \to f$ in $\mathcal{BT}^{p,0}(\Bn)$ and $\|f_r\|_{\mathcal{BT}^{p}(\Bn)}\lesssim \|f\|_{\mathcal{BT}^{p}(\Bn)}$. Moreover, polynomials are dense in $\mathcal{BT}^{p,0}(\Bn)$.
\end{lemma}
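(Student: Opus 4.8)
The plan is to prove the three assertions of Lemma \ref{approx2} in the order they are stated, leaning heavily on the corresponding result for $\mathcal{HT}^p_{q,\alpha}(\Bn)$ (Lemma \ref{approx}) and on the boundedness facts already established for the operator $f\mapsto (1-|\cdot|^2)^tR^{s,t}f$. The key observation is that the dilation commutes with the fractional derivatives, exactly as used in the proof of Lemma \ref{approx}: since $f\mapsto f_r$ is a coefficient multiplier, one has $R^{s,t}(f_r)=(R^{s,t}f)_r$. Writing $G=(1-|\cdot|^2)^tR^{s,t}f$, the membership $f\in\mathcal{BT}^{p,0}(\Bn)$ means precisely $G\in T^{p,0}_\infty(\Bn)$, and the norm on $\mathcal{BT}^{p,0}(\Bn)$ is $\|G\|_{T^p_\infty(\Bn)}$. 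So everything reduces to analyzing how dilation acts on $T^p_\infty(\Bn)$.

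For the norm bound $\|f_r\|_{\mathcal{BT}^{p}(\Bn)}\lesssim\|f\|_{\mathcal{BT}^{p}(\Bn)}$, I would first relate $(1-|\cdot|^2)^tR^{s,t}(f_r)$ to a dilation of $G$. Because $R^{s,t}(f_r)=(R^{s,t}f)_r$, the weighted quantity is $(1-|z|^2)^t (R^{s,t}f)(rz)$, which differs from $G_r(z)=(1-r^2|z|^2)^t (R^{s,t}f)(rz)$ only by the bounded factor $((1-|z|^2)/(1-r^2|z|^2))^t\le 1$ (for $t>0$). Thus it suffices to bound $\|G_r\|_{T^p_\infty(\Bn)}\lesssim\|G\|_{T^p_\infty(\Bn)}$, i.e. to show dilation is uniformly bounded on $T^p_\infty(\Bn)$. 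This is handled exactly as the $\alpha=0$ reasoning in Lemma \ref{approx}: assuming aperture $\gamma\ge 2$, the containment $r\Gamma(\zeta)\subset\Gamma(\zeta)$ gives
$$\operatorname*{ess\,sup}_{z\in\Gamma(\zeta)}|G(rz)|\le \operatorname*{ess\,sup}_{w\in r\Gamma(\zeta)}|G(w)|\le \operatorname*{ess\,sup}_{w\in\Gamma(\zeta)}|G(w)|,$$
and integrating the $p$-th power over $\Sn$ yields $\|G_r\|_{T^p_\infty(\Bn)}\le\|G\|_{T^p_\infty(\Bn)}$.

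For the convergence $f_r\to f$ in $\mathcal{BT}^{p,0}(\Bn)$, the obstacle is that the "little-O" space $T^{p,0}_\infty(\Bn)$ is not defined by a pointwise area-integral like $\mathcal{HT}^p_{q,\alpha}(\Bn)$, so dominated convergence is not immediately available; here the hypothesis $f\in\mathcal{BT}^{p,0}(\Bn)$ is essential. The natural route is to split $G_r-G$ into a tail part and a main part using $\rho_\delta=\chi_{\Bn\setminus\delta\Bn}$. Given $\eps>0$, I would first choose $\delta$ close to $1$ so that $\|\rho_\delta G\|_{T^p_\infty(\Bn)}<\eps$, which is exactly the defining property of $T^{p,0}_\infty(\Bn)$; by the uniform boundedness of dilation just proved, the dilated tails $\|\rho_\delta(G_r)\|_{T^p_\infty(\Bn)}$ are also controlled uniformly in $r$. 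On the compact region $\delta\overline{\Bn}$ the function $G$ is uniformly continuous (indeed $R^{s,t}f$ is holomorphic, hence continuous on $\overline{\delta\Bn}$), so $G_r\to G$ uniformly there as $r\to1^-$, and the corresponding $T^p_\infty$ norm of the difference, being dominated by the sup over a compact set times $\sigma(\Sn)^{1/p}$, tends to $0$. Combining the tail estimate with the uniform convergence on compacta via the quasi-triangle inequality gives $\|G_r-G\|_{T^p_\infty(\Bn)}\to 0$, i.e. $f_r\to f$ in $\mathcal{BT}^{p,0}(\Bn)$. The density of polynomials then follows in the standard way: each $f_r$ has $R^{s,t}f_r$ extending holomorphically past $\overline{\Bn}$, so its Taylor polynomials converge to it uniformly on $\overline{\Bn}$, and since uniform convergence of $(1-|\cdot|^2)^tR^{s,t}(\cdot)$ on $\overline{\Bn}$ dominates the $T^p_\infty$-norm, the partial sums approximate $f_r$ in $\mathcal{BT}^{p,0}(\Bn)$; combined with $f_r\to f$ this yields density. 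The main obstacle, as indicated, is organizing the tail-plus-compact splitting in the $T^{p,0}_\infty$ norm, since one cannot simply invoke pointwise dominated convergence as in Lemma \ref{approx}.
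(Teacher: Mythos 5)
Your proposal is correct and follows essentially the same route as the paper's proof: the norm bound via $R^{s,t}(f_r)=(R^{s,t}f)_r$, $r\Gamma(\zeta)\subset\Gamma(\zeta)$ and $(1-|z|^2)^t\leq(1-r^2|z|^2)^t$; the convergence via an $\eps$-splitting into boundary tails (controlled by the defining property of $T^{p,0}_\infty(\Bn)$, uniformly in $r$) plus uniform continuity on a compact set; and density of polynomials from the fact that uniform convergence on $\overline{\Bn}$ dominates the $\mathcal{BT}^p(\Bn)$ norm. If anything, your write-up fills in slightly more detail than the paper (e.g.\ noting that the dilated tail is really bounded by $\|\rho_{r\delta}G\|_{T^p_\infty(\Bn)}$, so smallness requires $r\delta$ close to $1$), which is a point the paper leaves implicit.
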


\begin{proof}
Since the dilation commutes with the fractional derivatives, $r\Gamma(\zeta)\subset \Gamma(\zeta)$ and $(1-r^2|z|^2)^t \geq (1-|z|^2)^t$, the norm inequality is obvious, since
$$R^{s,t}f(rz)(1-|z|^2)^t\leq R^{s,t}f(rz)(1-r^2|z|^2)^t.$$

Next, let $\varepsilon>0$. For $f \in \mathcal{BT}^{p,0}(\Bn)$ there exists $r_0 \in (0,1)$ so that
$$\|\rho_s((1-|\cdot|^2)^t R^{s,t}f)\|_{T^p_\infty(\Bn)}<\varepsilon/3$$ and
$$\|\rho_s((1-|\cdot|^2)^t R^{s,t}f_r)\|_{T^p_\infty(\Bn)}<\varepsilon/3,$$
whenever $s,r>r_0$. Inside the compact set $\{|z|\leq r_0\}$, the functions involved are uniformly continuous, so the claim follows by letting $r\to 1^-$.

The density of polynomials follows in a standard way, since uniform convergence on $\Bn$ implies convergence in $\mathcal{BT}^{p}(\Bn)$.

\end{proof}

By a proof similar to that of Theorem \ref{BTchar}, one sees that every choice of $s$ and $t$ will provide an equivalent norm in an obvious way. Indeed, it only suffices to use the fact the polynomials are dense in $\mathcal{BT}^{p,0}(\Bn)$, along with the norm equivalence from Theorem \ref{BTchar}. It is also clear, after a similar chain of ideas, that the space $\mathcal{BT}^{p,0}(\Bn)$ is a closed subspace of $\mathcal{BT}^{p}(\Bn)$.

\begin{proposition}
Let $1<p<\infty$ and $\alpha>-n-1$. Then the Bergman projection $P_{n+\alpha}:T^{p,0}_\infty(\Bn)\to \mathcal{BT}^{p,0}(\Bn)$ is bounded and onto.
\end{proposition}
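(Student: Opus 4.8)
The plan is to leverage what was already established for the non-vanishing spaces and to isolate the only genuinely new point, namely that the projection respects the vanishing condition. Recall from Proposition \ref{projection} (and its proof) that, with the choice $s=n+\alpha$, the operator
$$\Phi f=(1-|\cdot|^2)^t R^{n+\alpha,t}P_{n+\alpha}f$$
is bounded from $T^p_\infty(\Bn)$ into itself, and that for $g\in\mathcal{BT}^p(\Bn)$ the function $G_t=(1-|\cdot|^2)^t R^{n+\alpha,t}g$ lies in $T^p_\infty(\Bn)$ with $P_{n+\alpha}G_t$ a constant multiple of $g$. Since $T^{p,0}_\infty(\Bn)$ is a closed subspace of $T^p_\infty(\Bn)$ and $\mathcal{BT}^{p,0}(\Bn)$ a closed subspace of $\mathcal{BT}^{p}(\Bn)$, and since (by a proof as in Theorem \ref{BTchar}) the defining norm is independent of $s,t$, the whole statement will follow once we check that $\Phi$ maps the little-o subspace into itself; indeed, by definition $\|P_{n+\alpha}f\|_{\mathcal{BT}^{p,0}(\Bn)}=\|\Phi f\|_{T^p_\infty(\Bn)}$, so the norm estimate is inherited from Proposition \ref{projection}.

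For boundedness it thus remains to show $\Phi(T^{p,0}_\infty(\Bn))\subset T^{p,0}_\infty(\Bn)$. Because $\Phi$ is continuous on $T^p_\infty(\Bn)$ and $T^{p,0}_\infty(\Bn)$ is the closure of the compactly supported functions, it is enough to verify the inclusion for bounded $f$ supported in some $r_0\Bn$, such functions being dense in $T^{p,0}_\infty(\Bn)$. For such $f$ the holomorphic factor
$$R^{n+\alpha,t}P_{n+\alpha}f(z)=c(n,n+\alpha)\int_{r_0\Bn}\frac{(1-|u|^2)^{n+\alpha}}{(1-\langle z,u\rangle)^{1+2n+\alpha+t}}f(u)\,dV_n(u)$$
has a kernel that stays bounded for $z\in\overline{\Bn}$, since $|1-\langle z,u\rangle|\geq 1-r_0>0$ there; hence this factor is bounded on $\overline{\Bn}$, say by $M_f$. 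Consequently $|\Phi f(z)|\leq M_f(1-|z|^2)^t$, so that $\|\rho_r(\Phi f)\|_{T^p_\infty(\Bn)}\leq M_f(1-r^2)^t\to 0$ as $r\to 1^-$, giving $\Phi f\in T^{p,0}_\infty(\Bn)$. By continuity of $\Phi$ and closedness of $T^{p,0}_\infty(\Bn)$, the inclusion holds on all of $T^{p,0}_\infty(\Bn)$, and the boundedness of $P_{n+\alpha}:T^{p,0}_\infty(\Bn)\to\mathcal{BT}^{p,0}(\Bn)$ follows.

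For surjectivity, take $g\in\mathcal{BT}^{p,0}(\Bn)$. By the very definition of this space, $G_t=(1-|\cdot|^2)^t R^{n+\alpha,t}g\in T^{p,0}_\infty(\Bn)$, and as recalled above the relation $R_{n+\alpha,t}R^{n+\alpha,t}g=g$ gives $P_{n+\alpha}\big(\tfrac{c(n,n+\alpha+t)}{c(n,n+\alpha)}G_t\big)=g$. Thus every $g$ already possesses a preimage lying in $T^{p,0}_\infty(\Bn)$, which proves that $P_{n+\alpha}$ is onto.

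The main obstacle is precisely the preservation of the vanishing condition, i.e. the step $\Phi(T^{p,0}_\infty(\Bn))\subset T^{p,0}_\infty(\Bn)$; everything else is inherited directly from Proposition \ref{projection}. The reduction to compactly supported data via density and the continuity of $\Phi$ is what makes this manageable, together with the elementary observation that a compactly supported datum produces a holomorphic factor that is bounded up to the boundary, so that the weight $(1-|z|^2)^t$ forces the tent-space tails to vanish uniformly in $\zeta$.
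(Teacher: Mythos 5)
Your proof is correct and takes essentially the same route as the paper's: map the compactly supported functions into $\mathcal{BT}^{p,0}(\Bn)$, extend to all of $T^{p,0}_\infty(\Bn)$ by density, continuity (from Proposition \ref{projection}) and closedness, and obtain surjectivity from the explicit preimage $\tfrac{c(n,n+\alpha+t)}{c(n,n+\alpha)}(1-|\cdot|^2)^t R^{n+\alpha,t}g$, which lies in $T^{p,0}_\infty(\Bn)$ by the very definition of $\mathcal{BT}^{p,0}(\Bn)$. The only difference is that you make explicit the step the paper dismisses as clear, namely that a compactly supported datum gives a kernel bounded up to the boundary, hence $|\Phi f(z)|\lesssim (1-|z|^2)^t$ and the required vanishing of the tent-space tails.
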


\begin{proof}
Clearly, the projection $P_{n+\alpha}$ maps compactly supported functions $f \in T^p_\infty(\Bn)$ inside $\mathcal{BT}^{p,0}(\Bn)$ with an appropriate norm bound coming from Proposition \ref{projection}. The claim now follows, since $\mathcal{BT}^{p,0}(\Bn)$ is closed in $\mathcal{BT}^{p}(\Bn)$.

One can extract a preimage of a function $f \in \mathcal{BT}^{p,0}(\Bn)$ in a manner identical to that in the proof of Proposition \ref{projection}; but now the preimage will be in $T^{p,0}_\infty(\Bn)$.
\end{proof}

We finally prove the duality $(\mathcal{BT}^{p,0}(\Bn))^*\sim \mathcal{HT}^{p'}_{1,\alpha}(\Bn)$. The argument of Theorem \ref{duality} does not seem to work, so we will have proceed in a different manner.

\begin{theorem}\label{duality2}
Let $1<p<\infty$. The dual of $\mathcal{BT}^{p,0}(\Bn)$ can be identified with $\mathcal{HT}^{p'}_{1,\alpha}(\Bn)$ under the pairing
$$\lim_{r\to 1^-} \langle f_r,g\rangle_{n+\alpha}.$$
\end{theorem}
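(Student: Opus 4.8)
The plan is to establish the two halves of the identification separately: first that each $g\in\mathcal{HT}^{p'}_{1,\alpha}(\Bn)$ induces a bounded functional on $\mathcal{BT}^{p,0}(\Bn)$ through $f\mapsto\lim_{r\to1^-}\langle f_r,g\rangle_{n+\alpha}$, and second that every element of $(\mathcal{BT}^{p,0}(\Bn))^*$ arises this way. The first half mirrors the $q=1$ computation in Theorem \ref{duality}: writing $g=P_{n+\alpha+t}g$ and transferring the fractional derivative onto the Bloch-type factor via the self-adjointness of $P_{n+\alpha+t}$ and \eqref{FracDer}, one obtains $\langle f_r,g\rangle_{n+\alpha}=c\,\langle(1-|\cdot|^2)^tR^{n+\alpha,t}f_r,g\rangle_{n+\alpha}$ with $c=c(n,\alpha,t)$. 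The right-hand side pairs the $T^p_\infty(\Bn)$-function $(1-|\cdot|^2)^tR^{n+\alpha,t}f_r$ against $g\in T^{p'}_{1,\alpha}(\Bn)$, a pairing controlled by \eqref{EqG} and H\"older; since $f\in\mathcal{BT}^{p,0}(\Bn)$, Lemma \ref{approx2} makes $(1-|\cdot|^2)^tR^{n+\alpha,t}f_r$ converge in $T^p_\infty(\Bn)$, so the limit exists and is dominated by $\|f\|_{\mathcal{BT}^p(\Bn)}\|g\|_{\mathcal{HT}^{p'}_{1,\alpha}(\Bn)}$. Injectivity follows by testing against the kernels $B_z^{n+\alpha}$, which lie in $\mathcal{BT}^{p,0}(\Bn)$ and reproduce $\overline{g(z)}$.

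For the converse I would avoid the Bergman projection $P_{n+\alpha}$ directly, since by Proposition \ref{notbdd} it is unbounded on $T^{p'}_{1,\alpha}(\Bn)$; this is the main obstacle. Instead, fix the parameter $s=n+\alpha$ in the definition of $\mathcal{BT}^{p,0}(\Bn)$, which is legitimate because the norm is independent of the admissible choice of $s,t$ (the little-o analogue of Theorem \ref{BTchar} noted after Lemma \ref{approx2}). The map $D\colon f\mapsto(1-|\cdot|^2)^tR^{n+\alpha,t}f$ is then an isometry of $\mathcal{BT}^{p,0}(\Bn)$ onto a closed subspace of $T^{p,0}_\infty(\Bn)$. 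Given $\Lambda\in(\mathcal{BT}^{p,0}(\Bn))^*$, the functional $\Lambda\circ D^{-1}$ on $D(\mathcal{BT}^{p,0}(\Bn))$ extends by Hahn--Banach, without increasing norm, to $\widetilde\Lambda\in(T^{p,0}_\infty(\Bn))^*$. Here I would invoke the measurable predual identity $(T^{p,0}_\infty(\Bn))^*\sim T^{p'}_{1,\alpha}(\Bn)$ under $\langle\cdot,\cdot\rangle_{n+\alpha}$ --- the ``$c_0$ versus $\ell^1$'' companion of Theorem \ref{dual}, established exactly as in the Coifman--Meyer--Stein theory --- to produce $G\in T^{p'}_{1,\alpha}(\Bn)$ with $\Lambda(f)=\langle Df,G\rangle_{n+\alpha}$ for all $f\in\mathcal{BT}^{p,0}(\Bn)$.

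The crux is then to recognize the adjoint of $D$. For a polynomial $f$, inserting \eqref{FracDer} into $\langle Df,G\rangle_{n+\alpha}$ and applying Fubini's theorem, the inner integral becomes a Bergman kernel integral whose exponent is $1+n+s+t$; the choice $s=n+\alpha$ makes this exactly the kernel of $P_{n+\alpha+t}$, so that
$$\langle Df,G\rangle_{n+\alpha}=c'\,\langle f,P_{n+\alpha+t}G\rangle_{n+\alpha},\qquad c'=c(n,n+\alpha)/c(n,n+\alpha+t).$$
Set $g:=c'\,P_{n+\alpha+t}G$. Although $P_{n+\alpha}$ fails to be bounded on $T^{p'}_{1,\alpha}(\Bn)$, the \emph{shifted} projection $P_{n+\alpha+t}\colon T^{p'}_{1,\alpha}(\Bn)\to T^{p'}_{1,\alpha}(\Bn)$ \emph{is} bounded --- this is precisely the estimate proved in the second half of Proposition \ref{projection} (valid since $p'>1$). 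Hence $g$ is holomorphic and lies in $T^{p'}_{1,\alpha}(\Bn)$, i.e. $g\in\mathcal{HT}^{p'}_{1,\alpha}(\Bn)$, with $\|g\|_{\mathcal{HT}^{p'}_{1,\alpha}(\Bn)}\lesssim\|G\|_{T^{p'}_{1,\alpha}(\Bn)}\lesssim\|\Lambda\|$. This is the step where the obstacle is resolved: replacing the unbounded $P_{n+\alpha}$ by the bounded $P_{n+\alpha+t}$, at the cost of carrying the weight $(1-|\cdot|^2)^t$ built into $\mathcal{BT}^{p,0}(\Bn)$.

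It remains to reconcile $\Lambda(f)=\langle f,g\rangle_{n+\alpha}$ (valid for polynomials $f$) with the asserted pairing. By Lemma \ref{approx2} polynomials are dense in $\mathcal{BT}^{p,0}(\Bn)$, and on polynomials the functional $f\mapsto\lim_{r\to1^-}\langle f_r,g\rangle_{n+\alpha}$ from the first half agrees with $\langle f,g\rangle_{n+\alpha}$ (because $f_r\to f$ uniformly on $\overline{\Bn}$ while $g\in L^1_{n+\alpha}(\Bn)$ by \eqref{EqG}). Thus $\Lambda$ and the functional attached to $g$ coincide on a dense subspace and are both continuous, so they are equal, giving $\Lambda(f)=\lim_{r\to1^-}\langle f_r,g\rangle_{n+\alpha}$ for every $f\in\mathcal{BT}^{p,0}(\Bn)$. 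The whole argument hinges on the two facts recorded above --- the measurable predual $(T^{p,0}_\infty(\Bn))^*\sim T^{p'}_{1,\alpha}(\Bn)$ and the boundedness of the shifted projection $P_{n+\alpha+t}$ --- the latter being the device that sidesteps the failure of $P_{n+\alpha}$ on $T^{p'}_{1,\alpha}(\Bn)$.
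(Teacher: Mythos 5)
Your first half is sound: writing $g=P_{n+\alpha+t}g$, moving the kernel onto $f_r$ via Fubini and \eqref{FracDer} to get $\langle f_r,g\rangle_{n+\alpha}=c\,\langle(1-|\cdot|^2)^tR^{n+\alpha,t}f_r,g\rangle_{n+\alpha}$, controlling the pairing by \eqref{EqG} and H\"older, and using Lemma \ref{approx2} for existence of the limit is exactly the computation the paper recycles from Theorem \ref{duality}. The converse, however, has a fatal gap: the ``measurable predual identity'' $(T^{p,0}_\infty(\Bn))^*\sim T^{p'}_{1,\alpha}(\Bn)$ on which your entire construction of $G$ (and hence of $g=c'P_{n+\alpha+t}G$) rests is \emph{false}, and it is not a routine companion of Theorem \ref{dual}. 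The space $T^{p,0}_\infty(\Bn)$ enforces decay only near $\Sn$; in the interior it retains genuine $L^\infty$ structure. Concretely, fix a Euclidean ball $E$ compactly contained in some region $\Gamma(\zeta_0)$. Since the condition defining $\Gamma(\zeta)$ is open jointly in $(z,\zeta)$, there is a set $A\subset\Sn$ with $\sigma(A)>0$ such that $E\subset\Gamma(\zeta)$ for all $\zeta\in A$; consequently every measurable $h$ supported in $E$ satisfies $\sigma(A)^{1/p}\|h\|_{L^\infty(\Bn)}\leq\|h\|_{T^p_\infty(\Bn)}\leq\|h\|_{L^\infty(\Bn)}$, lies in $T^{p,0}_\infty(\Bn)$, and the collection of such $h$ is a subspace isomorphic to $L^\infty(E)$. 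Because Lebesgue measure on $E$ is nonatomic, $(L^\infty(E))^*$ contains functionals not representable by any $L^1(E)$ density (for example, a Hahn--Banach extension of point evaluation at the center of $E$, initially defined on continuous functions). Extending such a functional by Hahn--Banach to all of $T^{p,0}_\infty(\Bn)$ produces an element of $(T^{p,0}_\infty(\Bn))^*$ that cannot be written as $\langle\cdot,G\rangle_{n+\alpha}$ for any locally integrable $G$, let alone $G\in T^{p'}_{1,\alpha}(\Bn)$. The ``$c_0$ versus $\ell^1$'' analogy fails precisely because the underlying measure is nonatomic: vanishing at the boundary does not cure the interior $(L^\infty)^*\neq L^1$ pathology. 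So the representing function $G$ you need may simply not exist.

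This failure is exactly why the paper states that the Hahn--Banach argument of Theorem \ref{duality} does not work here and proceeds differently. Given $\tau\in(\mathcal{BT}^{p,0}(\Bn))^*$, the paper defines the candidate $g$ coefficientwise, $b_m\propto\overline{\tau(z^m)}$ (the series converges since $\|z^m\|_{\mathcal{BT}^{p}(\Bn)}$ is uniformly bounded in $m$), verifies $\tau(f_r)=c\,\langle f,g_r\rangle_{n+\alpha}$ through the kernel expansion and density of polynomials, and only then shows $g\in\mathcal{HT}^{p'}_{1,\alpha}(\Bn)$: the already-established duality $(\mathcal{HT}^{p'}_{1,\alpha}(\Bn))^*\sim\mathcal{BT}^{p}(\Bn)$ of Theorem \ref{duality} gives
$$\|g_r\|_{\mathcal{HT}^{p'}_{1,\alpha}(\Bn)}\asymp\sup\bigl\lbrace|\langle f,g_r\rangle_{n+\alpha}|:\|f\|_{\mathcal{BT}^{p}(\Bn)}\leq1\bigr\rbrace=\sup\bigl\lbrace|\tau(f_r)|:\|f\|_{\mathcal{BT}^{p}(\Bn)}\leq1\bigr\rbrace\lesssim\|\tau\|,$$
uniformly in $r$ by Lemma \ref{approx2}, and Fatou's lemma then passes the bound to $g$. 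Your two correct technical observations --- that $P_{n+\alpha+t}$ is bounded on $T^{p'}_{1,\alpha}(\Bn)$ by Proposition \ref{projection}, and that it is the adjoint of $f\mapsto(1-|\cdot|^2)^tR^{n+\alpha,t}f$ --- are real ingredients of the surrounding theory (they drive the easy direction and Proposition \ref{notbdd} is indeed the obstacle they circumvent), but they cannot substitute for the missing, and in fact false, predual identity.
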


\begin{proof}
It is already clear that every function in $\mathcal{HT}^{p'}_{1,\alpha}(\Bn)$ induces an element of the dual of $\mathcal{BT}^{p,0}(\Bn)$.

Let us now assume that $\tau \in (\mathcal{BT}^{p,0}(\Bn))^*$. With $m=(m_1, m_2 \dots, m_n)$ being a multi-index, it is well-known that if $f,g \in A^2_{n+\alpha}(\Bn)$, with series representations
$$f(z)=\sum_m a_m z^m, \qquad g(z)=\sum_m b_mz^m$$ then
$$c(n,n+\alpha)\langle f,g\rangle_{n+\alpha}=\sum_m \frac{m!\Gamma(2n+1+\alpha)}{\Gamma(2n+1+\alpha+|m|)}a_m\overline{b_m}.$$
Suppose now that $g$ has a representation as above with
$$b_m=\frac{\Gamma(2n+1+\alpha+|m|)}{m!\Gamma(2n+1+\alpha)} \overline{\tau(z^m)}.$$
Since the $\mathcal{BT}^{p}(\Bn)$ norm of $z^m$ is uniformly bounded with respect to the multi-index $m$, we know that the corresponding series defines a holomorphic function $g$ (see page 176 of \cite{ZhuBn}).

Now, if $f \in A^2_{n+\alpha}(\Bn)$
then, by the reproducing formula
$$f(z)=\langle f,B_z^{n+\alpha}\rangle_{n+\alpha}$$ with
$$B_z^{n+\alpha}(u)=\frac{c(n,n+\alpha)}{(1-\langle u,z\rangle)^{1+2n+\alpha}}=\sum_{m} \frac{\Gamma(2n+1+\alpha+|m|)}{m!\Gamma(2n+1+\alpha)}\overline{z}^m u^m,$$

we see that

$$\tau(f_r)=\sum_m \frac{m!\Gamma(2n+1+\alpha)}{\Gamma(2n+1+\alpha+|m|)} a_m \overline{b_m}r^{|m|}=c(n,n+\alpha)\langle f,g_r\rangle_{n+\alpha},$$
by first applying this formula to polynomials, and then approximating $f_r$ uniformly by them.

If $g$ were in $\mathcal{HT}^{p'}_{1,\alpha}(\Bn)$, we could immediately obtain
$$\tau(f_r)=\langle f_r,g\rangle_{n+\alpha},$$
but at the moment, we are not sure that the integral in question converges.
To show that this is the case, we use the duality
$$\mathcal{HT}^{p'}_{1,\alpha}(\Bn)\sim \mathcal{BT}^p(\Bn).$$
Note that, since $|g_r(z)|\to |g(z)|$ as $r\to 1^-$, Fatou's lemma shows that
$$\int_{\Gamma(\zeta)}|g(z)|(1-|z|^2)^\alpha dV_n(z)\leq \liminf_{r\to 1^-} \int_{\Gamma(\zeta)}|g_r(z)|(1-|z|^2)^\alpha dV_n(z),$$
so another application gives
$$\|g\|_{\mathcal{HT}^{p'}_{1,\alpha}(\Bn)}\lesssim \liminf_{r\to 1^-} \|g_r\|_{\mathcal{HT}^{p'}_{1,\alpha}(\Bn)}.$$

Bearing this in mind, it really suffices to show that the $\mathcal{HT}^{p'}_{1,\alpha}(\Bn)$ norms of the dilations $g_r$ are uniformly bounded. Now, by duality
$$\|g_r\|_{\mathcal{HT}^{p'}_{1,\alpha}(\Bn)}\asymp \sup\left\lbrace |\langle f,g_r\rangle_{n+\alpha}| : \|f\|_{\mathcal{BT}^{p}(\Bn)}\leq 1\right\rbrace.$$
But, by assumption
$$|\langle f,g_r\rangle_{n+\alpha}|=|\tau(f_r)|\lesssim \|f_r\|_{\mathcal{BT}^{p}(\Bn)}\lesssim \|f\|_{\mathcal{BT}^{p}(\Bn)},$$
where the last inequality follows from Lemma \ref{approx2}.

The rest of the proof follows as before; one can show the separation of points by applying the functional to the Bergman kernels.
\end{proof}

When $p=1=q$, the predual of $\mathcal{HT}^p_{q,\alpha}(\Bn)=A^1_{n+\alpha}(\Bn)$ is known to coincide with the little Bloch space $\mathcal{B}_0(\Bn)$ consisting of $f \in \mathcal{B}(\Bn)$ with
$$\lim_{|z|\to 1^-} (1-|z|^2)|\nabla f(z)|=0.$$

We remark that it is not difficult to see that 
$$\mathcal{B}_0(\Bn)\cup H^p(\Bn) \subset \mathcal{BT}^{p,0}(\Bn).$$
Also, the predual of $\mathcal{HT}^1_{\infty}(\Bn)$ is known to be isomorphic to space of holomorphic functions with vanishing mean oscillation $VMOA(\Bn)$.

We finally note that for every $\beta>-1$, one has
$$H^p(\Bn)\subset \mathcal{BT}^{p,0}(\Bn)\subset \mathcal{BT}^{p}(\Bn)\subset A^p_\beta (\Bn).$$

\section{Duality: the case $1<p<\infty$, $0<q<1$}

\noindent We will now deal with the case of small exponents $q$. For the corresponding Bergman spaces, there exists a very simple argument, see \cite{PR} and \cite{ZhuSmall}. However, this method seems to rely heavily on the fact the Bloch space is defined in terms of an $L^\infty$ norm. We see no obvious way to carry out this argument for the spaces $\mathcal{BT}^p(\Bn)$. Sometimes it helps to use some discretization technique, and this is what we will do. In this section we also need some results regarding atomic decomposition. These results are arguably known, and can be deduced from existing literature. To remain self-contained, we provide the proofs in the extent that will be need in Lemmas \ref{lattice} and \ref{converse}.

A sequence of points $\{z_ j\}\subset \Bn$ is said to be separated if there exists $\delta>0$ such that $\beta(z_ i,z_ j)\ge \delta$ for all $i$ and $j$ with $i\neq j$, where $\beta(z,u)$ denotes the Bergman metric on $\Bn$. This implies that there is $r>0$ such that the Bergman metric balls $D_ j=\{z\in \Bn :\beta(z,z_ j)<r\}$ are pairwise disjoint. \\

We need a well-known
result on decomposition of the unit ball $\Bn$.
By Theorem 2.23 in \cite{ZhuBn},
there exists a positive integer $N$ such that for any $0<r<1$ we can
find a sequence $\{a_k\}$ in $\Bn$ with the following properties:
\begin{itemize}
\item[(i)] $\Bn=\cup_{k}D(a_k,r)$.
\item[(ii)] The sets $D(a_k,r/4)$ are mutually disjoint.
\item[(iii)] Each point $z\in\Bn$ belongs to at most $N$ of the sets $D(a_k,4r)$.
\end{itemize}

Any sequence $\{a_k\}$ satisfying the above conditions  is called
an $r$-\emph{lattice}
(in the Bergman metric). Obviously any $r$-lattice is a separated sequence.

Suppose now that an $r$-lattice $Z=\{a_k\}$ is fixed, and consider the complex-valued sequences enumerated by this lattice: $c_k=f(a_k)$. For $0<p<\infty$ and $0<q\leq \infty$, the tent space $T^p_q(Z)$ consists of those sequences $\{c_k\}$ that satisfy
$$\|\{c_k\}\|_{T^p_q(Z)}=\left( \int_{\Sn} \left(\sum_{a_k \in \Gamma(\zeta)}|c_k|^q\right)^{p/q}d\sigma(\zeta)\right)^{1/p}<\infty.$$
Analogously, the tent space $T_\infty^p(Z)$ consists of $\{c_k\}$ with
$$\|\{c_k\}\|_{T^p_\infty(Z)}=\left( \int_{\Sn} \sup_{a_k \in \Gamma(\zeta)}|c_k|^pd\sigma(\zeta)\right)^{1/p}<\infty.$$

The following it the key duality result for such tent spaces of sequences. It is Proposition 2 in \cite{Ars}. See also \cite{Jev, Lue1, Lue2}. We will only need it for the case $0<q<1$.

\begin{otherth}\label{lattice}
Let $Z=\{a_k\}$ be an $r$-lattice. Suppose that $1<p,q<\infty$. Then the dual of $T^p_q(Z)$ can be identified with $T^{p'}_{q'}(Z)$ under the pairing
$$\langle \{c_k\},\{d_k\}\rangle_Z=\sum_{k=1}^\infty c_k\overline{d_k}(1-|a_k|^2)^n.$$
Moreover, if $0<q<1<p<\infty$, then the dual of $T^p_q(Z)$ can be identified with $T^{p'}_\infty(Z)$ under the same pairing.
\end{otherth}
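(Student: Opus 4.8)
The plan is to prove the two inclusions separately, treating the self-dual range $1<p,q<\infty$ and the endpoint range $0<q<1<p<\infty$ in parallel, with the real content lying in the reverse inclusion for $q<1$. The basic tool is the discrete counterpart of \eqref{EqG}: since $\int_{\Sn}\chi_{\{a_k\in\Gamma(\zeta)\}}\,d\sigma(\zeta)=\sigma(I(a_k))\asymp(1-|a_k|^2)^n$, one has for any nonnegative $\{e_k\}$
$$\sum_k e_k(1-|a_k|^2)^n\asymp\int_{\Sn}\Big(\sum_{a_k\in\Gamma(\zeta)}e_k\Big)\,d\sigma(\zeta).$$
For the easy inclusion I would fix $\{d_k\}$ in the candidate dual and estimate $|\langle\{c_k\},\{d_k\}\rangle_Z|\lesssim\int_{\Sn}\sum_{a_k\in\Gamma(\zeta)}|c_k|\,|d_k|\,d\sigma(\zeta)$ using the display above, then apply a fiberwise H\"older inequality followed by H\"older in $\zeta$ with exponents $p,p'$. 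When $0<q<1$ the fiber estimate is $\sum_{a_k\in\Gamma(\zeta)}|c_k|\,|d_k|\le\big(\sup_{a_k\in\Gamma(\zeta)}|d_k|\big)\big(\sum_{a_k\in\Gamma(\zeta)}|c_k|^q\big)^{1/q}$, where I use $\ell^q\hookrightarrow\ell^1$; when $1<q<\infty$ it is the ordinary H\"older inequality with exponents $q,q'$. Either way this shows $T^{p'}_\infty(Z)$ (resp. $T^{p'}_{q'}(Z)$) embeds continuously into $(T^p_q(Z))^*$.

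For the reverse inclusion, let $\tau\in(T^p_q(Z))^*$. Finitely supported sequences are dense in $T^p_q(Z)$ (truncate $\{c_k\}$ to $|a_k|\le r$; the lattice is locally finite and the difference tends to $0$ by dominated convergence applied to the $L^{p/q}(\Sn)$ integrand), so $\tau$ is determined by its values on the unit sequences $\mathbf e_k$. Define $\{d_k\}$ through $\overline{d_k}(1-|a_k|^2)^n=\tau(\mathbf e_k)$, so that $\tau(\{c_k\})=\langle\{c_k\},\{d_k\}\rangle_Z$ for all finitely supported $\{c_k\}$. It then remains to prove $\|\{d_k\}\|_{T^{p'}_\infty(Z)}\lesssim\|\tau\|$, i.e. $\|M\|_{L^{p'}(\Sn)}\lesssim\|\tau\|$ for $M(\zeta)=\sup_{a_k\in\Gamma(\zeta)}|d_k|$. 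After a finite-support truncation of $\{d_k\}$ (restoring the general case by Fatou's lemma), I choose measurably an index $k(\zeta)$ attaining the maximum and set $E_k=\{\zeta:k(\zeta)=k\}$; these are disjoint, $E_k\subset I(a_k)$, and $M=|d_k|$ on $E_k$. Testing $\tau$ against the finitely supported sequence
$$c_k=\frac{\operatorname{sgn}(\overline{d_k})}{(1-|a_k|^2)^n}\int_{E_k}M^{p'-1}\,d\sigma$$
gives, by the computation $\langle\{c_k\},\{d_k\}\rangle_Z=\sum_k|d_k|\int_{E_k}M^{p'-1}\,d\sigma=\int_{\Sn}M^{p'}\,d\sigma=\|M\|_{L^{p'}(\Sn)}^{p'}$, the bound $\|M\|_{L^{p'}(\Sn)}^{p'}\le\|\tau\|\,\|\{c_k\}\|_{T^p_q(Z)}$.

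The whole argument therefore reduces to the single estimate
\begin{equation*}
\|\{c_k\}\|_{T^p_q(Z)}\lesssim\|M\|_{L^{p'}(\Sn)}^{p'-1}=\big\|M^{p'-1}\big\|_{L^p(\Sn)},\tag{$\star$}
\end{equation*}
and I expect $(\star)$ to be the main obstacle. Here $|c_k|\asymp|d_k|^{p'-1}\sigma(E_k)/\sigma(I(a_k))$, and the naive pointwise bound — pulling $\sup_{a_k\in\Gamma(\zeta)}|d_k|$ out of the fiber sum — fails, since the residual geometric tail $\sum_{a_k\in\Gamma(\zeta)}\big(\sigma(E_k)/\sigma(I(a_k))\big)^q$ diverges across scales. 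The resolution is a stopping-time (dyadic) decomposition of the Kor\'anyi cone according to the level sets $\{M>2^j\}$: one groups the points $a_k\in\Gamma(\zeta)$ by the dyadic size of $|d_k|$, uses $M=|d_k|$ on $E_k$ together with the disjointness $\sum_k\sigma(E_k)<\infty$ and the bounded-overlap property (iii) of the lattice to sum the resulting geometric series, and then invokes $\sigma(I(a_k))\asymp(1-|a_k|^2)^n$ and the Hardy--Littlewood maximal theorem on $\Sn$, which is available because $p>1$ and $p/q>1$. The hypothesis $0<q<1$ is used precisely through $\ell^q\hookrightarrow\ell^1$ and $p/q>1$. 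Finally, the range $1<q<\infty$ follows by the same scheme with $\ell^{q'}$ in place of $\ell^\infty$ (test sequence built from $|d_k|^{q'-1}$ and ordinary H\"older), or, alternatively, by comparing the discrete and continuous tent norms along the lattice and appealing to the classical Coifman--Meyer--Stein duality of Theorem \ref{dual}.
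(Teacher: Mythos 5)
First, a point of comparison: the paper offers no proof of this statement at all --- it is Theorem B, imported as Proposition 2 of \cite{Ars} (see also \cite{Jev, Lue1, Lue2}) --- so your argument has to stand entirely on its own. Much of it does. The discrete Fubini identity, the fiberwise $\ell^q\hookrightarrow\ell^1$ (resp.\ H\"older) estimate for the easy inclusion, the density of finitely supported sequences, the recovery of $\{d_k\}$ from the unit sequences, the measurable maximal selection $E_k$, and the chain $\|M\|_{L^{p'}(\Sn)}^{p'}=\langle\{c_k\},\{d_k\}\rangle_Z\le\|\tau\|\,\|\{c_k\}\|_{T^p_q(Z)}$ reducing the whole hard inclusion to the single estimate $(\star)$ are all correct, and this is indeed the standard skeleton.

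The gap is $(\star)$ itself, and the problem is not merely that your sketch of it is incomplete: as you have set things up, $(\star)$ is false. You allow an arbitrary measurable choice of maximizing index, and in the presence of ties the estimate depends on that choice. Concretely, let all nonzero $|d_k|$ equal $1$, supported on lattice points at $m$ well-separated scales together with one coarse point whose $I(a_k)$ contains all the others; every visible index is then a maximizer, and an adversarial tie-breaking produces disjoint sets $E_k\subset I(a_k)$ with density $u_k=\sigma(E_k)/\sigma(I(a_k))\asymp 1/m$ at each of the $m$ scales, in a fixed positive fraction of the intervals of each scale (a digit-block construction does this), the leftover being swept into the coarse point. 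Then $M=\chi_\Omega$ with $\sigma(\Omega)\asymp 1$, so the right-hand side of $(\star)$ is $\asymp 1$, while on a set of $\zeta$ of measure bounded below the fiber sum is $\sum_{a_k\in\Gamma(\zeta)}|c_k|^q\asymp m\cdot m^{-q}$, whence $\|\{c_k\}\|_{T^p_q(Z)}\gtrsim m^{(1-q)/q}\to\infty$. (The theorem survives --- here $\|\tau\|\asymp 1$ --- it is your test-sequence bound that degenerates.) This shows two things. First, the selection must be canonical (e.g.\ among maximizers take the one with largest $1-|a_k|$), an issue invisible in your write-up. Second, no assembly of the ingredients you list --- disjointness of the $E_k$, $E_k\subset I(a_k)$, $M=|d_k|$ on $E_k$, bounded overlap, the Hardy--Littlewood maximal theorem --- can prove $(\star)$, because the counterexample satisfies every one of them; relatedly, the vector-valued (Fefferman--Stein) inequality that the ``invoke the maximal theorem'' step needs is false for inner exponent $q<1$, and $\mathcal{M}\bigl(M^{q(p'-1)}\bigr)$ dominates fiber sums carrying weights $u_k$, never $u_k^q$. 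What a correct proof must exploit is precisely what you never use: the $E_k$ exhaust $\{M>0\}$ and $M\ge|d_k|$ on all of $I(a_k)$, so a selected point of small density forces selected sets of comparable or larger value to fill the rest of $I(a_k)$; turning this competition across scales into an estimate (stopping times, or Luecking's Khinchine-inequality route in \cite{Lue2}) is the actual content of \cite{Ars}. Finally, your one-sentence alternative for $1<q<\infty$ via Theorem A would require exhibiting $T^p_q(Z)$ as a complemented subspace of $T^p_{q,\alpha}(\Bn)$, which you do not do.
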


\begin{lemma}\label{seq}
Let $0<p,q<\infty$, $Z=\{a_k\}$ be an $r$-lattice and $\theta>n\max(1,q/p,1/p,1/q)$. The mapping
$$S^\theta_Z \{c_k\}(z)=\sum_{k=1}^\infty c_k \frac{(1-|a_k|^2)^\theta}{(1-\langle z,a_k\rangle)^{\theta+(n+1+\alpha)/q}}$$
is bounded $T^p_q(Z)\to \mathcal{HT}^p_{q,\alpha}(\Bn)$.
\end{lemma}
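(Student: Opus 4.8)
The plan is to estimate the $\mathcal{HT}^p_{q,\alpha}(\Bn)$ quasinorm of $S^\theta_Z\{c_k\}$ directly, reducing everything to the standard tools already assembled, namely Lemmas \ref{Gamma}, \ref{IctBn} and \ref{FRgeneral}. First I would fix $\zeta \in \Sn$ and observe that the target quantity is
$$\int_{\Sn}\left(\int_{\Gamma(\zeta)}\Big|\sum_k c_k \frac{(1-|a_k|^2)^\theta}{(1-\langle z,a_k\rangle)^{\theta+(n+1+\alpha)/q}}\Big|^q (1-|z|^2)^\alpha dV_n(z)\right)^{p/q}d\sigma(\zeta).$$
The crucial first reduction is to pass from the modulus of the sum to a sum of the moduli at the level of the inner $L^q$ integral. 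Here the two regimes $q \ge 1$ and $q<1$ must be handled separately. When $q \ge 1$ I would like to bound $|\sum_k c_k K_k(z)|^q$ by a suitable weighted sum $\sum_k |c_k|^q |K_k(z)|$ times a harmless factor, which is achieved by a H\"older splitting of the kernel exactly as in the proof of Theorem \ref{HTchar}: write the exponent of $|1-\langle z,a_k\rangle|$ as a sum and apply H\"older to control the cross terms, using Lemma \ref{IctBn} to bound the "spare" kernel sum pointwise. When $0<q<1$ the inequality $(\sum|x_k|)^q \le \sum |x_k|^q$ is available for free and makes this step immediate.

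Once the inner integral is dominated by an expression of the form $\sum_k |c_k|^q (1-|a_k|^2)^{\theta q}\int_{\Gamma(\zeta)}|1-\langle z,a_k\rangle|^{-\lambda_0}(1-|z|^2)^{\gamma}dV_n(z)$, I would enlarge $\Gamma(\zeta)$ to all of $\Bn$ after inserting the standard factor $((1-|z|^2)/|1-\langle z,\zeta\rangle|)^\lambda$ for large $\lambda$ (legitimate since $1-|z|^2 \asymp |1-\langle z,\zeta\rangle|$ on $\Gamma(\zeta)$), and then evaluate the resulting $z$-integral by Lemma \ref{FRgeneral}. The point of choosing $\theta$ large is precisely to make the parameters admissible in \eqref{FR1}; the outcome should be a clean bound of the inner integral by
$$\sum_k |c_k|^q \left(\frac{1-|a_k|^2}{|1-\langle a_k,\zeta\rangle|}\right)^{\mu}(1-|a_k|^2)^{n}$$
for some large $\mu$, where the $(1-|a_k|^2)^n$ records the Bergman-metric volume of the ball $D(a_k,r)$.

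The next step converts this discrete sum over the lattice into the lattice tent (quasi)norm. I would use the finite-overlap property (iii) of the $r$-lattice together with the fact that $\sigma(I(a_k)) \asymp (1-|a_k|^2)^n$ to recognize $\sum_k |c_k|^q (1-|a_k|^2)^n \,\delta_{a_k}$ as comparable to $\mu(\Gamma(\zeta))$ with $\mu$ the discrete measure $\sum_k |c_k|^q (1-|a_k|^2)^n \delta_{a_k}$. The factor $((1-|a_k|^2)/|1-\langle a_k,\zeta\rangle|)^\mu$ then plays exactly the role of the integrand in Lemma \ref{Gamma}, so raising to the power $p/q$, integrating over $\Sn$, and invoking Lemma \ref{Gamma} (with $s=p/q$, which forces the hypothesis $\mu> n\max(1,q/p)$, satisfied since $\theta$ and hence $\mu$ can be taken large) collapses everything to $\int_{\Sn}\mu(\Gamma(\zeta))^{p/q}d\sigma(\zeta) = \|\{c_k\}\|_{T^p_q(Z)}^p$. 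I expect the main obstacle to be the bookkeeping of exponents: one must verify that a single choice of large $\theta$ simultaneously satisfies the admissibility conditions of Lemma \ref{FRgeneral} in both the $q\ge 1$ and $q<1$ computations and the largeness condition of Lemma \ref{Gamma}, and that the threshold $\theta>n\max(1,q/p,1/p,1/q)$ in the statement is what these combined constraints produce; the analytic content is otherwise a routine reprise of the estimates in Theorem \ref{HTchar}.
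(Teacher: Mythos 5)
Your overall architecture --- $q$-subadditivity for $q\le 1$, termwise enlargement of $\Gamma(\zeta)$ to $\Bn$ via the factor $((1-|z|^2)/|1-\langle z,\zeta\rangle|)^\lambda$, Lemma \ref{FRgeneral} applied term by term, and Lemma \ref{Gamma} applied to a discrete measure --- is exactly the paper's proof of the case $0<q\le 1$ (which is the only case the paper proves; for general $q$ it cites Luecking and Pau). But your middle step contains a genuine error. Applying \eqref{FR1} to the $k$-th term gives
$$\int_{\Bn}\frac{(1-|z|^2)^{\lambda+\alpha}}{|1-\langle z,\zeta\rangle|^{\lambda}}\cdot\frac{(1-|a_k|^2)^{q\theta}}{|1-\langle z,a_k\rangle|^{q\theta+n+1+\alpha}}\,dV_n(z)\lesssim\frac{(1-|a_k|^2)^{q\theta}}{|1-\langle a_k,\zeta\rangle|^{q\theta}},$$
since the exponent produced by \eqref{FR1} is $\lambda+(q\theta+n+1+\alpha)-(\lambda+\alpha)-n-1=q\theta$; no factor $(1-|a_k|^2)^n$ appears. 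Your claimed intermediate bound carrying the extra $(1-|a_k|^2)^n$ is in fact false, not merely unjustified: test it on a sequence with a single nonzero entry $c_k=1$ and let $a_k\to\zeta$ radially. The inner integral stays bounded below by a constant (the integrand is $\asymp(1-|a_k|^2)^{-n-1}$ on $D(a_k,r)\cap\Gamma(\zeta)$, a set of volume $\asymp(1-|a_k|^2)^{n+1}$), while your bound decays like $(1-|a_k|^2)^n$.

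The second half of the slip is that, under this paper's conventions, $\|\{c_k\}\|_{T^p_q(Z)}^p$ carries no weights: it equals $\int_{\Sn}\bigl(\sum_{a_k\in\Gamma(\zeta)}|c_k|^q\bigr)^{p/q}d\sigma(\zeta)$, the factor $(1-|a_k|^2)^n$ appearing only in the dual pairing of Theorem \ref{lattice}, never in the norm. So even if your weighted bound held, your final identification of $\int_{\Sn}\mu(\Gamma(\zeta))^{p/q}d\sigma(\zeta)$ with $\|\{c_k\}\|_{T^p_q(Z)}^p$, where $\mu=\sum_k|c_k|^q(1-|a_k|^2)^n\delta_{a_k}$, would be wrong. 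The two slips cancel: delete the spurious $(1-|a_k|^2)^n$ throughout, apply Lemma \ref{Gamma} to the unit point-mass measure $\sum_k|c_k|^q\delta_{a_k}$ with exponent $q\theta$ (admissible, since $\theta>n\max(1/p,1/q)$ gives $q\theta>n\max(1,q/p)$), and your argument becomes precisely the paper's. One further caution on your $q\ge1$ sketch: Lemma \ref{IctBn} bounds integrals, not sums over the lattice, so controlling the ``spare'' factor in your H\"older splitting requires an additional sum-to-integral comparison for separated sequences; the paper sidesteps this entirely by proving only $q\le 1$, which is all that is needed later (in Theorem \ref{duality3}).
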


\begin{proof}
This result is proven in Proposition 1 on page 350 of \cite{Lue2} in the setting a the upper half-space, and the case $q=2$ for the ball can be deduced from the corresponding result in \cite{P1}. There should be no doubt about the validity of the result. Since especially the case $0<q\leq 1$ is very easy, and we are only going to use it, we will prove this particular case.

Note that if $0<q\leq 1$, then
$$\left|\sum_{k=1}^\infty c_k \frac{(1-|a_k|^2)^\theta}{(1-\langle z,a_k\rangle)^{\theta+(n+1+\alpha)/q}}\right|\leq \sum_{k=1}^\infty |c_k|^q \frac{(1-|a_k|^2)^{q\theta}}{|1-\langle z,a_k\rangle|^{q\theta+(n+1+\alpha)}}.$$
If $\zeta \in \Sn$ and $\lambda>0$ large enough, we may now estimate by using Lemma \ref{FRgeneral}
\begin{align*}
&\int_{\Gamma(\zeta)} S^\theta_Z\{c_k\}(z)(1-|z|^2)^\alpha dV_n(z)\\\leq &\sum_{k=1}^\infty |c_k|^q \int_{\Gamma(\zeta)}\frac{(1-|a_k|^2)^{q\theta}}{|1-\langle z,a_k\rangle|^{q\theta+(n+1+\alpha)}}dV_n(z)\\
\lesssim &\sum_{k=1}^\infty |c_k|^q \int_{\Bn} \frac{(1-|z|^2)^{\lambda+\alpha}}{|1-\langle z,\zeta\rangle|^\lambda}\frac{(1-|a_k|^2)^{q\theta}}{|1-\langle z,a_k\rangle|^{q\theta+(n+1+\alpha)}}dV_n(z)\\
\lesssim& \sum_{k=1}^\infty |c_k|^q \frac{(1-|a_k|^2)^{q\theta}}{|1-\langle a_k,\zeta\rangle|^{q\theta}}.
\end{align*}
Therefore, by using Lemma \ref{Gamma} with $d\mu=\sum |c_k|^q d\delta_{a_k}$, where $\delta_{a_k}$ is the Dirac delta point mass, we obtain
\begin{align*}
&\int_{\Sn} \left(\int_{\Gamma(\zeta)} S^\theta_Z\{c_k\}(z)(1-|z|^2)^\alpha dV_n(z)\right)^{p/q}d\sigma(\zeta)\\
\lesssim &\int_{\Sn} \left(\sum_{k=1}^\infty |c_k|^q \frac{(1-|a_k|^2)^{q\theta}}{|1-\langle a_k,\zeta\rangle|^{q\theta}}\right)^{p/q}d\sigma(\zeta)\\
\lesssim &\int_{\Sn} \left(\sum_{a_k \in \Gamma(\zeta)} |c_k|^q \right)^{p/q}d\sigma(\zeta).
\end{align*}
The claim now follows.

\end{proof}

The following result is the tent space analogue of Lemma 3 of \cite{ZhuSmall}.

\begin{lemma}\label{embed}
Let $0<p<\infty$, $0<q\leq 1$, $\alpha>-n-1$ and $\alpha'=\alpha'(q,n,\alpha)=\alpha+(1/q-1)(n+1+\alpha)$. Then 
$$\mathcal{HT}^p_{q,\alpha}(\Bn)\subset \mathcal{HT}^p_{1,\alpha'}(\Bn)$$ with bounded inclusion.
\end{lemma}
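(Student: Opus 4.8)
The plan is to establish, for each fixed $\zeta\in\Sn$, an inner comparison of the form
\[
\int_{\Gamma(\zeta)}|f(z)|(1-|z|^2)^{\alpha'}\,dV_n(z)\lesssim\left(\int_{\Gamma_{\gamma'}(\zeta)}|f(z)|^q(1-|z|^2)^\alpha\,dV_n(z)\right)^{1/q},
\]
with a possibly larger aperture $\gamma'>\gamma$ on the right-hand side. Once this is in hand, raising both sides to the power $p$, integrating in $\zeta$, and invoking the aperture independence of the tent quasinorm (a consequence of Lemma \ref{Gamma}) immediately gives $\|f\|_{\mathcal{HT}^p_{1,\alpha'}(\Bn)}\lesssim\|f\|_{\mathcal{HT}^p_{q,\alpha}(\Bn)}$, which is the asserted inclusion. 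Note that the global estimate quoted from \cite{ZhuSmall} cannot be applied directly, because $f$ restricted to $\Gamma(\zeta)$ is holomorphic but the domain $\Gamma(\zeta)$ is not the whole ball; a discretization will circumvent this.

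To prove the inner estimate I would fix an $r$-lattice $\{a_k\}$ as above, so that $\Bn=\bigcup_k D(a_k,r)$ with bounded overlap. Writing $K(\zeta)=\{k:D(a_k,r)\cap\Gamma(\zeta)\neq\emptyset\}$, one has $\Gamma(\zeta)\subset\bigcup_{k\in K(\zeta)}D(a_k,r)$, so the cone integral splits over these Bergman balls. On each $D(a_k,r)$ we have $(1-|z|^2)\asymp(1-|a_k|^2)$ and $V_n(D(a_k,r))\asymp(1-|a_k|^2)^{n+1}$, while subharmonicity of $|f|^q$ yields the subaveraging bound
\[
\sup_{z\in D(a_k,r)}|f(z)|^q\lesssim(1-|a_k|^2)^{-(n+1)}\int_{D(a_k,2r)}|f(w)|^q\,dV_n(w)\asymp(1-|a_k|^2)^{-(n+1+\alpha)}\,b_k,
\]
where $b_k=\int_{D(a_k,2r)}|f(w)|^q(1-|w|^2)^\alpha\,dV_n(w)$. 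Combining these facts, a direct computation gives
\[
\int_{D(a_k,r)}|f(z)|(1-|z|^2)^{\alpha'}\,dV_n(z)\lesssim(1-|a_k|^2)^{\alpha'+(n+1)-(n+1+\alpha)/q}\,b_k^{1/q}.
\]
The definition of $\alpha'$ is tailored precisely so that this exponent of $(1-|a_k|^2)$ vanishes: indeed $\alpha'+(n+1)=(n+1+\alpha)/q$. Hence the local integral is dominated simply by $b_k^{1/q}$.

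It then remains to sum over $k\in K(\zeta)$. Since $0<q\le 1$ we have $1/q\ge 1$, so by subadditivity of $t\mapsto t^q$ (equivalently $\|\cdot\|_{\ell^{1/q}}\le\|\cdot\|_{\ell^1}$),
\[
\sum_{k\in K(\zeta)}b_k^{1/q}\le\left(\sum_{k\in K(\zeta)}b_k\right)^{1/q}.
\]
Finally, if $D(a_k,r)$ meets $\Gamma(\zeta)$, then every point of $D(a_k,2r)$ lies within bounded Bergman distance of a point of $\Gamma_\gamma(\zeta)$ and hence belongs to a fixed enlarged Kor\'anyi region $\Gamma_{\gamma'}(\zeta)$; combined with the bounded overlap property (iii) of the lattice, this yields $\sum_{k\in K(\zeta)}b_k\lesssim\int_{\Gamma_{\gamma'}(\zeta)}|f(w)|^q(1-|w|^2)^\alpha\,dV_n(w)$. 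This is exactly the inner estimate sought.

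I expect the main difficulty to be bookkeeping rather than conceptual. The delicate points are the exact cancellation of the $(1-|a_k|^2)$-exponent, which is where the precise value of $\alpha'$ enters, and the justification of the two geometric reductions: the inclusion of $D(a_k,2r)$ in an enlarged cone, and the bounded-overlap control of $\sum_{k\in K(\zeta)}b_k$. The harmless change of aperture at the end is standard via Lemma \ref{Gamma}.
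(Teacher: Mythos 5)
Your proof is correct, but it takes a genuinely different route from the paper's, which handles the same per-$\zeta$ inner estimate without any lattice. In the paper, for $z\in\Gamma(\zeta)$ subharmonicity over $D(z,r)$ gives the pointwise bound $|f(z)|\lesssim(1-|z|^2)^{-(n+1+\alpha)/q}\bigl(\int_{\Gamma'(\zeta)}|f(u)|^q(1-|u|^2)^\alpha\,dV_n(u)\bigr)^{1/q}$, where $\Gamma'(\zeta)=\bigcup_{z\in\Gamma(\zeta)}D(z,r)$; then the factorization $|f|=|f|^q\,|f|^{1-q}$, with the pointwise bound applied only to the factor $|f|^{1-q}$, converts the weight $(1-|z|^2)^{\alpha'}$ into $(1-|z|^2)^\alpha$ (exactly your exponent cancellation, in continuous form) and pulls out the constant $\bigl(\int_{\Gamma'(\zeta)}|f|^q(1-|u|^2)^\alpha\,dV_n\bigr)^{1/q-1}$; integrating $|f|^q(1-|z|^2)^\alpha$ over $\Gamma(\zeta)\subset\Gamma'(\zeta)$ then yields the estimate in one line, and the equivalence of tent norms over $\Gamma$ and $\Gamma'$ (cited from \cite{Pav1,PR2015}) concludes. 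Your discretization replaces this with the lattice covering, local sup bounds on Bergman balls, the subadditivity $\sum_k b_k^{1/q}\le\bigl(\sum_k b_k\bigr)^{1/q}$, and the bounded-overlap and enlarged-cone bookkeeping; all the geometric facts you flag are standard and true, and your derivation of aperture independence from Lemma \ref{Gamma} matches how the paper justifies it. What each approach buys: the paper's continuous trick is shorter and needs essentially no geometry beyond the $\Gamma'$-equivalence, while your discrete argument is more modular and is essentially the same machinery the paper itself deploys later in Lemmas \ref{seq} and \ref{converse}, so it transfers directly to atomic-decomposition and sequence-space settings.
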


\begin{proof}
For $\zeta \in \Sn$ and $r>0$, let us denote
$$\Gamma'(\zeta)=\bigcup_{z \in \Gamma(\zeta)}D(z,r).$$
It is known that the tent space $T^p_{q,\alpha}(\Bn)$ be defined also in terms of the approach regions $\Gamma'(\zeta)$. See, for instance \cite{Pav1, PR2015}.

Now, if $z \in \Gamma(\zeta)$, then for a holomorphic $f$, we have by subharmonicity
\begin{align*}
|f(z)|&\lesssim \frac{1}{(1-|z|^2)^{(n+1+\alpha)/q}}\left(\int_{D(z,r)}|f(u)|^q (1-|u|^2)^\alpha dV_n(u)\right)^{1/q}\\
&\lesssim \frac{1}{(1-|z|^2)^{(n+1+\alpha)/q}}\left(\int_{\Gamma'(\zeta)}|f(u)|^q (1-|u|^2)^\alpha dV_n(u)\right)^{1/q}.
\end{align*}
Writing $|f|=|f|^q |f|^{1-q}$ and applying this estimate to the second factor gives
\begin{align*}
&\int_{\Gamma(\zeta)}|f(z)|(1-|z|^2)^{\alpha'}dV_n(z)\\
\lesssim&\int_{\Gamma(\zeta)}|f(z)|^q (1-|z|^2)^\alpha \left(\int_{\Gamma'(\zeta)}|f(u)|^q (1-|u|^2)^\alpha dV_n(u)\right)^{1/q-1}dV_n(z)\\
\lesssim& \left(\int_{\Gamma'(\zeta)} |f(z)|^q (1-|z|^2)^\alpha dV_n(z)\right)^{1/q}.
\end{align*}
This obviously yields the result.
\end{proof}

We will prove a partial converse of Lemma \ref{seq}. Such result was already sketched for $q=2$ in \cite{Lue1} for the upper half-space, and is likely to be known, or at least expected to hold, by experts. We will only need the case $q=1$, and it is somewhat easier than the general case (especially when $q\leq 2n/(2n+1)$, the proof becomes more technical, see Section 2.5 of \cite{ZhuBn} and Theorem 4.33 of \cite{Zhu} for related discussion). Therefore, we provide a proof to remove any doubts the reader might have concerning it. Essentially the same proof works for $q>2n/(2n+1)$, for reasons that become obvious by reading the argument.

\begin{lemma}\label{converse}
Let $0<p<\infty$, $Z=\{a_k\}$ be an $r$-lattice and $\theta>n\max(1,q/p,1/p,1/q)$. If $r$ is chosen to be sufficiently small, and $f \in \mathcal{HT}^p_{1,\alpha}(\Bn)$, then there exists a sequence $\{c_k\}\in T^p_1(Z)$ so that
$$f(z)=S_Z^\theta\{c_k\}(z).$$
Moreover $\|\{c_k\}\|_{T^p_1(Z)}\lesssim \|f\|_{\mathcal{HT}^p_{1,\alpha}(\Bn)}$.

\end{lemma}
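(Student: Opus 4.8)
The plan is to establish the atomic decomposition $f = S_Z^\theta\{c_k\}$ for $f \in \mathcal{HT}^p_{1,\alpha}(\Bn)$ by discretizing the reproducing formula over the $r$-lattice. First I would fix a reproducing formula adapted to the parameter $\theta$: since $f$ lies in some $A^1_\beta(\Bn)$, I can write
$$f(z) = c \int_{\Bn} \frac{(1-|u|^2)^{\theta - (n+1+\alpha)/q + \gamma}}{(1-\langle z,u\rangle)^{\theta + (n+1+\alpha)/q}} f(u)\, dV_n(u)$$
for a suitable exponent matching the kernel of $S_Z^\theta$ (here one adjusts $\gamma$ so the weight exponent and kernel exponent are the canonical Bergman pair, using the fractional operators $R^{s,t}$ and formula \eqref{FracInt} to legitimize this when $f$ is not in the naive Bergman class). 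Then I would partition $\Bn = \bigcup_k D(a_k,r)$ using the lattice decomposition and, on each $D(a_k,r)$, replace the slowly-varying factors $(1-|u|^2)$, $(1-\langle z,u\rangle)$ and $f(u)$ by their values at $a_k$. This produces the candidate coefficients
$$c_k = f(a_k)(1-|a_k|^2)^{(n+1+\alpha)/q - n} V_n(D(a_k,r)) \cdot (\text{const}),$$
so that $S_Z^\theta\{c_k\}$ is the Riemann-sum approximation of the integral formula.

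Next I would control the approximation error $f - S_Z^\theta\{c_k\}$. On each Bergman ball $D(a_k,r)$ the quantities $1-|u|^2$ and $|1-\langle z,u\rangle|$ vary by a factor controlled by $r$ (a standard consequence of the Bergman-metric geometry in Section 2 of \cite{ZhuBn}), and $f$, being holomorphic, has oscillation on $D(a_k,r)$ bounded by $r$ times a subharmonic average of $|f|$ — precisely the kind of estimate used in Lemma \ref{embed}. Summing these per-ball error bounds and invoking the Forelli-Rudin estimate (Lemma \ref{FRgeneral}) together with Lemma \ref{Gamma} gives an operator $T_r$ with $\|f - S_Z^\theta\{c_k\}\|_{\mathcal{HT}^p_{1,\alpha}(\Bn)} \le C r \|f\|_{\mathcal{HT}^p_{1,\alpha}(\Bn)}$. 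Choosing $r$ small enough makes $Cr < 1$, so $I - T_r$ is invertible on $\mathcal{HT}^p_{1,\alpha}(\Bn)$; absorbing the correction via the Neumann series $(I-T_r)^{-1}$ yields an \emph{exact} representation with modified coefficients. This is the standard device (as in Section 2.5 of \cite{ZhuBn} and Theorem 4.33 of \cite{Zhu}) that the lemma statement alludes to, and it is why $r$ must be taken sufficiently small.

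Finally I would verify the coefficient norm bound $\|\{c_k\}\|_{T^p_1(Z)} \lesssim \|f\|_{\mathcal{HT}^p_{1,\alpha}(\Bn)}$. Using $|c_k| \asymp |f(a_k)|(1-|a_k|^2)^{(n+1+\alpha)}$ and the subharmonicity estimate $|f(a_k)| \lesssim (1-|a_k|^2)^{-(n+1+\alpha)}\int_{D(a_k,r)}|f|(1-|u|^2)^\alpha dV_n(u)$ (again from Lemma \ref{embed}, now with $q=1$), each $|c_k|$ is dominated by a local $L^1$-mass of $f$; then
$$\sum_{a_k \in \Gamma(\zeta)} |c_k| \lesssim \int_{\Gamma'(\zeta)} |f(u)|(1-|u|^2)^\alpha\, dV_n(u),$$
using the finite-overlap property (iii) of the lattice and enlarging the cone as in Lemma \ref{embed}. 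Raising to the power $p$, integrating over $\Sn$, and applying the cone-equivalence of the tent-space norm delivers the estimate. \textbf{The main obstacle} I anticipate is the error-term analysis for the Neumann-series step: making the per-ball oscillation estimates uniform and summable, and confirming the error operator maps $\mathcal{HT}^p_{1,\alpha}(\Bn)$ to itself with small norm, requires careful bookkeeping of the geometry of $D(a_k,r)$ against the kernel exponents — this is where the restriction on $r$ and the interplay with Lemma \ref{FRgeneral} become delicate, whereas the coefficient bound itself is comparatively routine given Lemma \ref{embed}.
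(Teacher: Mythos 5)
Your proposal follows essentially the same route as the paper's proof: discretize a reproducing formula whose kernel matches $S_Z^\theta$ over the lattice to get candidate coefficients $c_k \asymp f(a_k)(1-|a_k|^2)^{n+1+\alpha}$, bound $\sum_{a_k\in\Gamma(\zeta)}|c_k|$ via subharmonicity and the enlarged cones with finite overlap, show the error operator has norm $O(r)$ on $\mathcal{HT}^p_{1,\alpha}(\Bn)$ using Lemma \ref{FRgeneral} and Lemma \ref{Gamma}, and invert by a Neumann series for small $r$. The only flaw is a bookkeeping slip: your displayed coefficient $c_k=f(a_k)(1-|a_k|^2)^{(n+1+\alpha)/q-n}V_n(D(a_k,r))$ carries one extra power of $(1-|a_k|^2)$ relative to the normalization $|c_k|\asymp|f(a_k)|(1-|a_k|^2)^{n+1+\alpha}$ that you correctly use afterwards (the exponent should be $(n+1+\alpha)/q-(n+1)$), which does not affect the argument.
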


\begin{proof}
According to Lemma 2.28 in \cite{ZhuBn}, we may partition $\Bn$ into sets $D_k$ that are pairwise disjoint and
$$D(a_k,r/4)\subset D_k \subset D(a_k,r)$$ for every $k$.

Let us set
$$c_k=\int_{D_k}(1-|z|^2)^{\theta+\alpha}dV_n(z)f(a_k)(1-|a_k|^2)^{-\theta}.$$
Since $D_k\subset D(a_k,r)$, we use subharmonicity to see that
$$|c_k|\lesssim \int_{D(a_k,r)}|f(z)|(1-|z|^2)^{\alpha}dV_n(z).$$
So, summing over $a_k \in \Gamma(\zeta)$ yields, as in the proof of Lemma \ref{embed}
$$\sum_{a_k \in \Gamma(\zeta)}|c_k|\lesssim \int_{\Gamma'(\zeta)}|f(z)|(1-|z|^2)^{\alpha}dV_n(z).$$
It follows that $\|\{c_k\}\|_{T^p_1(Z)}\lesssim \|f\|_{\mathcal{HT}^p_{1,\alpha}(\Bn)}$.

On the other hand, it easy to see that (with our choice of $\{c_k\}$) Lemma 4.32 of \cite{Zhu} generalizes into $\Bn$ as
$$|f(z)-S_Z^\theta\{c_k\}(z)|\lesssim r^{2n+1}\sum_{k=1}^\infty \frac{(1-|a_k|^2)^\theta}{|1-\langle z,a_k\rangle|^{\theta+n+1+\alpha}}\int_{D(a_k,1)}|f(u)|(1-|u|^2)^\alpha dV_n(u).$$
The factor $r^{2n+1}$ comes from integrating $|z|$ over $D(0,r)$ -- compare with the proof of Lemma 4.31 in \cite{ZhuBn}.

This gives us by first using Lemma \ref{FRgeneral} and then Lemma \ref{Gamma} with the measure
$$d\mu=\sum_{k=1}^\infty \left(\int_{D(a_k,1)}|f(u)|(1-|u|^2)^\alpha dV_n(u)\right)\delta_{a_k},$$
\begin{align*}
&\int_{\Sn}\left(\int_{\Gamma(\zeta)}|f(z)-S_Z^\theta\{c_k\}(z)|(1-|z|^2)^\alpha dV_n(z)\right)^{p}d\sigma(\zeta)\\
\lesssim& r^{p(2n+1)}\int_{\Sn}\left(\sum_{k=1}^\infty\frac{(1-|a_k|^2)^\theta}{|1-\langle \zeta,a_k\rangle|^\theta}\int_{D(a_k,1)}|f(u)|(1-|u|^2)^\alpha dV_n(u)\right)^p d\sigma(\zeta)\\
\lesssim& r^{p(2n+1)}\int_{\Sn}\left(\sum_{a_k \in \Gamma(\zeta)}\int_{D(a_k,1)}|f(u)|(1-|u|^2)^\alpha dV_n(u)\right)^pd\sigma(\zeta).
\end{align*}
Now, we may assume that there exists $C>0$ so that every point in $\Bn$ belongs to at most $C/r^{2n}$ of the sets $D(a_k,1)$ (see Lemmas 4.6 and 4.7 of \cite{Zhu}; the proof is presented for $n=1$, but generalizes to $n>1$ in an obvious way by noting that if $\beta(u,z)<R$, then $V_n(D(z,r))\geq C_R r^{2n}V_n(D(u,1))$, where $C_R$ only depends on $R>0$), and therefore
\begin{align*}
&\int_{\Sn}\left(\int_{\Gamma(\zeta)}|f(z)-S_Z^\theta\{c_k\}(z)|(1-|z|^2)^\alpha dV_n(z)\right)^p d\sigma(\zeta) \\
\lesssim r^p &\int_{\Sn}\left(\int_{\Gamma'(\zeta)}|f(z)|(1-|z|^2)^\alpha dV_n(z)\right)^p d\sigma(\zeta),
\end{align*}
where the implicit constants do not depend on $r$. If follows that if $T_Z^\theta$ is the linear operator, which for our choice of $\{c_k\}$ satisfies
$T_Z^\theta f=\{c_k\}$, then if $r>0$ is small enough, $$I-S_Z^\theta T_Z^\theta:\mathcal{HT}^p_{1,\alpha}(\Bn)\to\mathcal{HT}^p_{1,\alpha}(\Bn)$$ has norm smaller than one. Hence $S_Z^\theta T_Z^\theta$ is invertible with inverse given as the Neumann series. 

We already saw that $\{c_k\} \in T^p_1(Z)$, so the proof is complete.
\end{proof}

We are now ready to prove our last main result.

\begin{theorem}\label{duality3}
Let $0<q<1<p<\infty$, $\alpha>-n-1$ and $\alpha'=\alpha+(1/q-1)(n+1+\alpha)$. Then the dual of $\mathcal{HT}^p_{q,\alpha}(\Bn)$ can be identified with $\mathcal{BT}^{p'}(\Bn)$ under the pairing
$$\lim_{r\to 1^-} \langle f_r,g\rangle_{n+\alpha'}.$$
\end{theorem}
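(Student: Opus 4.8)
The plan is to prove that $g\mapsto L_g$, where $L_g(f)=\lim_{r\to1^-}\langle f_r,g\rangle_{n+\alpha'}$, is an isomorphism from $\mathcal{BT}^{p'}(\Bn)$ onto $(\mathcal{HT}^p_{q,\alpha}(\Bn))^*$. The organizing observation is that $\alpha'$ is chosen precisely so that $n+1+\alpha'=(n+1+\alpha)/q$; consequently the synthesis operator $S^\theta_Z$ of Lemma \ref{seq} is literally the \emph{same} operator whether viewed as mapping into $\mathcal{HT}^p_{q,\alpha}(\Bn)$ or into $\mathcal{HT}^p_{1,\alpha'}(\Bn)$. Together with Lemma \ref{embed}, which embeds $\mathcal{HT}^p_{q,\alpha}(\Bn)$ continuously into $\mathcal{HT}^p_{1,\alpha'}(\Bn)$, this lets me transfer the duality $(\mathcal{HT}^p_{1,\alpha'}(\Bn))^*\sim\mathcal{BT}^{p'}(\Bn)$ of Theorem \ref{duality} (applied with $\alpha$ replaced by $\alpha'$) down to the smaller space. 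For the easy direction I would note that a given $g\in\mathcal{BT}^{p'}(\Bn)$ represents, by Theorem \ref{duality}, a bounded functional on $\mathcal{HT}^p_{1,\alpha'}(\Bn)$; restricting along Lemma \ref{embed} gives $|L_g(f)|\lesssim\|f\|_{\mathcal{HT}^p_{1,\alpha'}(\Bn)}\|g\|_{\mathcal{BT}^{p'}(\Bn)}\lesssim\|f\|_{\mathcal{HT}^p_{q,\alpha}(\Bn)}\|g\|_{\mathcal{BT}^{p'}(\Bn)}$, so $L_g\in(\mathcal{HT}^p_{q,\alpha}(\Bn))^*$, and injectivity is clear since $L_g$ determines $g$ on polynomials, which are dense.

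For the hard direction I would fix an $r$-lattice $Z=\{a_k\}$ with $r$ small and take $\tau\in(\mathcal{HT}^p_{q,\alpha}(\Bn))^*$. Composing with $S^\theta_Z\colon T^p_q(Z)\to\mathcal{HT}^p_{q,\alpha}(\Bn)$ and applying Theorem \ref{lattice} in the case $0<q<1<p$ produces $\{d_k\}\in T^{p'}_\infty(Z)$ with $\tau(S^\theta_Z\{c_k\})=\langle\{c_k\},\{d_k\}\rangle_Z$ for all $\{c_k\}\in T^p_q(Z)$ and $\|\{d_k\}\|_{T^{p'}_\infty(Z)}\lesssim\|\tau\|$. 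The decisive feature is that $T^{p'}_\infty(Z)$ is simultaneously the dual of $T^p_1(Z)$, so the same $\{d_k\}$ defines a bounded functional $\sigma$ on $T^p_1(Z)$ via the identical pairing.

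Next I would set up atomic decomposition on both scales at once. Writing $G=S^\theta_Z T^\theta_Z$ with the analysis operator $T^\theta_Z$ of Lemma \ref{converse} built from the weight $\alpha'$, Lemma \ref{converse} gives invertibility of $G$ on $\mathcal{HT}^p_{1,\alpha'}(\Bn)$. The pointwise estimate $|f-Gf|\lesssim r^{2n+1}\sum_k(\cdots)$ from that proof is purely pointwise, hence $q$-insensitive; raising it to the power $q$, applying Lemmas \ref{FRgeneral} and \ref{Gamma} exactly as in the proof of Lemma \ref{seq}, and using subharmonicity of $|f|^q$ (valid for every $q>0$) should give $\|I-G\|_{\mathcal{HT}^p_{q,\alpha}(\Bn)\to\mathcal{HT}^p_{q,\alpha}(\Bn)}\lesssim r^{2n+1}<1$ as well, so $G$ is invertible on $\mathcal{HT}^p_{q,\alpha}(\Bn)$ too. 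The exponent bookkeeping ensuring $T^\theta_Z\colon\mathcal{HT}^p_{q,\alpha}(\Bn)\to T^p_q(Z)$ is again the identity $n+1+\alpha'=(n+1+\alpha)/q$, which matches the $q$-discretization of the area integral. Since the larger inverse extends the smaller one, the right inverses $B_1=T^\theta_Z G^{-1}$ and $B_q=T^\theta_Z G^{-1}$ agree on $\mathcal{HT}^p_{q,\alpha}(\Bn)$. I would then define $\tilde\tau=\sigma\circ B_1\in(\mathcal{HT}^p_{1,\alpha'}(\Bn))^*$, represent it by some $g\in\mathcal{BT}^{p'}(\Bn)$ through Theorem \ref{duality} with $\|g\|_{\mathcal{BT}^{p'}(\Bn)}\lesssim\|\{d_k\}\|_{T^{p'}_\infty(Z)}\lesssim\|\tau\|$, and for $f\in\mathcal{HT}^p_{q,\alpha}(\Bn)$ use $B_1f=B_qf\in T^p_q(Z)$ and $S^\theta_Z B_qf=f$ to get
$$\tilde\tau(f)=\langle B_qf,\{d_k\}\rangle_Z=\tau(S^\theta_Z B_qf)=\tau(f).$$
Since $\tilde\tau(f)=\lim_{r\to1^-}\langle f_r,g\rangle_{n+\alpha'}=L_g(f)$, this is the required representation, and with the easy direction it yields $(\mathcal{HT}^p_{q,\alpha}(\Bn))^*\sim\mathcal{BT}^{p'}(\Bn)$ with equivalent norms.

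The hard part, I expect, is the atomic-decomposition step: although the error estimate from Lemma \ref{converse} is $q$-independent pointwise, converting it into the operator bound $\|I-G\|<1$ on the quasi-Banach space $\mathcal{HT}^p_{q,\alpha}(\Bn)$ uniformly for all $0<q<1$ requires the local subharmonic estimates flagged as technical for small $q$, and one must confirm that the boundary case $(T^p_1(Z))^*=T^{p'}_\infty(Z)$ of the sequence-space duality is indeed available. Everything else is bookkeeping organized around the single identity $n+1+\alpha'=(n+1+\alpha)/q$.
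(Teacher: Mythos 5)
Your overall architecture (synthesis operator plus sequence-space duality, transferred to the $q=1$, $\alpha'$ scale through the identity $n+1+\alpha'=(n+1+\alpha)/q$) is the right one, and your easy direction is exactly the paper's. But the hard direction of your proof hinges on a step the paper deliberately avoids, and that step fails as you state it: you need the operator $G=S^\theta_Z T^\theta_Z$ to satisfy $\|I-G\|_{\mathcal{HT}^p_{q,\alpha}(\Bn)\to\mathcal{HT}^p_{q,\alpha}(\Bn)}<1$ for \emph{all} $0<q<1$, and your claimed bound $\lesssim r^{2n+1}$ ignores the covering multiplicity of the lattice. Every point of $\Bn$ lies in roughly $r^{-2n}$ of the balls $D(a_k,1)$ (this factor appears explicitly in the proof of Lemma \ref{converse}). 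At $q=1$ the multiplicity enters linearly and $r^{2n+1}\cdot r^{-2n}=r$, which is why Lemma \ref{converse} works. For $q<1$, after you raise the pointwise error estimate to the power $q$, use subharmonicity to replace $\bigl(\int_{D(a_k,1)}|f|(1-|u|^2)^{\alpha'}dV_n\bigr)^q$ by $\int_{D(a_k,2)}|f|^q(1-|u|^2)^{\alpha}dV_n$, and sum over $a_k\in\Gamma(\zeta)$, the multiplicity $r^{-2n}$ sits inside the $q$-sum and therefore hits the norm at power $1/q$. The net bound is $\|I-G\|\lesssim r^{2n+1-2n/q}$, which tends to zero as $r\to 0$ only when $q>2n/(2n+1)$. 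For $0<q\le 2n/(2n+1)$ (a range that is nonempty for every $n$, since $2n/(2n+1)\ge 2/3$) the Neumann-series argument collapses; this is precisely the threshold the paper flags when it says the atomic decomposition for small $q$ ``becomes more technical'' and points to Section 2.5 of \cite{ZhuBn} and Theorem 4.33 of \cite{Zhu}. So as written, your proof covers only $q>2n/(2n+1)$, not the full range of the theorem.

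The paper's proof never inverts anything at exponent $q$, and this is the idea you are missing. It first builds the candidate representative $g$ from the Taylor coefficients of $\tau$ (as in Theorem \ref{duality2}), so that $\tau(f_r)=\langle f,g_r\rangle_{n+\alpha'}$ holds automatically and no two-sided atomic decomposition of $f\in\mathcal{HT}^p_{q,\alpha}(\Bn)$ is ever needed. The lattice enters only one-sidedly: the boundedness of the synthesis map $S^\theta_Z:T^p_q(Z)\to\mathcal{HT}^p_{q,\alpha}(\Bn)$ (Lemma \ref{seq}, which is elementary precisely because $q\le 1$) together with Theorem \ref{lattice} yields the discrete estimate \eqref{discrete}, i.e.\ that the lattice values $(1-|a_k|^2)^{\theta-n}|R^{n+\alpha',\theta-n}g_r(a_k)|$ form a $T^{p'}_\infty(Z)$ sequence uniformly in $r$. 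Then, for an arbitrary $f\in\mathcal{HT}^p_{1,\alpha'}(\Bn)$, the atomic decomposition is invoked only at the exponent $q=1$ (Lemma \ref{converse}), and pairing those atoms against the discrete data shows $g_r$ is uniformly bounded in $(\mathcal{HT}^p_{1,\alpha'}(\Bn))^*\sim\mathcal{BT}^{p'}(\Bn)$, whence $g\in\mathcal{BT}^{p'}(\Bn)$. If you reorganize your argument this way --- using $\{d_k\}$ only to estimate discrete pairings rather than to construct a right inverse $B_q$ at level $q$ --- the problematic inversion disappears and the proof goes through for all $0<q<1$.
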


\begin{proof}
Let us first take an arbitrary element $\tau \in (\mathcal{HT}^p_{q,\alpha}(\Bn))^*$. By Lemma \ref{embed}, if $f \in \mathcal{HT}^p_{q,\alpha}(\Bn)$, then
$$f(z)=c(n,n+\alpha')\int_{\Bn}\frac{(1-|u|^2)^{n+\alpha'}}{(1-\langle z,u\rangle)^{1+2n+\alpha'}}f(u)dV_n(u).$$
A reasoning similar to that in the proof of Theorem \ref{duality2} shows that there exists a holomorphic $g$ so that
$$\tau(f_r)=\langle f,g_r\rangle_{n+\alpha'}.$$
Now, suppose that $Z=\{a_k\}$ is an $r$-lattice satisfying the assumptions of Lemma \ref{converse}, $\{c_k\} \in T^p_q(Z)$ and $\theta>n\max(1,q/p,1/p,1/q)$. Then by Lemma \ref{seq}, we have
\begin{equation}\label{esti}
|\tau(S_Z^\theta\{c_k\})|\lesssim \|\tau\|_{(\mathcal{HT}^p_{q,\alpha}(\Bn))^*}\|\{c_k\}\|_{T^p_q(Z)}.
\end{equation}
But
\begin{align*}
\tau(S_Z^\theta\{c_k\})&=\sum_{k=1}^\infty c_k \int_{\Bn}\frac{(1-|a_k|^2)^\theta}{(1-\langle z,a_k\rangle)^{\theta+(n+1+\alpha)/q}}\overline{g_r(z)}(1-|z|^2)^{n+\alpha'}dV_n(z)\\
&=\sum_{k=1}^\infty c_k \overline{(1-|a_k|^2)^{\theta-n}R^{n+\alpha',\theta-n}g_r(a_k)}(1-|a_k|^2)^n.
\end{align*}
By Theorem \ref{lattice} and estimate \eqref{esti}, we obtain
\begin{equation}\label{discrete}
\int_{\Sn} \left(\sup_{a_k \in \Gamma(\zeta)}(1-|a_k|^2)^{\theta-n}|R^{n+\alpha',\theta-n}g_r(a_k)|\right)^{p'}d\sigma(\zeta)\lesssim \|\{c_k\}\|_{T^p_q(Z)}.
\end{equation}

Next, let $f$ be an arbitrary function in $\mathcal{HT}^p_{1,\alpha'}(\Bn)$. By Lemma \ref{converse}, we deduce that for every $\theta'>n$
$$f(z)=\sum_{k=1}^\infty d_k \frac{(1-|a_k|^2)^{\theta'}}{(1-\langle z,a_k\rangle)^{\theta'+(n+1+\alpha')}},$$
where $\{d_k\} \in T^p_1(Z)$ and $\|\{d_k\}\|_{T^p_1(Z)}\lesssim \|f\|_{\mathcal{HT}^p_{1,\alpha'}(\Bn)}$. Moreover, the series converges in $\mathcal{HT}^p_{1,\alpha'}(\Bn)$, so we may assume that $f$ is bounded. We apply $g$ to $f$ under the pairing $\langle \cdot,\cdot\rangle_{n+\alpha'}$ to obtain as before
$$
\langle f,g_r\rangle_{n+\alpha'}=\sum_{k=1}^\infty d_k \overline{(1-|a_k|^2)^{\theta'-n}R^{n+\alpha',\theta'-n}g_r(a_k)}(1-|a_k|^2)^n.
$$

Since we may choose $\theta'=\theta$, we let $M$ be the supremum in \eqref{discrete} to conclude that
$$|\langle f,g_r\rangle_{n+\alpha'}|\lesssim M\|\{d_k\}\|_{T^p_1(Z)}\lesssim M\|f\|_{\mathcal{HT}^p_{1,\alpha'}(\Bn)}.$$
Therefore, $g_r$ is a holomorphic function in the dual of $\mathcal{HT}^p_{1,\alpha'}(\Bn)$ under the pairing $\langle f,g_r\rangle_{n+\alpha'}$. By Theorem \ref{duality}, $g_r \in \mathcal{BT}^{p'}(\Bn)$ with a norm bound independent of $r$. It follows that $g \in \mathcal{BT}^{p'}(\Bn)$.

Conversely, let $g \in \mathcal{BT}^{p'}(\Bn)$. By Theorem \ref{duality} and Lemma \ref{embed}, if $f \in \mathcal{HT}^p_{q,\alpha}(\Bn)$, we have

$$|\langle f_r,g\rangle_{n+\alpha'}|\lesssim \|g\|_{\mathcal{BT}^{p'}(\Bn)}\|f\|_{\mathcal{HT}^p_{1,\alpha'}(\Bn)}\lesssim \|g\|_{\mathcal{BT}^{p'}(\Bn)}\|f\|_{\mathcal{HT}^p_{q,\alpha}(\Bn)}.$$
Therefore, $g$ represents an element in the dual.

Yet again, the separation of points follows easily by applying $g$ to the Bergman kernel functions.

\end{proof}

\section{Duality: the case $0<p\leq 1$, and the spaces $\mathcal{CT}_{q,\alpha}(\Bn)$}

We begin by defining the holomorphic function spaces $\mathcal{CT}_{q,\alpha}(\Bn)$.

\begin{definition}
Let $1<q<\infty$ and $\alpha>-n-1$. The space $\mathcal{CT}_{q,\alpha}(\Bn)$ consists of those holomorphic $f$ that belong to $T^\infty_{q,\alpha}(\Bn)$. We equip it with the norm
$$\|f\|_{\mathcal{CT}_{q,\alpha}(\Bn)}=\operatorname{ess}\sup_{\zeta \in \Sn} \left(\sup_{u \in \Gamma(\zeta)}\frac{1}{(1-|u|^2)^n}\int_{Q(u)}|f(z)|^q (1-|z|^2)^{n+\alpha}dV_n(z)\right)^{1/q}.$$
\end{definition}

It is clear that this norm is equivalent to the supremum from Lemma \ref{CM} for any $T>0$, when $d\mu(z)=|f(z)|^q (1-|z|^2)^{n+\alpha}$. Hence, the classical space $BMOA(\Bn)$ consists of holomorphic $f$, whose first order derivatives belong to $\mathcal{CT}_{2,1-n}(\Bn)$.

We will next characterize $\mathcal{CT}_{q,\alpha}(\Bn)$ in terms of all fractional derivatives $R^{s,t}$. The method will be similar to that of Theorem \ref{HTchar} and \ref{BTchar}, but we have to use formula \eqref{FR2} of Lemma \ref{FRgeneral}. We will show how the argument goes, but will henceforth avoid repeating it.

\begin{theorem}\label{CTchar}
Let $1<q<\infty$, $\alpha>-n-1$ and $s,t$ be real numbers. Assume further that neither $s+n$ nor $s+n+t$ is a negative integer and $qt+n+1+\alpha>0$. Then the following are equivalent.
\begin{itemize}
\item[(a)] $f \in \mathcal{CT}_{q,\alpha}(\Bn)$;
\item[(b)] $R^{s,t}f \in \mathcal{CT}_{q,\alpha+qt}(\Bn)$. 
\end{itemize}
Moreover, the corresponding norms are equivalent.
\end{theorem}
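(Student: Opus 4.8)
The plan is to reduce everything to the Carleson measure characterization of Lemma \ref{CM}. Writing $d\mu_f(z)=(1-|z|^2)^{n+\alpha}|f(z)|^q\,dV_n(z)$ and $d\mu_R(z)=(1-|z|^2)^{n+\alpha+qt}|R^{s,t}f(z)|^q\,dV_n(z)$, the claim becomes the statement that $\mu_f$ is a Carleson measure if and only if $\mu_R$ is, with comparable $CM_T$-norms; by Lemma \ref{CM} it suffices to verify each Carleson condition for a single conveniently chosen $T>0$. The overall scheme mirrors the proof of Theorem \ref{HTchar}, except that the cone integration followed by Lemma \ref{Gamma} is replaced by testing against the Carleson kernel $(1-|a|^2)^T|1-\langle z,a\rangle|^{-n-T}$, and this is precisely what forces the use of estimate \eqref{FR2} in place of \eqref{FR1}.

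For the implication (a) $\Rightarrow$ (b) I would first reproduce the pointwise bound from the proof of Theorem \ref{HTchar}: for a large integer $N$, H\"older's inequality together with Lemmas \ref{change} and \ref{IctBn} gives
\[
|R^{s,t}f(z)|^q\lesssim (1-|z|^2)^{-(qt+n+1+\alpha)/q'}\int_{\Bn}\frac{(1-|u|^2)^{s+N}}{|1-\langle z,u\rangle|^{1+n+s+N+t-(n+1+\alpha)/q'}}|f(u)|^q\,dV_n(u),
\]
where the gain $(1-|z|^2)^{-(qt+n+1+\alpha)/q'}$ on the $q'$-factor appears exactly because $qt+n+1+\alpha>0$ makes the auxiliary integral of Lemma \ref{IctBn} converge. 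Inserting this into $\int_{\Bn}(1-|a|^2)^T|1-\langle z,a\rangle|^{-n-T}\,d\mu_R(z)$ and applying Fubini reduces matters to the inner integral $\int_{\Bn}(1-|z|^2)^{\sigma}|1-\langle z,a\rangle|^{-n-T}|1-\langle z,u\rangle|^{-\rho}\,dV_n(z)$, where $\sigma=t+(n+1+\alpha)/q-1$ and $\rho=1+n+s+N+t-(n+1+\alpha)/q'$. Here $\sigma>-1$ is again equivalent to $qt+n+1+\alpha>0$, and since $N$ is large one has $\rho>\sigma+n+1$, so we sit in the regime of \eqref{FR2} rather than \eqref{FR1}. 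Applying \eqref{FR2} with the $(1-|\cdot|^2)$-factor landing on $u$ yields $|1-\langle a,u\rangle|^{-n-T}(1-|u|^2)^{-(s+N-\alpha-n)}$; multiplying back by the weight $(1-|u|^2)^{s+N}$ collapses the $u$-integral to exactly $\int_{\Bn}(1-|a|^2)^T|1-\langle a,u\rangle|^{-n-T}\,d\mu_f(u)$, which is $\lesssim\|\mu_f\|_{CM_T}$.

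The converse (b) $\Rightarrow$ (a) runs symmetrically, starting from the identity $f=R_{s,t}R^{s,t}f$ and Lemma \ref{change}. H\"older's inequality now produces, for a small $\varepsilon>0$ with $\alpha-\varepsilon q>-n-1$, the bound $|f(z)|^q\lesssim (1-|z|^2)^{-\varepsilon q}\int_{\Bn}(1-|u|^2)^{s+N+tq+\varepsilon q}|1-\langle z,u\rangle|^{-1-n-s-N}|R^{s,t}f(u)|^q\,dV_n(u)$. Testing $\mu_f$ against the Carleson kernel, applying Fubini, and invoking \eqref{FR2} once more (with the weight exponent now $n+\alpha-\varepsilon q>-1$ in the role of $s$) recovers $\int_{\Bn}(1-|a|^2)^T|1-\langle a,u\rangle|^{-n-T}\,d\mu_R(u)\lesssim\|\mu_R\|_{CM_T}$, giving the reverse norm bound.

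The main obstacle is purely the bookkeeping needed to legitimise \eqref{FR2}: one must simultaneously take $N$ large (ensuring $\rho>0$, $\rho>\sigma+n+1$, and the analogous inequalities in the converse) and $T>0$ small (so that the condition $s+n+1>r$ reads $\sigma+1>T$, respectively $n+1+\alpha-\varepsilon q>T$), while verifying that the weight exponent $\sigma$, respectively $n+\alpha-\varepsilon q$, stays above $-1$. The first of these is where the hypothesis $qt+n+1+\alpha>0$ enters decisively, and the freedom to fix a single small $T$ is exactly what Lemma \ref{CM} grants. Once the parameters are pinned down, each application of \eqref{FR2} is routine, and the two directions together yield the equivalence of norms.
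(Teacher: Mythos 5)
Your proposal is correct and follows essentially the same route as the paper: the pointwise estimate inherited from the proof of Theorem \ref{HTchar}, testing against the Carleson kernel $(1-|a|^2)^T|1-\langle z,a\rangle|^{-n-T}$, Fubini, and then estimate \eqref{FR2} of Lemma \ref{FRgeneral} with $T$ chosen small (the paper takes $0<T<t+(n+1+\alpha)/q$) and $N$ large, concluding via Lemma \ref{CM}. The only difference is that the paper writes out only the implication (a) $\Rightarrow$ (b) and leaves the converse as an exercise, whereas you carry out the symmetric argument for (b) $\Rightarrow$ (a) explicitly, with the correct parameter constraints ($\alpha-\varepsilon q>-n-1$ and $T<n+1+\alpha-\varepsilon q$).
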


\begin{proof}
Let us assume (a), and let $t+(n+1+\alpha)/q>T>0$. Then, by reasoning as in the proof of Theorem \ref{HTchar}, for a sufficiently large positive integer $N$ we will use part \eqref{FR2} of Lemma \ref{FRgeneral} to obtain
\begin{align*}
&\int_{\Bn}\frac{(1-|a|^2)^T}{|1-\langle z,a\rangle|^{n+T}}(1-|z|^2)^{\alpha+tq+n}|R^{s,t}f(z)|^q dV_n(z)\\
\lesssim &\int_{\Bn}\frac{(1-|a|^2)^T}{|1-\langle z,a\rangle|^{n+T}}(1-|z|^2)^{t+(n+1+\alpha)/q-1}\int_{\Bn}\frac{(1-|u|^2)^{s+N}|f(u)|^q}{|1-\langle z,u\rangle|^{1+n+s+N+t-(n+1+\alpha)/q'}}dV_n(u)\\
\lesssim &\int_{\Bn}\frac{(1-|a|^2)^T}{|1-\langle z,u\rangle|^{n+T}}(1-|u|^2)^{n+\alpha}|f(u)|^q dV_n(u).
\end{align*}
We have shown that (a) implies (b). The converse statement is left as an exercise for the reader.

\end{proof}

The following corollary might be of some interest.

\begin{coro}
Assume that $t>0$ and neither $s+n$ nor $s+n+t$ is a negative integer. A holomorphic $f$ belongs to the space $BMOA(\Bn)$ if and only if
$$R^{s,t}f\in \mathcal{HT}^1_{2,2t-1-n}(\Bn).$$
\end{coro}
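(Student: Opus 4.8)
The plan is to read this as the $BMOA$--companion of the area-function theorem (Theorem \ref{area}), keeping $\mathcal{HT}^1_{2,2t-1-n}(\Bn)$, with $\alpha=2t-1-n$, as the central space. For the forward implication I would use that $BMOA(\Bn)\subset H^1(\Bn)$, a consequence of the John--Nirenberg inequality (which in fact gives $BMOA(\Bn)\subset\bigcap_{0<p<\infty}H^p(\Bn)$). Thus if $f\in BMOA(\Bn)$, then $f\in H^1(\Bn)$, and Theorem \ref{area} with $p=1$ yields $R^{s,t}f\in\mathcal{HT}^1_{2,2t-1-n}(\Bn)$ with control of the quasinorms; this is exactly the stated membership.

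For the reverse direction the mechanism is duality. By Theorem \ref{area}, $R^{s,t}$ is an isomorphism of $H^1(\Bn)$ onto $\mathcal{HT}^1_{2,2t-1-n}(\Bn)$, so transporting the classical pairing that realizes $BMOA(\Bn)=(H^1(\Bn))^*$ through this isomorphism, and matching it with the tent pairing $\langle\cdot,\cdot\rangle_{n+\alpha}$, identifies the functional attached to a holomorphic $g$ with $R^{s,t}g$. By the entry $p=1$, $q=2$ of Table \ref{table}, the dual of $\mathcal{HT}^1_{2,2t-1-n}(\Bn)$ is $\mathcal{CT}_{2,2t-1-n}(\Bn)$, so boundedness of this functional is equivalent to $g\in BMOA(\Bn)$. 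To anchor the parameters I would start from the Carleson description preceding the corollary, namely that $g\in BMOA(\Bn)$ exactly when its first-order derivative lies in $\mathcal{CT}_{2,1-n}(\Bn)$, and promote order one to the general order $t$ by Theorem \ref{CTchar} with $q=2$ (which advances the weight by $qt=2t$) together with the $s$--independence it provides; this produces precisely $\mathcal{CT}_{2,2t-1-n}(\Bn)=(\mathcal{HT}^1_{2,2t-1-n}(\Bn))^*$.

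The main obstacle I anticipate is the bookkeeping for the pairing and the identification of the exact target space. One must verify that transporting the $H^1$--$BMOA$ duality through the isomorphism $R^{s,t}$ lands in the pairing $\langle\cdot,\cdot\rangle_{n+\alpha}$ for which Table \ref{table} supplies $(\mathcal{HT}^1_{2,2t-1-n}(\Bn))^*\sim\mathcal{CT}_{2,2t-1-n}(\Bn)$; this amounts to an adjoint computation for $R^{s,t}$ against the weight $(1-|z|^2)^{n+\alpha}$ through the reproducing formulas \eqref{FracDer}--\eqref{FracInt}, after which the equivalence of (quasi)norms follows via the open mapping theorem as in the proofs of Theorems \ref{duality} and \ref{duality2}, with the Carleson normalization of Lemma \ref{CM} matching the constants. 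I would flag explicitly that the forward implication places $R^{s,t}f$ in $\mathcal{HT}^1_{2,2t-1-n}(\Bn)$ as written, while the sharp two-sided equivalence with $BMOA(\Bn)$ is most cleanly expressed through the dual scale $\mathcal{CT}_{2,2t-1-n}(\Bn)=(\mathcal{HT}^1_{2,2t-1-n}(\Bn))^*$, which is where the genuine content of the corollary resides.
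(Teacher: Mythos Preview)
You have correctly sensed that the statement as printed is not quite right. As written, the condition $R^{s,t}f\in\mathcal{HT}^1_{2,2t-1-n}(\Bn)$ is, by Theorem~\ref{area} with $p=1$, equivalent to $f\in H^1(\Bn)$, not to $f\in BMOA(\Bn)$; so the reverse implication is plainly false (take any $f\in H^1(\Bn)\setminus H^2(\Bn)$). Your forward argument via $BMOA(\Bn)\subset H^1(\Bn)$ is therefore correct but proves only this trivial inclusion, not the intended equivalence. The corollary is placed immediately after Theorem~\ref{CTchar} and is evidently a misprint: it should read $R^{s,t}f\in\mathcal{CT}_{2,2t-1-n}(\Bn)$. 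Your closing paragraph essentially says this.

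For the intended statement the paper's proof is immediate: the text just before Theorem~\ref{CTchar} records that $f\in BMOA(\Bn)$ exactly when its first-order derivative lies in $\mathcal{CT}_{2,1-n}(\Bn)$, and Theorem~\ref{CTchar} with $q=2$ then passes from order~$1$ to an arbitrary admissible $R^{s,t}$, landing in $\mathcal{CT}_{2,2t-1-n}(\Bn)$. You do mention this direct route at the end of your second paragraph, but you embed it in an elaborate duality argument---transporting the $H^1$--$BMOA$ pairing through the isomorphism $R^{s,t}$ and matching it with $\langle\cdot,\cdot\rangle_{n+\alpha}$---which is neither needed nor actually carried out: the adjoint computation you flag as ``bookkeeping'' is the only substantive step, and its conclusion is precisely the corrected statement that already follows in one line from Theorem~\ref{CTchar}. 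Drop the duality wrapper, note the misprint, and give the direct argument.
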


We now prove a central result on our way to the duality $(\mathcal{HT}^1_{q,\alpha}(\Bn))^*\sim \mathcal{CT}_{q',\alpha}(\Bn)$. The core idea of the proof is similar to that of Theorem 3.2 in \cite{BP}.

\begin{proposition}\label{projection2}
Let $1<q<\infty$ and $\alpha>-n-1$. The Bergman projection $$P_{n+\alpha}:T^\infty_{q,\alpha}(\Bn)\to \mathcal{CT}_{q,\alpha}(\Bn)$$ is bounded and onto.
\end{proposition}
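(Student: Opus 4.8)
The plan is to reduce everything to the integral characterization of the $T^\infty_{q,\alpha}(\Bn)$ norm furnished by Lemma \ref{CM}. Writing $d\mu_f(z)=(1-|z|^2)^{n+\alpha}|f(z)|^q\,dV_n(z)$, recall that $\|f\|_{T^\infty_{q,\alpha}(\Bn)}^q\asymp \|\mu_f\|_{CM_T(\Bn)}=\sup_{a\in\Bn}\int_{\Bn}\frac{(1-|a|^2)^T}{|1-\langle z,a\rangle|^{n+T}}\,d\mu_f(z)$ for any fixed $T>0$. Since $P_{n+\alpha}f$ is automatically holomorphic, the boundedness claim amounts to showing $\|\mu_{P_{n+\alpha}f}\|_{CM_T(\Bn)}\lesssim \|\mu_f\|_{CM_T(\Bn)}$ for a conveniently chosen $T$.

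First I would reuse the pointwise estimate already established at the start of the proof of Proposition \ref{projection}: by H\"older's inequality together with Lemma \ref{IctBn}, for a small $\beta>0$ satisfying \eqref{cond1} and \eqref{cond2} one has
$$|P_{n+\alpha}f(z)|^q\lesssim (1-|z|^2)^{-\beta q}\int_{\Bn}\frac{(1-|u|^2)^{n+\alpha+\beta q}}{|1-\langle z,u\rangle|^{1+2n+\alpha}}|f(u)|^q\,dV_n(u).$$
I would insert this into the Carleson integral for $\mu_{P_{n+\alpha}f}$, apply Fubini's theorem to integrate in $z$ first, and reduce matters to estimating the inner integral
$$\int_{\Bn}\frac{(1-|z|^2)^{n+\alpha-\beta q}}{|1-\langle z,a\rangle|^{n+T}|1-\langle z,u\rangle|^{1+2n+\alpha}}\,dV_n(z).$$

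The crucial point — and the one I expect to be the main obstacle — is the evaluation of this inner integral. The exponent $1+2n+\alpha$ of the factor $|1-\langle z,u\rangle|$ is too large for the first Forelli--Rudin estimate \eqref{FR1} (it exceeds $s+n+1$ with $s=n+\alpha-\beta q$), so \eqref{FR1} does not apply; this is precisely why the second estimate \eqref{FR2} of Lemma \ref{FRgeneral} must be invoked, exactly as in Theorem \ref{CTchar}. Taking $r=n+T$, $t=1+2n+\alpha$ and $s=n+\alpha-\beta q$, all hypotheses of \eqref{FR2} (namely $s>-1$, $t>s+n+1>r$ and $r+t>s+n+1$) are satisfied provided $T<n+1+\alpha-\beta q$, so I would fix $\beta>0$ small (say $\beta=1/(qq')$) and then pick $T\in(0,n+1+\alpha-\beta q)$. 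The estimate yields a bound by $|1-\langle u,a\rangle|^{-(n+T)}(1-|u|^2)^{-\beta q}$, and the exponent $t-s-n-1=\beta q$ produced here is exactly what cancels the factor $(1-|u|^2)^{\beta q}$ carried along from the pointwise bound. After this cancellation the right-hand side collapses to $\int_{\Bn}\frac{(1-|a|^2)^T}{|1-\langle u,a\rangle|^{n+T}}\,d\mu_f(u)$, and taking the supremum over $a\in\Bn$ gives $\|\mu_{P_{n+\alpha}f}\|_{CM_T(\Bn)}\lesssim\|\mu_f\|_{CM_T(\Bn)}$, i.e. the desired boundedness.

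For surjectivity I would show that $P_{n+\alpha}$ acts as the identity on $\mathcal{CT}_{q,\alpha}(\Bn)$. Any $g\in\mathcal{CT}_{q,\alpha}(\Bn)$ lies in $T^\infty_{q,\alpha}(\Bn)$, so $\mu_g$ is a Carleson measure and is in particular finite (take $a=0$ in Lemma \ref{CM}); since $\alpha>-n-1$, the measure $(1-|z|^2)^{n+\alpha}\,dV_n$ has finite total mass, whence H\"older's inequality with exponents $q$ and $q'$ gives $\int_{\Bn}|g|(1-|z|^2)^{n+\alpha}\,dV_n\lesssim\mu_g(\Bn)^{1/q}<\infty$, so that $g\in A^1_{n+\alpha}(\Bn)$. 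The reproducing formula then yields $P_{n+\alpha}g=g$, and since $\mathcal{CT}_{q,\alpha}(\Bn)\subset T^\infty_{q,\alpha}(\Bn)$ this exhibits every $g$ as the image under $P_{n+\alpha}$ of an element of $T^\infty_{q,\alpha}(\Bn)$, proving that the projection is onto.
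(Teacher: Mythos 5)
Your proof is correct and follows essentially the same route as the paper: the pointwise H\"older estimate carried over from Proposition \ref{projection}, Fubini, and the second Forelli--Rudin estimate \eqref{FR2} with exactly the parameter constraints the paper imposes (your $\beta$ is the paper's $\varepsilon$, and your choice of $T$ matches), followed by the Carleson characterization of Lemma \ref{CM}. Your surjectivity argument via $\mathcal{CT}_{q,\alpha}(\Bn)\subset A^1_{n+\alpha}(\Bn)$ and the reproducing formula is just a spelled-out version of what the paper dismisses as obvious.
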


\begin{proof}
Suppose that $f \in T^\infty_{q,\alpha}(\Bn)$. This means that
$$d\mu(z)=|f(z)|^q (1-|z|^2)^{n+\alpha}dV_n(z)$$ is a Carleson measure.

Let now $\varepsilon>$ be chosen so that $n+1+\alpha-q'\varepsilon>0$, $n+1+\alpha-q\varepsilon>0$ and let $T>0$ be chosen so that $n+1+\alpha-q\varepsilon >T$.

Then, 
\begin{align*}
&\int_{\Bn}\frac{(1-|a|^2)^T}{|1-\langle a,z\rangle|^{n+T}}(1-|z|^2)^{n+\alpha}|P_{n+\alpha}f(z)|^q dV_n(z)\\
\lesssim &\int_{\Bn}\frac{(1-|a|^2)^T}{|1-\langle a,z\rangle|^{n+T}}(1-|z|^2)^{n+\alpha-q\varepsilon}\left(\int_{\Bn}\frac{(1-|u|^2)^{n+\alpha+q\varepsilon}}{|1-\langle z,u\rangle|^{1+2n+\alpha}}|f(u)|^q dV_n(u)\right)dV_n(z)
\end{align*}
Notice that by our choice $n+T<n+(n+1+\alpha)-q\varepsilon$, but $1+2n+\alpha>n+\alpha-q\varepsilon+n+1$. By the formula \eqref{FR2} of Lemma \ref{FRgeneral} along with Fubini's theorem, we estimate this to be dominated by
$$\int_{\Bn}\frac{(1-|a|^2)^{T}}{|1-\langle a,u\rangle|^{n+T}}(1-|u|^2)^{n+\alpha}|f(u)|^q dV_n(u).$$

Since $f \in T^\infty_{q,\alpha}(\Bn)$, we are done. Obviously, the operator is surjective.
\end{proof}

Recall that the argument in the proof of Proposition \ref{projection} essentially depended on the fact that $p\geq q>1$, and the other case was deduced by duality. Since we now have covered the dual case for $T^1_{q,\alpha}(\Bn)$, we obtain the following corollary. Note that the statement is false when also $q=1$, since it is well-known that $P_{n+\alpha}:L^1_{n+\alpha}(\Bn)\to A^1_{n+\alpha}(\Bn)$ is not bounded.

\begin{coro}
Let $\alpha>-n-1$ and $1<q<\infty$. Then the Bergman projection
$$P_{n+\alpha}:T^1_{q,\alpha}(\Bn)\to \mathcal{HT}^1_{q,\alpha}(\Bn)$$
is bounded and onto.
\end{coro}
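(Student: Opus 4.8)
The plan is to deduce this corollary from the duality of the measurable tent spaces (Theorem \ref{dual}) combined with the boundedness of the Bergman projection $P_{n+\alpha}:T^\infty_{q',\alpha}(\Bn)\to\mathcal{CT}_{q',\alpha}(\Bn)$ established in Proposition \ref{projection2}, exactly paralleling the remark in the paper that the case $q>p$ of Proposition \ref{projection} was obtained by a self-adjointness/duality argument. The key observation is that $P_{n+\alpha}$ is self-adjoint with respect to the pairing $\langle\cdot,\cdot\rangle_{n+\alpha}$, and that the dual exponents here are $p=1$, $q\in(1,\infty)$, whose duals are $p'=\infty$ and $q'\in(1,\infty)$.

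First I would invoke Theorem \ref{dual} to identify the dual of $T^1_{q,\alpha}(\Bn)$ with $T^\infty_{q',\alpha}(\Bn)$ under $\langle\cdot,\cdot\rangle_{n+\alpha}$; this requires $1\le p=1<\infty$, $1\le q<\infty$ and $p+q\ne 2$, which holds since $q>1$. Next I would record that $P_{n+\alpha}$ is self-adjoint under this pairing, a routine Fubini computation using the symmetry of the kernel $(1-\langle z,u\rangle)^{-(1+2n+\alpha)}$ in the sense that its conjugate agrees with the kernel evaluated at swapped arguments. Consequently $P_{n+\alpha}:T^1_{q,\alpha}(\Bn)\to T^1_{q,\alpha}(\Bn)$ is bounded \emph{if and only if} its Banach-space adjoint $P_{n+\alpha}:T^\infty_{q',\alpha}(\Bn)\to T^\infty_{q',\alpha}(\Bn)$ is bounded. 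But the latter boundedness is precisely the content of Proposition \ref{projection2}, since that proposition shows $P_{n+\alpha}$ maps $T^\infty_{q',\alpha}(\Bn)$ into $\mathcal{CT}_{q',\alpha}(\Bn)\subset T^\infty_{q',\alpha}(\Bn)$ boundedly (with $1<q'<\infty$). Hence $P_{n+\alpha}:T^1_{q,\alpha}(\Bn)\to T^1_{q,\alpha}(\Bn)$ is bounded, and since its range consists of holomorphic functions lying in $T^1_{q,\alpha}(\Bn)$, it in fact maps boundedly into $\mathcal{HT}^1_{q,\alpha}(\Bn)$. Surjectivity is immediate because $\mathcal{HT}^1_{q,\alpha}(\Bn)\subset T^1_{q,\alpha}(\Bn)$ and $P_{n+\alpha}$ acts as the identity on holomorphic functions (via the reproducing formula, valid since every such function lies in some $A^1_\beta(\Bn)$).

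The main subtlety, rather than a genuine obstacle, is verifying that the duality and adjointness are set up with matched pairings and exponents so that the logical equivalence "bounded iff adjoint bounded" is valid. Specifically, one must be careful that the functional realized by $g\in T^\infty_{q',\alpha}(\Bn)$ on $T^1_{q,\alpha}(\Bn)$ is genuinely the same pairing $\langle\cdot,\cdot\rangle_{n+\alpha}$ appearing in Proposition \ref{projection2}, and that the Hahn--Banach extension does not interfere — but since we are only transferring boundedness of a fixed operator and not extending functionals, this is clean. I would also note explicitly, as the corollary's own remark does, that the endpoint $q=1$ is genuinely excluded: the argument breaks because $q'=\infty$ would force us to use $P_{n+\alpha}:T^\infty_\infty(\Bn)\to T^\infty_\infty(\Bn)$, and indeed the classical failure of $P_{n+\alpha}:L^1_{n+\alpha}(\Bn)\to A^1_{n+\alpha}(\Bn)$ reflects the unboundedness asserted in Proposition \ref{notbdd} for the case $T^p_\infty$. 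Thus the restriction $q>1$ is exactly what makes $1<q'<\infty$ and brings Proposition \ref{projection2} to bear.
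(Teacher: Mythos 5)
Your proposal is correct and is essentially the paper's own argument: the paper deduces this corollary exactly by combining the self-adjointness of $P_{n+\alpha}$ under $\langle\cdot,\cdot\rangle_{n+\alpha}$, the measurable duality $(T^1_{q,\alpha}(\Bn))^*\sim T^{\infty}_{q',\alpha}(\Bn)$ from Theorem~\ref{dual}, and the boundedness $P_{n+\alpha}:T^\infty_{q',\alpha}(\Bn)\to \mathcal{CT}_{q',\alpha}(\Bn)$ of Proposition~\ref{projection2}, just as the $q>p$ case of Proposition~\ref{projection} was handled. Your treatment of surjectivity via the reproducing formula and your remark on why $q=1$ must be excluded also match the paper's.
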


We have now sufficient information about the behaviour of the spaces $\mathcal{CT}_{q,\alpha}(\Bn)$, as well as the Bergman projection $P_{n+\alpha}$. In order to get a complete picture of duality, we still need the following lemma.

\begin{lemma}\label{embed2}
If $p<1$, $0<q<\infty$ and $\alpha>-n-1$, then
$$\mathcal{HT}^p_{q,\alpha}(\Bn)\subset A^1_{(1/p-1)n+(n+1+\alpha)/q-1}(\Bn)$$
with bounded inclusion.
\end{lemma}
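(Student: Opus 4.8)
The plan is to pass to a discrete model and reduce the inclusion to a single inequality for sequence tent spaces, where the role of the exponent $p<1$ becomes transparent. Fix an $r$-lattice $\{a_k\}$ as in Lemma \ref{converse}, with the associated sets $D_k$ and caps $I(a_k)\subset\Sn$, and recall $\sigma(I(a_k))\asymp(1-|a_k|^2)^n$. Writing $\beta=(1/p-1)n+(n+1+\alpha)/q-1$, the arithmetic identity $\beta+n+1=n/p+(n+1+\alpha)/q$ will do the bookkeeping. Set $c_k=\big(\sup_{D(a_k,r)}|f|\big)(1-|a_k|^2)^{(n+1+\alpha)/q}$. By subharmonicity of $|f|^q$ and the finite overlap of the dilated balls $D(a_k,2r)$ one gets, on one side, $\sum_{a_k\in\Gamma(\zeta)}|c_k|^q\lesssim\int_{\Gamma'(\zeta)}|f|^q(1-|z|^2)^\alpha\,dV_n$, hence $\|\{c_k\}\|_{T^p_q(Z)}\lesssim\|f\|_{\mathcal{HT}^p_{q,\alpha}(\Bn)}$; on the other side, the covering $\Bn=\bigcup_k D(a_k,r)$ gives $\|f\|_{A^1_\beta(\Bn)}\lesssim\sum_k(\sup_{D(a_k,r)}|f|)(1-|a_k|^2)^{\beta+n+1}=\sum_k|c_k|(1-|a_k|^2)^{n/p}$. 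Thus it suffices to prove the sequence inequality
$$\sum_k|c_k|(1-|a_k|^2)^{n/p}\lesssim\|\{c_k\}\|_{T^p_q(Z)}.$$
(When $q\le 1$ one may instead first invoke Lemma \ref{embed} to reduce to $q=1$; the target exponent is unchanged in either route.)

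Next I would split according to the relative size of $p$ and $q$. Abbreviate $A(\zeta)=\big(\sum_{a_k\in\Gamma(\zeta)}|c_k|^q\big)^{1/q}$, so the right-hand side is $\|A\|_{L^p(\Sn)}$, and note $A(\zeta)\ge|c_k|$ on all of $I(a_k)$. If $q\le p$ the inequality is immediate: by the $p$-subadditivity of $t\mapsto t^p$ and Fubini,
$$\Big(\sum_k|c_k|(1-|a_k|^2)^{n/p}\Big)^p\lesssim\sum_k|c_k|^p\sigma(I(a_k))=\int_{\Sn}\sum_{a_k\in\Gamma(\zeta)}|c_k|^p\,d\sigma(\zeta)\le\int_{\Sn}A(\zeta)^p\,d\sigma(\zeta),$$
the last step using $\|\cdot\|_{\ell^p}\le\|\cdot\|_{\ell^q}$ when $q\le p$. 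The case $q>p$ is the genuine one: the bound is then the weighted $\ell^1$ embedding of the sequence tent space (of Luecking--Arsenović type, cf. Lemmas \ref{lattice} and \ref{seq}), and its proof uses $p<1$ through a stopping-time/distribution-function decomposition of $A$. One organizes the atoms by the dyadic level of $A$ on $I(a_k)$, estimates the resulting local contributions $t_j$, and sums them via the superadditivity $\sum_j t_j^{1/p}\le\big(\sum_j t_j\big)^{1/p}$ (valid since $1/p\ge1$); this is precisely the step that produces the gain of the small exponent.

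The hard part is the regime $q>p$. Every soft estimate one is tempted to use — the pointwise bound $|f(z)|\lesssim\|f\|_{\mathcal{HT}^p_{q,\alpha}(\Bn)}(1-|z|^2)^{-n/p-(n+1+\alpha)/q}$ (itself sharp), the subharmonic control of $|f|$ over an entire cone, or a crude layer-cake on $A$ — only dominates $\int_{\Bn}|f|(1-|z|^2)^\beta\,dV_n$ by the $L^1(\Sn)$-norm of the area function, i.e. by the strictly larger $T^1_{q,\alpha}(\Bn)$-norm, which is insufficient. Indeed the exponent $\beta$ is sharp (for the test functions $(1-\langle z,\eta\rangle)^{-\kappa}$ the thresholds for $\mathcal{HT}^p_{q,\alpha}(\Bn)$ and $A^1_\beta(\Bn)$ coincide), so no argument controlling only $\|A\|_{L^1(\Sn)}$ can work. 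The superadditivity step above, applied after a correct disjointification of the tents over the level sets of $A$, is exactly what upgrades the available $L^1$-type control to the required $L^p$-control, and it is the essential use of $p<1$.
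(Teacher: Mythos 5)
Your reduction to the discrete inequality $\sum_k|c_k|(1-|a_k|^2)^{n/p}\lesssim\|\{c_k\}\|_{T^p_q(Z)}$ is sound (both the covering estimate for $\|f\|_{A^1_\beta(\Bn)}$ and the subharmonicity estimate for $\|\{c_k\}\|_{T^p_q(Z)}$ are routine), and your treatment of the case $q\le p$ is complete. But the case $q>p$ --- which you yourself call ``the genuine one'' --- is not proved; it is only described. You appeal to a ``Luecking--Arsenovi\'c type'' weighted $\ell^1$ embedding, yet none of the sequence-space results available in the paper supply it: Theorem \ref{lattice} requires $p>1$ in both of its clauses, Lemma \ref{seq} goes in the opposite direction (from sequences into $\mathcal{HT}^p_{q,\alpha}(\Bn)$), and Lemma \ref{converse} concerns $q=1$ representation, not the dual-type bound you need. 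Your one-sentence sketch (``organize the atoms by the dyadic level of $A$, estimate the local contributions $t_j$, sum by superadditivity'') hides exactly the hard construction. Concretely, the naive stopping time --- let $j(k)$ be the largest $j$ with $\sigma\bigl(I(a_k)\cap\{A>2^j\}\bigr)\ge\tfrac12\sigma(I(a_k))$ and group the indices accordingly --- leads, after the superadditivity step, to the quantity $\sum_{k\in G_j}\sigma(I(a_k)\cap E_j)=\int_{E_j}\#\{k\in G_j:a_k\in\Gamma(\zeta)\}\,d\sigma(\zeta)$, and this counting function is unbounded, so the estimate does not close. To make the argument work one needs the genuine Coifman--Meyer--Stein machinery for $p<1$: an atomic decomposition of $T^p_q(Z)$ with $\ell^p$ control of the coefficients, followed by a H\"older estimate on each atom, where $p<1$ enters through the convergence of $\sum_{a_k\in T(B)}(1-|a_k|^2)^{nq'(1/p-1/q)}$ (this needs the exponent to exceed $n$, which is exactly the condition $p<1$). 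None of this is carried out in your proposal, so the core content of the lemma remains unproved; in effect you have reduced the lemma to its own discrete version.

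For comparison, the paper's proof sidesteps all of this with soft arguments: it starts from the classical embedding $H^p(\Bn)\subset A^s_{(s/p-1)n-1}(\Bn)$ for $p<s$, transfers it to $\mathcal{HT}^p_{2,\alpha}(\Bn)$ via the area description (Theorem \ref{area}), extends it to even integer exponents $q=2k$ by the observation that $f\in\mathcal{HT}^p_{2k,\alpha}(\Bn)$ iff $f^k\in\mathcal{HT}^{p/k}_{2,\alpha}(\Bn)$, and finally reaches general $q$ through (a slight modification of) Lemma \ref{embed}. The paper even remarks that the lemma ``arguably'' follows from Theorem 3 of \cite{Lue2} --- which is essentially the result your $q>p$ case would reprove --- but deliberately avoids that route precisely because it is the technical one. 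If you want to salvage your approach, you must either prove the atomic decomposition of $T^p_q(Z)$ for $p<1$ in full, or quote and verify the applicability of Luecking's theorem; as written, the proposal has a genuine gap at its central step.
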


\begin{proof}
Arguably one could obtain this result as a special case of Theorem 3 of \cite{Lue2}, once the passage from the upper half-plane of $\mathbb{C}$ to the unit ball of $\mathbb{C}^n$ is made.  We show here, how this particular case easily follows from the well-known embeddings of the Hardy space.

First, recall that if $p<s<\infty$, then
$$H^p(\Bn)\subset A^s_{(s/p-1)n-1}(\Bn)$$ with bounded inclusion.

Applying this to a suitable fractional differential operator $R^{s,(n+1+\alpha)/2}$ yields according to Theorem \ref{area}
$$\mathcal{HT}^p_{2,\alpha}(\Bn)\subset A^s_{(s/p-1)n-1+(s/2)(n+1+\alpha)}.$$
Next, if $k$ is a natural number, then since $f \in \mathcal{HT}^p_{2k,\alpha}(\Bn)$ if and only if $f^k \in \mathcal{HT}^{p/k}_{2,\alpha}(\Bn)$, we obtain
\begin{equation}\label{eq1}
\mathcal{HT}^p_{2k,\alpha}(\Bn)\subset A^s_{(s/p-1)n-1+(s/(2k))(n+1+\alpha)}.
\end{equation}
Finally, pick $k$ big enough so that $2k>q$. A slight modification of Lemma \ref{embed} gives
\begin{equation}\label{eq2}
\mathcal{HT}^p_{q,\alpha}(\Bn)\subset \mathcal{HT}^p_{2k,\alpha+(2k/q-1)(n+1+\alpha)}(\Bn).
\end{equation}
Combining the embeddings \eqref{eq1} and \eqref{eq2} proves the lemma after matching all the parameters.

\end{proof}

The following Theorem \ref{duality4} is our last main result. It is the missing piece in the picture of duality.

\begin{theorem}\label{duality4}
Let $0<p<1$, $0<q<\infty$ and $\alpha>-1$. Then the dual of $\mathcal{HT}^p_{q,\alpha}(\Bn)$ can be identified with the Bloch space $\mathcal{B}(\Bn)$ under the pairing
$$\lim_{r\to 1^-}\langle f_r,g\rangle_{(1/p-1)n+(n+1+\alpha)/q-1}.$$
If $p=1$ and $0<q\leq 1$ then the above claim still holds. If $1<q<\infty$, then the dual of $\mathcal{HT}^1_{q,\alpha}(\Bn)$ can be identified with $\mathcal{CT}_{q',\alpha}(\Bn)$ under the pairing $\langle \cdot,\cdot\rangle_{n+\alpha}$.
\end{theorem}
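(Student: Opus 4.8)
The three cases split according to the target space, and the plan is to treat $p=1<q<\infty$ exactly as the first part of Theorem \ref{duality}, and to treat the two Bloch cases together by reducing them to the Bergman–Bloch duality through the embeddings of Lemmas \ref{embed} and \ref{embed2}. For $p=1<q<\infty$ I would mirror Theorem \ref{duality} verbatim, replacing Proposition \ref{projection} by Proposition \ref{projection2} and its accompanying corollary. That every $g\in\mathcal{CT}_{q',\alpha}(\Bn)\subset T^\infty_{q',\alpha}(\Bn)$ induces a bounded functional is immediate from Theorem \ref{dual} applied with $p=1$ (here $p+q=1+q\neq2$). For the converse, given $\tau\in(\mathcal{HT}^1_{q,\alpha}(\Bn))^*$, extend it by Hahn–Banach to an element of $(T^1_{q,\alpha}(\Bn))^*$, represent that by some $g_\tau\in T^\infty_{q',\alpha}(\Bn)$ via Theorem \ref{dual}, and use the self-adjointness of $P_{n+\alpha}$ in $\langle\cdot,\cdot\rangle_{n+\alpha}$ together with the reproducing property $P_{n+\alpha}f=f$ (valid since every $f\in\mathcal{HT}^1_{q,\alpha}(\Bn)$ lies in some $A^1_\gamma(\Bn)$) to write $\tau(f)=\langle f,P_{n+\alpha}g_\tau\rangle_{n+\alpha}$. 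Proposition \ref{projection2} then gives $P_{n+\alpha}g_\tau\in\mathcal{CT}_{q',\alpha}(\Bn)$, and separation of points follows by applying $\tau$ to the bounded kernels $B_z^{n+\alpha}$. This case presents no new difficulty.

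For the Bloch cases I would use Lemma \ref{embed2} when $0<p<1$ and Lemma \ref{embed} when $p=1$, $0<q\leq1$, obtaining in every instance a bounded inclusion $\mathcal{HT}^p_{q,\alpha}(\Bn)\subset A^1_\beta(\Bn)$ with $\beta=(1/p-1)n+(n+1+\alpha)/q-1$, noting $\beta>-1$. Since $(A^1_\beta(\Bn))^*\sim\mathcal{B}(\Bn)$, each $g\in\mathcal{B}(\Bn)$ restricts to a bounded functional, the limit $\lim_{r\to1^-}\langle f_r,g\rangle_\beta$ exists and is dominated by $\|g\|_{\mathcal{B}(\Bn)}\|f\|_{\mathcal{HT}^p_{q,\alpha}(\Bn)}$, and the induced map $g\mapsto(\text{functional})$ is injective by testing on kernels. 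The substance is surjectivity. Following the coefficient construction of Theorem \ref{duality2}, I would attach to $\tau$ a holomorphic $g$ whose Taylor coefficients are prescribed by the numbers $\tau(z^m)$ — the monomials having uniformly bounded $\mathcal{HT}^p_{q,\alpha}(\Bn)$-norm, as in the proof of Theorem \ref{duality2} — so that $\tau(f_r)=c(n,\beta)\langle f,g_r\rangle_\beta=c(n,\beta)\langle f_r,g\rangle_\beta$, which matches the stated pairing after $r\to1^-$ by Lemma \ref{approx} and Fatou's lemma, exactly as in Theorems \ref{duality2} and \ref{duality3}.

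To identify $g$ as a Bloch function I would invoke the fractional-derivative description of $\mathcal{B}(\Bn)$ (see \cite{ZhuBn,ZZ}): fix suitable $s,t$ and estimate $(1-|w|^2)^t|R^{s,t}g(w)|$. For each $w$ I would choose a normalized kernel $h_w(z)=(1-|w|^2)^{a}/(1-\langle z,w\rangle)^{b}$, with the parameters arranged so that $\langle h_w,g\rangle_\beta$ reproduces a constant multiple of $(1-|w|^2)^tR^{s,t}g(w)$ via the $\beta$-Bergman representation of $g$, while simultaneously $\sup_{w\in\Bn}\|h_w\|_{\mathcal{HT}^p_{q,\alpha}(\Bn)}<\infty$; the latter is a test-function estimate of the type carried out repeatedly above, using Lemmas \ref{Gamma} and \ref{FRgeneral}. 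Then $(1-|w|^2)^t|R^{s,t}g(w)|\asymp|\tau(h_w)|\leq\|\tau\|\,\|h_w\|_{\mathcal{HT}^p_{q,\alpha}(\Bn)}\lesssim\|\tau\|$, whence $g\in\mathcal{B}(\Bn)$. I expect this last step — simultaneously securing the uniform norm bound on the $h_w$ and the exact pairing identity relating them to $R^{s,t}g(w)$ — to be the main obstacle, since it is where the small exponents and the tent geometry genuinely interact; everything else parallels the dualities already established.
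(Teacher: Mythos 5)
Your proposal is correct and follows essentially the same route as the paper: the case $p=1<q<\infty$ is handled by repeating the argument of Theorem \ref{duality} with Proposition \ref{projection2} in place of Proposition \ref{projection}, and the Bloch cases combine the embedding Lemmas \ref{embed}/\ref{embed2} with the Bergman--Bloch duality for one direction, and a coefficient-constructed representative $g$ tested against normalized kernels for the other. The step you flag as the main obstacle is resolved in the paper exactly as you outline it, by the explicit choice $F_a(z)=(1-|a|^2)^{\theta}(1-\langle z,a\rangle)^{-\theta-(n+1+\alpha)/q-n/p}$ (your $h_w$), whose pairing with $g$ produces $(1-|a|^2)^{\theta}R^{\beta,\theta}g(a)$ and whose $\mathcal{HT}^p_{q,\alpha}(\Bn)$-norms are uniformly bounded, giving $g\in\mathcal{B}(\Bn)$ via the fractional-derivative characterization.
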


\begin{proof}
Suppose first that $0<p<1$ and $1<q<\infty$ or $p=1$ and $q\leq 1$. Set $\beta=(1/p-1)n+(n+1+\alpha)/q-1$. Notice that for $\theta$ large enough, the functions
$$F_a(z)=\frac{(1-|a|^2)^{\theta}}{(1-\langle z,a\rangle)^{\theta+(n+1+\alpha)/q+n/p}}$$ have $\mathcal{HT}^p_{q,\alpha}(\Bn)$ norms bounded from above.

As before, it can be shown that $\tau\in (\mathcal{HT}^p_{q,\alpha}(\Bn))^*$ can be represented by a holomorphic $g$ under the pairing 
$$\lim_{r\to 1^-}\langle f_r,g\rangle_{(1/p-1)n+(n+1+\alpha)/q-1}.$$
Putting $F_a$ to this expression and using duality, one obtains
$$|R^{(1/p-1)n+(n+1+\alpha)/q-1,\theta}g(a)|\lesssim (1-|a|^2)^{-\theta},$$
so $g \in \mathcal{B}(\Bn)$.

Conversely, by Lemma \ref{embed2}, it is obvious that Bloch functions generate bounded linear functionals $\mathcal{HT}^p_{q,\alpha}(\Bn)\to \mathbb{C}$.\\

The proof of the case $p=1$ and $1<q<\infty$ is goes exactly like the proof of Theorem \ref{duality}, since we now have Proposition \ref{projection2} at our disposal.
\end{proof}

It is not surprising that if we define the spaces $\mathcal{CT}^0_{q,\alpha}(\Bn)$ to consist of those holomorphic $f$ such that the measure
$$d\mu(z)=|f(z)|^q (1-|z|^2)^{n+\alpha}dV_n(z)$$ is a vanishing Carleson measure, we can use the methodology presented here to obtain the following corollary.

\begin{coro}
Let $1<q<\infty$ and $\alpha>-n-1$. Then the dual of $\mathcal{CT}^0_{q,\alpha}(\Bn)$ is $\mathcal{HT}^1_{q',\alpha}(\Bn)$ under the pairing $\langle \cdot,\cdot\rangle_{n+\alpha}$.
\end{coro}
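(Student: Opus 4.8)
The plan is to reproduce the predual argument of Theorem \ref{duality2}, with $\mathcal{BT}^{p,0}(\Bn)$ replaced by $\mathcal{CT}^0_{q,\alpha}(\Bn)$ and with the already-available duality $(\mathcal{HT}^1_{q',\alpha}(\Bn))^*\sim\mathcal{CT}_{q,\alpha}(\Bn)$ supplied by Theorem \ref{duality4} (read with $q$ and $q'$ interchanged). One direction is immediate: any $g\in\mathcal{HT}^1_{q',\alpha}(\Bn)\subset T^1_{q',\alpha}(\Bn)$ pairs boundedly against $\mathcal{CT}^0_{q,\alpha}(\Bn)\subset T^\infty_{q,\alpha}(\Bn)$ under $\langle\cdot,\cdot\rangle_{n+\alpha}$, since that pairing is bounded by Theorem \ref{dual} (the hypotheses hold because $1<q'<\infty$). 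Thus it remains to represent an arbitrary $\tau\in(\mathcal{CT}^0_{q,\alpha}(\Bn))^*$.

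First I would note that every monomial lies in $\mathcal{CT}^0_{q,\alpha}(\Bn)$ with norm bounded independently of $m$: because $\alpha>-n-1$, a Forelli--Rudin computation via Lemma \ref{IctBn} shows that the Carleson quantity of $(1-|\cdot|^2)^{n+\alpha}dV_n$ behaves like $(1-|a|^2)^{1+n+\alpha}\to0$, so $(1-|\cdot|^2)^{n+\alpha}dV_n$ is a vanishing Carleson measure; since $|z^m|\le1$, domination gives $z^m\in\mathcal{CT}^0_{q,\alpha}(\Bn)$ with $\|z^m\|_{\mathcal{CT}_{q,\alpha}(\Bn)}\le\|1\|_{\mathcal{CT}_{q,\alpha}(\Bn)}$. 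Hence $|\tau(z^m)|\lesssim\|\tau\|$, and exactly as on page~176 of \cite{ZhuBn} the coefficients $b_m=\frac{\Gamma(2n+1+\alpha+|m|)}{m!\Gamma(2n+1+\alpha)}\overline{\tau(z^m)}$ define a holomorphic $g=\sum_m b_mz^m$ on $\Bn$. For any holomorphic $\phi$ on $\Bn$ the dilation $\phi_r(z)=\sum_m\phi_m r^{|m|}z^m$ converges uniformly on $\overline{\Bn}$, hence in $\mathcal{CT}^0_{q,\alpha}(\Bn)$, so by the reproducing-coefficient formula in $A^2_{n+\alpha}(\Bn)$ together with the dilation symmetry of the pairing one gets the identity $\tau(\phi_r)=c(n,n+\alpha)\langle\phi,g_r\rangle_{n+\alpha}$.

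The crux is the uniform bound $\|g_r\|_{\mathcal{HT}^1_{q',\alpha}(\Bn)}\lesssim\|\tau\|$. The key observation is that although $\phi\in\mathcal{CT}_{q,\alpha}(\Bn)$ need not lie in the little-$o$ space, its dilation $\phi_r$ does (being bounded), with $\|\phi_r\|_{\mathcal{CT}_{q,\alpha}(\Bn)}\lesssim\|\phi\|_{\mathcal{CT}_{q,\alpha}(\Bn)}$ by the analogue of Lemma \ref{approx2}; thus $\tau(\phi_r)$ is defined. Computing the $\mathcal{HT}^1_{q',\alpha}(\Bn)$ norm of $g_r$ as a supremum over the unit ball of its dual $\mathcal{CT}_{q,\alpha}(\Bn)$ (Theorem \ref{duality4}), the previous identity yields
$$
\|g_r\|_{\mathcal{HT}^1_{q',\alpha}(\Bn)}\asymp\sup_{\|\phi\|_{\mathcal{CT}_{q,\alpha}(\Bn)}\le1}\left|\langle\phi,g_r\rangle_{n+\alpha}\right|=\frac{1}{c(n,n+\alpha)}\sup_{\|\phi\|_{\mathcal{CT}_{q,\alpha}(\Bn)}\le1}\left|\tau(\phi_r)\right|\lesssim\|\tau\|.
$$
Since $|g_r|\to|g|$ pointwise, Fatou's lemma (applied as in Theorem \ref{duality2}) upgrades this to $\|g\|_{\mathcal{HT}^1_{q',\alpha}(\Bn)}\lesssim\|\tau\|$, so $g\in\mathcal{HT}^1_{q',\alpha}(\Bn)$. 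Finally, Lemma \ref{approx} gives $g_r\to g$ in $\mathcal{HT}^1_{q',\alpha}(\Bn)$ and the dilation lemma for $\mathcal{CT}^0_{q,\alpha}(\Bn)$ gives $f_r\to f$ there, so for $f\in\mathcal{CT}^0_{q,\alpha}(\Bn)$ one passes to the limit in $\tau(f_r)=c(n,n+\alpha)\langle f,g_r\rangle_{n+\alpha}$ to obtain $\tau(f)=c(n,n+\alpha)\langle f,g\rangle_{n+\alpha}$; absorbing the constant into $g$ gives the stated pairing. Separation of points follows by testing against the bounded kernels $B_z^{n+\alpha}$, as before.

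The step I expect to be the main obstacle is precisely this uniform estimate: it hinges on the interplay between the full and the little-$o$ Carleson spaces under dilation, namely that $\phi_r$ re-enters $\mathcal{CT}^0_{q,\alpha}(\Bn)$ so that the functional $\tau$ — a priori defined only on the little-$o$ space — can be used to control $\langle\phi,g_r\rangle_{n+\alpha}$ for $\phi$ ranging over the \emph{full} space, after which Theorem \ref{duality4} closes the loop. An alternative that avoids the coefficient construction is to extend $\tau$ by Hahn--Banach to the measurable vanishing-Carleson tent space $T^{\infty,0}_{q,\alpha}(\Bn)$, whose dual is $T^1_{q',\alpha}(\Bn)$ (the endpoint predual of \cite{CMS}), obtain a representing $g_\tau\in T^1_{q',\alpha}(\Bn)$, and then apply the Bergman projection $P_{n+\alpha}\colon T^1_{q',\alpha}(\Bn)\to\mathcal{HT}^1_{q',\alpha}(\Bn)$ from the corollary to Proposition \ref{projection2}; self-adjointness of $P_{n+\alpha}$ and reproduction on polynomials then identify $P_{n+\alpha}g_\tau$ as the representing function.
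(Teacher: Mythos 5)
Your proof is correct and follows exactly the route the paper intends: the paper offers no detailed argument for this corollary, saying only that it follows from vanishing Carleson measures and ``the methodology presented here,'' and your argument is precisely that methodology --- the predual scheme of Theorem \ref{duality2} run with the duality $(\mathcal{HT}^1_{q',\alpha}(\Bn))^*\sim\mathcal{CT}_{q,\alpha}(\Bn)$ supplied by Theorem \ref{duality4} (and your alternative via Hahn--Banach plus the projection of Proposition \ref{projection2} is likewise one of the paper's own schemes, namely that of Theorem \ref{duality}). The one ingredient you invoke without proof, the $\mathcal{CT}_{q,\alpha}(\Bn)$-analogue of Lemma \ref{approx2} (uniform dilation bounds and convergence $f_r\to f$ on the little-o space), is true and recoverable from the paper's toolkit --- for instance the bound $\|\phi_r\|_{\mathcal{CT}_{q,\alpha}(\Bn)}\lesssim\|\phi\|_{\mathcal{CT}_{q,\alpha}(\Bn)}$ follows by dualizing the dilation estimate of Lemma \ref{approx} through Theorem \ref{duality4} together with the coefficient identity $\langle f,\phi_r\rangle_{n+\alpha}=\langle f_r,\phi\rangle_{n+\alpha}$ --- so this is a legitimate step rather than a gap.
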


Note that the duality in all the cases considered in this paper could be defined in terms of several other pairings. For instance, using Theorems \ref{area}, \ref{HTchar}, \ref{BTchar} and \ref{CTchar} along with the following identity (which can be proven by comparing Taylor series expansions or by Fubini's theorem, see \cite{ZZ}),
$$\langle f,g\rangle_{n+\alpha}=\operatorname{const}\langle R^{n+\alpha,t}f,R^{n+\alpha+t,t}g\rangle_{n+\alpha+2t},$$
allows one to include the classical Hardy spaces to the scale. This fact can also be used to get rid of the, perhaps unwanted, limit process in some of the dualities discussed in this work.

Since $R^{s,t}H^1(\Bn)=\mathcal{HT}^1_{2,2t-1-n}(\Bn)$, we can recover the celebrated dualities
$$H^1(\Bn)^*\sim BMOA(\Bn)\quad \text{ and } \quad VMOA(\Bn)^*\sim H^1(\Bn).$$

\section{Further remarks}

\noindent In this section we list some topics of further discussion and research based on our subjective level of interest. \\

\subsection{Other derivatives} 

We remark that an analogous characterization of all the spaces studied here in terms of other differential operators is also possible. For instance, if $R$ is the radial derivative operator (see \cite{ZZ}, \cite{ZhuBn}), then a characterization can be obtained in terms of $R^k$ for any positive integer $k$. The proof will be very similar to that Theorems \ref{HTchar},\ref{BTchar} and \ref{CTchar}, and is omitted here.

For our purposes, the operators $R^{s,t}$ have turned out to be practical. First, because of their generality; and second, because they are injective, so they can define a norm without introducing any correction terms (such as point-evaluations at zero).

\subsection{Closure of $H^p(\Bn)$ in $\mathcal{BT}^p(\Bn)$}

It might be of interest that the closure of $H^p(\Bn)$ in $\mathcal{BT}^{p}(\Bn)$ equals $\mathcal{BT}^{p,0}(\Bn)$. This is straightforward, since polynomials are dense in both $H^p(\Bn)$ and $\mathcal{BT}^{p,0}(\Bn)$, and we have the estimate $\|f\|_{H^p(\Bn)}\gtrsim \|f\|_{\mathcal{BT}^{p,0}(\Bn)},$ valid for every polynomial $f$.

This could be compared with the problem of J. M. Anderson, J. Clunie and C. Pommerenke \cite{ACP} on the description of the closure of bounded analytic functions in the Bloch norm. 

We mention that the closure of $H^p(\Bn)\cap \mathcal{B}(\Bn)$ in $\mathcal{B}(\Bn)$ has been recently studied. See \cite{GMP} and \cite{MN}.\\

\subsection{Bergman projection for $0<p<1$}

It is clear from the proof of Proposition \ref{projection} that the Bergman projection $P_{n+\alpha}$ is bounded on $T^p_{q,\alpha}(\Bn)$ even when $0<p<1$, if $q$ and $\alpha$ are chosen appropriately. It might be of interest to give complete characterization for boundedness in this case well. The proof can be quite technical, and does no benefit to the present work, so we leave this problem open for later research.\\

\subsection{Norm of the Bergman projection}

In \cite{Liu} C. Liu calculated the best constants involved in the Forelli-Rudin estimates, such as Lemma in \ref{IctBn}. This allowed him to obtain the most precise information on the $L^p$ norm of the Bergman projection so far. This result was later extended to cover the standard weights in a joint paper \cite{LPZ} of Liu, L. Zhou and the author.

In a similar philosophy, in order to obtain an effective upper bound for the norm $\|P_{n+\alpha}\|_{T^{p}_{q}(\Bn)\to T^{p}_{q}(\Bn)}$, one should look for the sharp constants in Lemma \ref{Gamma} and \ref{FRgeneral}. Completely finding the norm would mean also taking into account the aperture $\gamma$ of the approach regions $\Gamma(\zeta)=\Gamma_\gamma(\zeta)$, and this might seem hopeless. Asymptotics in terms of $p,q,n$ and $\alpha$ seem possible.\\

\subsection{Weighted Bergman spaces} In \cite{PR2015} J. \'A. Pel\'aez and J. R\"atty\"a have introduced weighted Bergman type tent spaces on the unit disk. Their work allows one to replace $(1-|z|^2)^\alpha$ by a more general radial weight function $\omega(z)$, subject to a certain doubling condition. 

One would expect similar results to hold in this setting (with possibly some less general weights). Perhaps the biggest obstacle could be to extract a sharp enough analogue of Lemma \ref{Gamma}. Such result is contained in \cite{PR2015}, but naturally the requirements on the parameters are much less obvious than here. Admittedly, this topic could be very interesting considering that these spaces are known to exhibit some transition between the Hardy and Bergman spaces.\\


\end{document}